\DeclareMathOperator*{\argmin}{arg\,min}
\numberwithin{equation}{section}
\def\r{\varrho}
\newcommand{\eqa}{\begin{eqnarray}}
\newcommand{\ena}{\end{eqnarray}}
\newcommand{\eq}{\begin{equation}}
\newcommand{\en}{\end{equation}}
\newcommand{\eqs}{\begin{eqnarray*}}
\newcommand{\ens}{\end{eqnarray*}}
\def\X{\mathbf{X}}
\def\r{\varrho}
\newcommand{\N}     {\mathbb{N}}
\def\1{{\mathchoice {1\mskip-4mu\mathrm l}      
{1\mskip-4mu\mathrm l} 
{1\mskip-4.5mu\mathrm l} {1\mskip-5mu\mathrm l}}} 
\newcommand{\ssup}[1] {{{\scriptscriptstyle{({#1}})}}} 
\def\comment#1{}
\def\ignore#1{}
\def\C{\mathcal{C}}
\def\bP{\mathbf{P}}
\newtheorem{theorem}{Theorem}
\newtheorem{proposition}[theorem]{Proposition}
\newtheorem{lemma}[theorem]{Lemma}
\newtheorem{remark}[theorem]{Remark}
\newtheorem{corollary}[theorem]{Corollary}
\newtheorem*{ack}{Acknowledgement}
\renewcommand{\epsilon}{\varepsilon}
\title[Once-excited random walk]{Phase transition for the once-excited random walk on general trees
 }
\date{}
\author[C.-B.~Huynh]{Cong Bang Huynh}
\address{Cong Bang Huynh\\ Univ. Grenoble Alpes, CNRS, Institut Fourier, F-38000 Grenoble, France}
\email{cong-bang.huynh@univ-grenoble-alpes.fr}
\keywords{Self-interacting random walks, excited random walk, cookie random walk, recurrence, transience, branching number, branching-ruin number}
\begin{document}

\begin{abstract}
The phase transition of $M$-digging random on a general tree was studied by Collevecchio, Huynh and Kious \cite{collevecchio2018branching}. In this paper, we study particularly the critical $M$-digging random walk on a superperiodic tree that is proved to be recurrent.\\ 
We keep using the techniques introduced by Collevecchio, Kious and Sidoravicius  \cite{collevecchio2017branching} with the aim of investigating the phase transition of Once-excited random walk on general trees.\\
In addition, we prove if $\mathcal T$ is a tree whose branching number is larger than $1$, any multi-excited random walk on $\mathcal{T}$ moving, after excitation, like a simple random walk is transient. 
\end{abstract}

\maketitle

\section{Introduction}\label{s:intro}
In this paper, we study a particular case of multi-excited random walks on trees, introduced by Volkov \cite{volkov2003excited}, called the once-excited random walk.\\
Let $M\in \mathbb{N}$, $(\lambda_1,..., \lambda_M)\in (\mathbb{R}_+)^M$ and $\lambda>0$. Let $\mathcal{T}$ be an infinite, locally-finite, tree  rooted at $\r$. The $(\lambda_1,..., \lambda_M,\lambda)$-ERW on $\mathcal{T}$, is a nearest-neighbor random walk $(X_n)$ started at $\r$ such that if $X_n$ is on a site for the $i$-th time for $i\leq M$, then the walker takes a random step of a biased random walk with bias $\lambda_i$ (i.e.~it jumps on its parent with probability proportional to $1$, or jumps on a particular offspring of $\nu$ with probability proportional to $\lambda_i$); and if $i>M$, then $X_n$ takes a random step of a biased random walk with bias $\lambda$. In the case $M=1$, it is called the once-excited random walk with parameters $(\lambda_1, \lambda)$. We write $(\lambda_1,\lambda)$-OERW for $(\lambda_1, \lambda)$-ERW. The definition of the model and the vocabulary will be made clear in Section \ref{definitionofmodel}.\\

Unlike the case of once-reinforced random walk in \cite{collevecchio2017branching} or digging-random walk in \cite{collevecchio2018branching}, the phase transition of OERW does not depend only on the branching-ruin number and the branching number of tree (see Section~\ref{section:example} for more details). In the case $\mathcal {T}$ is a \emph{spherically symmetric} tree, we give a sharp phase transition recurrence/transience in terms of their {\it branching number} and {\it branching-ruin number} and others.\\

In the following, we denote $br(\mathcal T)$ the branching number of a tree $\mathcal{T}$ and $br_r(\mathcal T)$ the branching-ruin number of a tree $\mathcal{T}$, see  \eqref{branchingdef} and \eqref{branchingdef2} for their definitions. Let us simply emphasize that, for any tree $\mathcal{T}$, its branching number is at least one, i.e.~$br(\mathcal{T})\ge1$, whereas the branching-ruin number is nonnegative, i.e.~$br_r(\mathcal{T})\ge0$. 

A tree $\mathcal T$ is  said to be spherically symmetric if for
every vertex $\nu$, $\deg \nu$ depends only on $\left | \nu \right |$,
where $\left  | \nu \right  |$ denote its  distance from the  root and
$\deg \nu$  is its number  of neighbors. Let $\mathcal T$ be a spherically symmetric tree. For any $n\geq 0$, let $x_n$ be the number of children of a vertex at level $n$. For any $\lambda_1\geq 0$ and $\lambda> 0$, we define the following quantities: 
\begin{equation}
    \label{equaofalpha}
    \alpha(\mathcal{T},\lambda_1,\lambda)=\liminf_{n\rightarrow\infty}\left(\prod_{i=1}^{n}\frac{\lambda^2+(x_i-1)\lambda_1\lambda+\lambda_1}{1+x_i\lambda_1}\right)^{1/n}.
\end{equation}
\begin{equation}
    \label{equaofbeta}
    \beta(\mathcal{T},\lambda_1,\lambda)=\limsup_{n\rightarrow\infty}\left(\prod_{i=1}^{n}\frac{\lambda^2+(x_i-1)\lambda_1\lambda+\lambda_1}{1+x_i\lambda_1}\right)^{1/n}.
\end{equation}

\begin{equation}
    \label{equaofgamma}
    \gamma(\mathcal{T},\lambda_1)=\liminf_{n\rightarrow\infty}\frac{-\sum_{i=1}^{n}\ln\left [1-\frac{(x_i-1)\lambda_1+2}{(1+x_i\lambda_1)i}\right]}{\ln n}.
\end{equation}

\begin{equation}
    \label{equaofeta}
    \eta(\mathcal{T},\lambda_1)=\limsup_{n\rightarrow\infty}\frac{-\sum_{i=1}^{n}\ln\left [1-\frac{(x_i-1)\lambda_1+2}{(1+x_i\lambda_1)i}\right]}{\ln n}.
\end{equation}

\begin{theorem}
\label{th:once-excited}
Let $\mathcal T$ be a spherically symmetric tree, and let $\lambda_1\geq 0$, $\lambda>0$. Denote ${\bf X}$ the  $(\lambda_1,\lambda)$-OERW on $\mathcal{T}$. Assume that there exists a constant $M>0$ such that $\sup_{\nu\in V}\deg \nu\leq M$, then we have 
\begin{enumerate}
\item in the case $\lambda=1$,   if  $\eta(\mathcal T, \lambda_1)<br_r(\mathcal T)$ then ${\bf X}$ is transient and if $\gamma(\mathcal T, \lambda_1)>br_r(\mathcal T)$ then ${\bf X}$ is recurrent;
\item assume that $\lambda_1\geq 0$, $\lambda\neq 1$ and $br(\mathcal{T})>1$, if  $\beta(\mathcal{T}, \lambda_1, \lambda)<\frac{1}{br(\mathcal T)}$ then ${\bf X}$ is recurrent and if $\alpha(\mathcal{T}, \lambda_1, \lambda)>\frac{1}{br(\mathcal T)}$ then ${\bf X}$ is transient. 
\end{enumerate}
\end{theorem}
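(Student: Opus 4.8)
The plan is to characterize recurrence and transience through the escape probability $\P_{\r}(\tau_n<\tau_{\r}^+)$, where $\tau_n$ is the hitting time of level $n$ and $\tau_{\r}^+$ the first return time to the root, and to show that $\mathbf X$ is transient if and only if $\lim_n\P_{\r}(\tau_n<\tau_{\r}^+)>0$ (the limit exists since $\{\tau_{n+1}<\tau_{\r}^+\}\subseteq\{\tau_n<\tau_{\r}^+\}$). Because $\mathbf X$ is self-interacting and hence not Markovian, this reduction is not automatic: I would first establish the recurrence/transience dichotomy and this equivalence by adapting the excursion and $0$--$1$ law arguments of \cite{collevecchio2017branching}, using the bounded-degree hypothesis $\deg\nu\le M$ to keep all per-vertex excursion quantities uniformly controlled.

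The core step is to compute, on a spherically symmetric tree, the probability that the walk descends along a fixed self-avoiding path $\r=v_0,v_1,\dots,v_n$ from the root to level $n$. I would decompose the trajectory into the excursions performed at each vertex $v_i$, carefully separating the \emph{first} visit (a biased step of parameter $\lambda_1$, with up-probability $\tfrac{1}{1+x_i\lambda_1}$) from all \emph{later} visits (biased steps of parameter $\lambda$). Resolving these excursions level by level and exploiting the spherical symmetry, I expect the forward transmission factor across level $i$ to be exactly
\[
\frac{\lambda^2+(x_i-1)\lambda_1\lambda+\lambda_1}{1+x_i\lambda_1},
\]
so that the per-path descent weight is $P_n:=\prod_{i=1}^n\frac{\lambda^2+(x_i-1)\lambda_1\lambda+\lambda_1}{1+x_i\lambda_1}$. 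As a consistency check, setting $\lambda_1=\lambda$ collapses this factor to $\lambda$ (recovering the biased walk, with $P_n=\lambda^n$), while setting $\lambda=1$ collapses it to $1$, making the exponential scale degenerate. \textbf{The main obstacle lies precisely here}: when the walk re-enters $v_i$ and steps down to a child that has already been visited, the state of that child's subtree depends on the past, so the naive excursion recursion does not close. I would circumvent this using the spherical symmetry together with monotone comparisons (stochastic domination between the true excursions and idealized fresh/used excursions), which pins down the factor above up to multiplicative constants that are harmless after taking $n$-th roots.

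With the weights $P_n$ in hand, the thresholds follow from the flow (max-flow) and cutset (min-cut) characterizations of $br(\mathcal T)$. For transience I would run a second-moment method on the number $Z_n$ of level-$n$ vertices reached through a successful descent, whose first moment is comparable to $N_nP_n$, where $N_n$ is the number of level-$n$ vertices; since $\liminf_n N_n^{1/n}=br(\mathcal T)$ and $\liminf_n P_n^{1/n}=\alpha(\mathcal T,\lambda_1,\lambda)$, the hypothesis $\alpha>1/br(\mathcal T)$ forces $\liminf_n(N_nP_n)^{1/n}>1$, hence $\sum_n(N_nP_n)^{-1}<\infty$ and a finite-energy unit flow, yielding $\lim_n\P_{\r}(\tau_n<\tau_{\r}^+)>0$. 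For recurrence I would instead produce, from $\beta<1/br(\mathcal T)$ and the cutset definition of the branching number, cutsets $\pi$ with $\sum_{e\in\pi}P_{|e|}$ arbitrarily small; a Nash--Williams/first-moment estimate then gives $\P_{\r}(\tau_n<\tau_{\r}^+)\to 0$, i.e. recurrence.

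Finally, the case $\lambda=1$ requires the refinement announced by $\gamma$ and $\eta$. Since the transmission factor equals $1$ when $\lambda=1$, the exponential scale is uninformative, and I would expand the descent probability to the next order, showing that the correct per-level survival factor is $1-\frac{(x_i-1)\lambda_1+2}{(1+x_i\lambda_1)\,i}$, whose logarithmic partial sums normalized by $\ln n$ are exactly $\gamma(\mathcal T,\lambda_1)$ and $\eta(\mathcal T,\lambda_1)$. On this polynomial scale the governing invariant is the branching-ruin number $br_r(\mathcal T)$, and the same second-moment and cutset arguments, now comparing polynomial exponents rather than exponential rates, give transience when $\eta<br_r(\mathcal T)$ and recurrence when $\gamma>br_r(\mathcal T)$.
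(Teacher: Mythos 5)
Your outline reproduces the paper's skeleton --- Theorem~\ref{maintheorem} (a cutset criterion in terms of edge weights) combined with Lemma~\ref{mainlemma} (evaluation of those weights on spherically symmetric trees) --- and your per-level factors are exactly right: they are the leading factors of $\Psi(e)$ computed in Lemma~\ref{lem:computePhi}, and your consistency checks ($\lambda_1=\lambda$ giving $\lambda$; $\lambda=1$ giving $1$, with the polynomial correction $1-\frac{(x_i-1)\lambda_1+2}{(1+x_i\lambda_1)i}$) agree with the paper. The genuine gap sits at the step you yourself call ``the main obstacle'', and your proposed fix does not close it. The paper does not resolve the self-interaction by stochastic domination between ``fresh'' and ``used'' excursions; it builds, via Rubin's construction (Section~\ref{rubin}), a family of coupled continuous-time \emph{extensions} ${\bf X}^{(e)}$ on the paths $[\varrho,e^+]$, defines the correlated percolation $\mathcal{C}_{CP}(\varrho)=\{e\in E: T^{(e)}(e^+)<T^{(e)}(\varrho)\}$, proves the exact identity $\Psi(e,\lambda_1,\lambda)=\mathbb{P}(T^{(e)}(e^+)<T^{(e)}(\varrho))$ (Lemma~\ref{lem:lemma10}), and shows $\mathbb{P}(T(\varrho)=\infty)=\mathbb{P}(|\mathcal{C}(\varrho)|=\infty)=\mathbb{P}(|\mathcal{C}_{CP}(\varrho)|=\infty)$ (Lemma~\ref{lemma1}). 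This coupling is also what validates your first-moment (recurrence) step: one needs a comparison between the true walk's probability of crossing $e$ before returning to $\varrho$ and the path weight $\Psi(e)$, which is not obvious for a non-Markovian walk and is precisely what the extension construction delivers (Proposition~\ref{proprec}, following \cite{collevecchio2017branching}).

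The more serious hole is in your transience argument. A second-moment bound on $Z_n$ requires an upper bound on the joint probabilities $\mathbb{P}(u \text{ and } v \text{ both reached before returning to } \varrho)$ for pairs $u,v$ at level $n$; for a self-interacting walk these correlations are the whole difficulty, and nothing in your sketch controls them. Worse, the tools you invoke at this point --- ``finite-energy unit flow'' and Nash--Williams --- are electrical-network statements about reversible Markov chains, and an OERW is neither Markovian nor reversible, so they do not apply. The paper's substitute is the quasi-independence of the percolation induced by $\mathcal{C}_{CP}$ (Lemma~\ref{lemma2}): conditionally on $e_1\wedge e_2\in\mathcal{C}_{CP}(\varrho)$, the probability that both $e_1$ and $e_2$ lie in $\mathcal{C}_{CP}(\varrho)$ is at most $C_Q$ times the product of the two conditional marginals; its proof compares the exponential clocks of Rubin's construction along the common ancestor path (Lemma~\ref{lem:abcd1}), and it is here --- not merely in ``keeping excursion quantities uniformly controlled'' --- that the bounded-degree hypothesis \eqref{condition1} is actually used. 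Given quasi-independence, transience for $RT(\mathcal{T},{\bf X})>1$ follows from the supercriticality criterion for quasi-independent percolations (appendix A.2 of \cite{collevecchio2018branching}). Without a lemma playing the role of Lemma~\ref{lemma2}, your second-moment argument cannot be completed, so the transience half of both items (1) and (2) remains unproven in your proposal.
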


Note that, for a $b$-ary tree, we have $br(\mathcal{T})=b$ and 

\begin{equation}
\alpha(\mathcal{T}, \lambda_1, \lambda)=\beta(\mathcal{T}, \lambda_1, \lambda)=\frac{\lambda^2+(b-1)\lambda\lambda_1+\lambda_1}{1+b\lambda_1}
\end{equation}
and our result therefore agrees with Corollary 1.6 of \cite{basdevant2009recurrence}. In \cite{basdevant2009recurrence}, the authors prove that the walk is recurrent at criticality on regular trees, but this is not expected to be true on any tree). For instance, if $\lambda_1=\lambda$, the $(\lambda,\lambda)$-OERW $\bf X$ is the biased random walk with parameter $\lambda$. Therefore $\bf X$ may be recurrent or transient at criticality (see~\cite{beffara2017trees}, proposition 22).\\

Volkov \cite{volkov2003excited} conjectured that, any cookie random walk which moves, after excitation, like a simple random walk (i.e. $\lambda=1$) is transient on any tree containing the binary tree. This conjecture was proved by Basdevant and Singh \cite{basdevant2009recurrence}. Here, we extend this conjecture to any tree $\mathcal T$ whose branching number is larger than $1$:

\begin{theorem}
\label{thm:once-excitedbis}
Let $(\lambda_1,...,\lambda_M)\in (\mathbb{R}_+)^M$ and consider $(\lambda_1,...,\lambda_M,1)$-ERW $\bf X$ on an infinite, locally finite, rooted tree $\mathcal T$. If $br(\mathcal T)> 1$, then $\bf X$ is transient. 
\end{theorem}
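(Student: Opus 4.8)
The plan is to establish the strong transience statement $|X_n|\to\infty$ almost surely, by first proving a uniform lower bound on escape probabilities and then bootstrapping it with a conditional Borel--Cantelli argument. The sole geometric input I extract from the hypothesis is a flow: since $br(\mathcal T)>1$, fix $\lambda\in(1,br(\mathcal T))$, so that, by the definition of the branching number, $\inf_{\pi}\sum_{e\in\pi}\lambda^{-|e|}>0$ where the infimum is over cutsets $\pi$ separating $\r$ from infinity. The max-flow min-cut theorem for trees then yields a flow $\theta$ from $\r$ to infinity of positive strength $s$ with $\theta(e)\le\lambda^{-|e|}$. Conservation of $\theta$ across the level-$k$ cutset gives $\sum_{|e|=k}\theta(e)^2\le\lambda^{-k}\sum_{|e|=k}\theta(e)=s\,\lambda^{-k}$, so $\theta$ has finite energy; this is precisely what renders the post-excitation simple random walk transient ($C_{\mathrm{eff}}(\r\leftrightarrow\infty)>0$), and it is the quantity I will carry through the excited dynamics.

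The reduction I would set up is the following Main Lemma: there is $c>0$ such that, for every cookie configuration (whatever has already been consumed, and whatever fresh or partial cookies remain deeper in $\mathcal T$), the walk started at $\r$ reaches infinity without returning to $\r$ with probability at least $c$. Granting this, almost-sure transience follows: were $\mathbf X$ recurrent it would return to $\r$ infinitely often and make infinitely many excursions, each escaping with conditional probability at least $c$ given the past (the Main Lemma applied to the current, monotonically more-consumed configuration); the divergent half of L\'evy's conditional Borel--Cantelli lemma then forces some excursion to escape, contradicting recurrence.

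For the Main Lemma I would follow the cutset/excursion scheme of Collevecchio--Kious--Sidoravicius. The escape event is the decreasing limit of the events $A_k=\{\text{reach level }k\text{ before returning to }\r\}$, so it suffices to bound $\P(A_k)$ from below uniformly in $k$. I would split a trajectory into excursions between consecutive level cutsets and estimate, crossing by crossing, the conditional chance of advancing one further level against that of being driven back, measuring these against $\theta$. The decisive structural fact is that each vertex is excited at most $M$ times, so along a whole excursion the cookies act at a given vertex on at most $M$ of its visits; hence, at the level of an excursion rather than a single step, the cookie-perturbed crossing probabilities differ from their simple-random-walk values by a factor bounded below uniformly in the vertex. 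The summability $\sum_k\lambda^{-k}<\infty$ supplied by $\theta$ then keeps the resulting product $\prod_k\bigl(1-(\text{backtrack contribution})\bigr)$ bounded away from $0$, giving $\inf_k\P(A_k)>0$.

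The main obstacle is that $\mathbf X$ is neither Markovian nor reversible, so the electrical-network identities that make the simple-random-walk computation immediate are unavailable, and excursions into disjoint subtrees are independent only after conditioning on the cookie environment. A sharper form of the same difficulty is that the single-step comparison with the simple random walk degenerates exactly in the extreme admissible case $\lambda_i=0$, where a first visit is sent deterministically back to the parent and the one-step probability of descending is $0$, not merely small; this is why the comparison must be performed at the level of excursions, using that after at most $M$ such forced returns the vertex behaves as a simple-random-walk vertex and descends with the usual probability. Turning ``a bounded cookie budget per vertex'' into ``a uniformly bounded distortion of the crossing probabilities,'' uniform over the unknown deeper cookie configuration, is the technical heart of the proof; monotonicity of the escape probability in each $\lambda_i$, via the standard coupling of excited walks driven by common uniform variables, can be invoked to reduce to the extreme configuration and streamline this step.
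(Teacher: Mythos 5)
Your first step is correct and coincides with the paper's: by the monotone coupling in the cookie strengths (the paper cites Section 3 of Basdevant--Singh for exactly this, see Remark \ref{rem:abcd1} and Proposition \ref{prop:once-excitedbis}), it suffices to treat the extreme configuration $\lambda_1=\cdots=\lambda_M=0$, i.e.\ the $M$-digging random walk $M$-DRW$_1$, whose escape probability lower-bounds that of any $(\lambda_1,\dots,\lambda_M,1)$-ERW, uniformly over partially consumed environments. Your conditional Borel--Cantelli bootstrap from the Main Lemma is also unobjectionable, since the pair (position, remaining cookies) is Markov. The divergence is in what comes next: the paper concludes by quoting Theorem 2 of \cite{collevecchio2018branching} together with Lemma \ref{lem:br_rT=infinity} ($br(\mathcal{T})>1$ implies $br_r(\mathcal{T})=\infty$), whereas you propose to prove the required transience of $M$-DRW$_1$ from scratch. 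Your Main Lemma \emph{is} that theorem, and the sketch you give for it has a genuine gap.

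Concretely, two things fail. First, the product-over-levels estimate. For the digging walk, the paper's own computation (Lemma \ref{lem:computePhi} with $\lambda_1=0$, $\lambda=1$, which covers $M=1$; the general case is analogous and is carried out in \cite{collevecchio2018branching}) gives per-edge crossing probabilities $\phi(g)=1-2/|g|$, versus $1-1/|g|$ for simple random walk. The per-edge distortion $\bigl(1-2/|g|\bigr)/\bigl(1-1/|g|\bigr)=1-\tfrac{1}{|g|-1}$ is indeed bounded below edge by edge, exactly as you claim, but its cumulative product along a path of height $n$ is of order $1/n$, so the deficits your product $\prod_k\bigl(1-\text{backtrack}_k\bigr)$ must beat are of order $1/k$ at height $k$, not $O(\lambda^{-k})$, and $\prod_k(1-c/k)=0$. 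No first-moment, single-ray product computation can see the transience of the digging walk; it comes from the branching, and the finite energy of your flow $\theta$ enters through a weighted \emph{second}-moment bound (indeed $\sum_e \theta(e)^2/\Psi(e)\lesssim \sum_k k^{M+1}\lambda^{-k}<\infty$, since $\Psi(e)\asymp|e|^{-(M+1)}$), not through a product over levels. Second, such a second-moment argument is only legitimate after one proves that the correlated percolation of crossed edges is quasi-independent; this is the hard technical step (in this paper, Rubin's construction of the coupled extensions of Section \ref{rubin} plus Lemma \ref{lemma2}, and its analogue in \cite{collevecchio2018branching}). You flag precisely this dependence problem, call it ``the technical heart,'' and then leave it unaddressed. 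As written, the proposal asserts rather than proves its central estimate; the honest fix is either to carry out the percolation/quasi-independence program, or simply to cite Theorem 2 of \cite{collevecchio2018branching} as the paper does.
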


The techniques used our paper rely on the strategy adopted in \cite{collevecchio2017branching} or \cite{collevecchio2018branching}. In particular, for the proof of transience, we here too view the set of edges crossed by ${\bf X}$ before returning to $\r$ as the cluster of the root in a particular correlated percolation.\\
There are two key ingredients that allow us to use the rest of the strategy from \cite{collevecchio2017branching}. First, we need to define {\it extensions} of ${\bf X}$, which are a family of coupled continuous-time versions of ${\bf X}$ defined on subtrees of $\mathcal{T}$. As in \cite{collevecchio2017branching}, we do this through Rubin's construction in Section \ref{rubin}. But we will see in Section \ref{rubin}, this construction is actually very different to a once-reinforced random walk in \cite{collevecchio2017branching} or $M$-digging random walk in \cite{collevecchio2018branching}.\\
Second, we need to prove that the correlated percolation mentioned above is in fact a {\it quasi-independent} percolation, see Lemma \ref{lemma2}. From there, the problem boils down to proving that a certain quasi-independent percolation is supercritical. \\
We refer to Theorem~\ref{maintheorem} for the more general result on a general tree. 

\section{The model}

First, we review some basic definitions of graph theory and then we define the model of multi-excited random walk on trees which was introduced by Volkov\cite{volkov2003excited} and then made general by Basdevant and Singh\cite{basdevant2009recurrence}. 

\subsection{Notation}
\label{sub:notation}
 Let  $\mathcal{T}=(V,E)$ be an infinite, locally finite, rooted tree with the root $\varrho$.\\
 Given two vertices $\nu,\mu$ of $\mathcal{T}$, we say that $\nu$ and $\mu$ are  \emph{neighbors}, denoted $\nu\sim \mu$, if $\{\nu,\mu\}$ is an edge of $\mathcal{T}$.\\
Let $\nu, \mu \in V\setminus\{\varrho\}$, the \emph{distance} between $\nu$ and $\mu$, denoted by $d(\nu,\mu)$, is the minimum number of edges of the unique self-avoiding paths joining $x$ and $y$. The distance between $\nu$ and $\varrho$ is called \emph{height} of $\nu$, denoted by $|\nu|$. The \emph{parent} of $\nu$ is the vertex $\nu^{-1}$ such that $\nu^{-1}\sim \nu$ and $|\nu^{-1}|=|\nu|-1$. We also call $\nu$ is a \emph{child} of $\nu^{-1}$.\\
For any $\nu\in V$, denote by $\partial(\nu)$ the number of children of $\nu$ and $\{\nu_1,..., \nu_{\partial \nu}\}$ is the set of children of $\nu$. We define an order on $\mathcal T$ by the following way. For all $\nu$ and $\mu$, we say that $\nu \leq \mu$ if the unique self-avoiding path joining $\varrho$ and $\mu$ contains $\nu$,  and we say that $\nu< \mu$ if moreover $\nu \neq \mu$.\\
Denote by $\mathcal{T}_n$ the set of vertices of $\mathcal{T}$ at height $n$. For any $\nu\in \mathcal{T}$, denote by $\mathcal{T}^{\nu}$ the biggest sub-tree of $\mathcal{T}$ rooted at $\nu$, i.e. $\mathcal{T}^u= \mathcal{T}[V^u]$, where $$ V^u:=\left \{ v\in  V(T): u \leq v \right \}.$$\\
For any edge $e$ of $\mathcal{T}$,
denote by  $e^+$ and $e^-$ its endpoints with $|e^+|=|e^-|+1$, and we define the \emph{height} of $e$ as $|e|=|e^+|$.\\
For two edges $e$ and $g$ of $\mathcal{T}$, we write $g\leq e$ if $g^+ \leq e^+ $ and $g<e$ if moreover $g^+\neq e^+$. For two vertices $\nu$ and $\mu$ of $\mathcal{T}$ such that $\nu<\mu$, we denote by $[\nu,\mu]$  the unique self-avoiding path joining $\nu$ to $\mu$. For two neighboring vertices $\nu$ and $\mu$, we use the slight abuse of notation $[\nu,\mu]$ to denote the edge with endpoints $\nu$ and $\mu$ (note that we allow $\mu<\nu$).\\
 For two edges $e_1$ and $e_2$ of $E$, denote by $e_1\wedge e_2$ the vertex with maximal distance from the root such that $e_1\wedge e_2\leq e_1^+$ and $e_1\wedge e_2\leq e_2^+$.\\ 

Finally, we define a particular class of trees, which is called \emph{superperiodic tree}. Let $\mathcal{T}_1=(V_1,E_1)$ and $\mathcal{T}_2=(V_2,E_2)$ be two trees. A \emph{morphism} of $\mathcal{T}_1$ to $\mathcal{T}_2$ is a map $f: \mathcal{T}_1 \rightarrow \mathcal{T}_2$ such that whenever $\nu$ and $\mu$ and $\mu$ are incident in $\mathcal{T}_1$, then so are $f(\nu)$ and $f(\mu)$ in $\mathcal{T}_2$.\\
 Let $N\geq 0$.  An infinite, locally finite and rooted tree $\mathcal T$ with the root $\varrho$, is said to be \emph{$N$-superperiodic} if for every $\nu \in V(\mathcal T)$, there exists
    an injective morphism $f:\mathcal T\rightarrow  \mathcal T^{f(o)}$ with $f(o)\in \mathcal T^\nu$ and 
    $|f(o)|-|\nu|\leq N$. A tree $\mathcal{T}$ is called \emph{superperiodic tree} if there exists $N\geq 0$ such that it is $N$-superperiodic.

\subsection{Some quantities on trees}
In this section, we review the definitions of branching number, growth rate and branching-ruin number. We refer the reader to (\cite{Furs} , \cite{LP:book}) for more details on the branching number and growth rate and \cite{collevecchio2017branching} for more details on the branching-ruin number.\\

In order to define the branching number and the branching-ruin number of a tree, we will need the notion of {\it cutsets}.\\
Let $\mathcal T$ be an infinite, locally finite and rooted tree. A cutset in $\mathcal{T}$  is a set $\pi$ of edges such that every infinite simple path from $a$ must include an edge in $\pi$. The set of cutsets is denoted by $\Pi$.\\

The branching number of $\mathcal{T}$ is defined as
\begin{equation}\label{branchingdef}
br(\mathcal T)  =  \sup \left\{  \gamma  >0 :  \inf_{\pi\in\Pi} \ \sum_{e\in
      \pi}\gamma^{-\left  |  e \right  |}>0  \right  \}\in[1,\infty].
\end{equation}
The branching-ruin number of $\mathcal{T}$ is defined as
\begin{equation}\label{branchingdef2}
br_r(\mathcal T)  =  \sup \left\{  \gamma  >0 :  \inf_{\pi\in\Pi} \ \sum_{e\in
      \pi}|e|^{-\gamma}>0  \right  \}\in[0,\infty].
\end{equation}

These quantities depend on the structure of the tree. If $\mathcal{T}$ is spherically symmetric, then there is really no information in the tree than that contained in the sequence $\left(|\mathcal T_n|, n\geq 0\right)$. Therefore, a tree which is spherically symmetric and whose $n$ generation grows like $b^n$ (resp.~$n^b$), for $b\ge1$, has a branching number (resp.~branching-ruin number) equal to $b$. For more general trees, this becomes more complicated. In the other word, there exists a tree whose $n$ generation grows like $b^n$ (resp.~$n^b$), for $b\ge1$, but its branching number (resp.~branching-ruin number) is not equal to $b$. For instance, the tree 1-3 in (\cite{LP:book}, page 4) is an example.\\

Finally, we review the definition of \emph{growth rate} of an infinite, locally finite and rooted tree $\mathcal{T}$. Define the \emph{lower growth rate} of $\mathcal{T}$ by 
\begin{equation}
\underline{gr(\mathcal T)}=\liminf \left | \mathcal T_{n} \right |^{\frac{1}{n}}.
\end{equation}

Similarly, we can define \emph{upper growth rate} of $\mathcal{T}$ by 
\begin{equation}
\overline{gr(\mathcal T)}=\limsup\left | \mathcal T_{n} \right |^{\frac{1}{n}}.
\end{equation}
In the case $\overline{gr(\mathcal T)}=\underline{gr(\mathcal T)}$, we define the \emph{growth rate} of $\mathcal{T}$, denoted by $gr(\mathcal T)$,  by taking the common value of $\overline{gr(\mathcal T)}$ and $\underline{gr(\mathcal T)}.$

Now, we state a relationship between the branching number and growth rate of a superperiodic tree. 

\begin{theorem}[see~\cite{LP:book}] \label{sousperiodic}
  Let $\mathcal T$ be a $N$-superperiodic tree   with
  $\overline{gr}(\mathcal  T)  <  \infty$.  Then the  growth  rate  of
  $\mathcal T$ \!\!exists and $gr(\mathcal T)=br(\mathcal T)$. Moreover, we have $|\mathcal{T}_n|\leq gr(\mathcal{T})^{n+N}$. 
\end{theorem}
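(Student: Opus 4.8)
The plan is to deduce all three assertions --- existence of the growth rate, the bound $|\mathcal T_n|\le gr(\mathcal T)^{n+N}$, and the identity $gr(\mathcal T)=br(\mathcal T)$ --- from a single \emph{supermultiplicativity} estimate on $a_n:=|\mathcal T_n|$, proved in that order. First I record two structural consequences of $N$-superperiodicity. Since the whole (infinite) tree embeds injectively into every subtree $\mathcal T^\nu$, each $\mathcal T^\nu$ is infinite, so every vertex has at least one child; hence $a_{n+1}=\sum_{\nu\in\mathcal T_n}\partial(\nu)\ge a_n$, i.e. $(a_n)$ is non-decreasing. Next, fix $\nu\in\mathcal T_m$ and the morphism $f_\nu:\mathcal T\to\mathcal T^{f_\nu(\varrho)}$ with $\ell_\nu:=|f_\nu(\varrho)|\in[m,m+N]$; being an injective morphism, $f_\nu$ carries each geodesic from $\varrho$ to a geodesic from $f_\nu(\varrho)$, hence preserves distance to the root and maps $\mathcal T_k$ injectively into level $\ell_\nu+k$ of $\mathcal T^\nu\subseteq\mathcal T$.

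Counting the vertices of $\mathcal T$ at level $m+N+k$ according to which subtree $\mathcal T^\nu$ ($\nu\in\mathcal T_m$) they lie in --- these subtrees partition all vertices above level $m$ --- each $\mathcal T^\nu$ contains the $a_{j}$ image vertices of $f_\nu$ at copy-level $j=m+N+k-\ell_\nu\ge k$, and $a_j\ge a_k$ by monotonicity. Summing over the $a_m$ vertices $\nu\in\mathcal T_m$ yields the key inequality
\begin{equation}
a_{m+N+k}\ \ge\ a_m\,a_k,\qquad m,k\ge0 .
\end{equation}
Iterating this gives $a_{p(m+N)-N}\ge a_m^{\,p}$, and monotonicity fills the gaps: for fixed $m$ and all large $n$, choosing $p=\lfloor(n+N)/(m+N)\rfloor$ gives $a_n\ge a_{p(m+N)-N}\ge a_m^{\,p}$, so $a_n^{1/n}\ge a_m^{\,p/n}$; letting $n\to\infty$ (so $p/n\to 1/(m+N)$) yields $\underline{gr}(\mathcal T)\ge a_m^{1/(m+N)}$ for every $m$. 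Rearranged this is exactly $|\mathcal T_m|\le\underline{gr}(\mathcal T)^{\,m+N}$, and taking $\limsup_m$ on the right (using $\overline{gr}(\mathcal T)<\infty$, so that $\tfrac1{m+N}\ln a_m-\tfrac1m\ln a_m\to0$) gives $\underline{gr}(\mathcal T)\ge\overline{gr}(\mathcal T)$. Hence $\underline{gr}=\overline{gr}=gr(\mathcal T)$ exists and $|\mathcal T_n|\le gr(\mathcal T)^{\,n+N}$.

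It remains to prove $gr(\mathcal T)=br(\mathcal T)$. For the inequality $br\le gr$ I test the defining infimum against the spherical cutsets $\pi_n=\{e:|e|=n\}$: for $\gamma>gr(\mathcal T)$ one has $\sum_{e\in\pi_n}\gamma^{-|e|}=a_n\gamma^{-n}=(a_n^{1/n}/\gamma)^n\to0$ because $a_n^{1/n}\to gr<\gamma$, so $\inf_{\pi\in\Pi}\sum_{e\in\pi}\gamma^{-|e|}=0$ and $\gamma$ is excluded from the set defining $br(\mathcal T)$. For the reverse inequality $br\ge gr$ I fix $\gamma<gr(\mathcal T)$ and invoke the max-flow min-cut theorem on the locally finite tree: $\inf_{\pi\in\Pi}\sum_{e\in\pi}\gamma^{-|e|}$ equals the strength of a maximal flow from $\varrho$ to infinity obeying the capacities $\gamma^{-|e|}$, so it suffices to produce one nonzero such flow. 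This is where self-similarity enters: pushing a capacity-respecting flow on $\mathcal T$ forward through $f_\nu$ and rescaling by $\gamma^{-\delta_\nu}$ (with $\delta_\nu=\ell_\nu-|\nu|\le N$) shows the maximal-flow value of each subtree $\mathcal T^\nu$, measured with its relative-height capacities, is at least $\gamma^{-N}$ times that of $\mathcal T$; thus a cheap cutset near the root cannot be propagated into the subtrees without contradicting $a_n\gamma^{-n}\to\infty$. Making this quantitative --- e.g. building the flow as a weak limit of the unit-strength flows that equidistribute mass over level $n$ and checking, via the embeddings $f_\nu$, that the bound $\theta(e)\le\gamma^{-|e|}$ holds uniformly in $n$ --- produces a positive flow, giving $br\ge\gamma$; letting $\gamma\uparrow gr(\mathcal T)$ finishes the equality.

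The routine part is the supermultiplicativity estimate and the Fekete-type argument of the first two paragraphs, which already deliver the existence of $gr(\mathcal T)$ and the explicit bound $|\mathcal T_n|\le gr(\mathcal T)^{n+N}$. The genuine difficulty is the lower bound $br(\mathcal T)\ge gr(\mathcal T)$: turning the statement ``the tree reproduces itself inside every subtree'' into a uniform positive lower bound for $\sum_{e\in\pi}\gamma^{-|e|}$ over \emph{all} cutsets $\pi$, equivalently the construction of a capacity-respecting flow. Controlling the non-spherical cutsets is exactly the step where superperiodicity, rather than mere control of the growth of $|\mathcal T_n|$, is indispensable, and it is the main obstacle of the proof.
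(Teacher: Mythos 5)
The statement you were asked to prove is not actually proved in the paper: Theorem \ref{sousperiodic} is quoted from \cite{LP:book}, so the comparison can only be with the standard argument there. Your first two paragraphs are correct, complete, and coincide with that argument: the injective morphisms give disjoint embedded copies yielding $|\mathcal T_{m+N+k}|\ge|\mathcal T_m|\,|\mathcal T_k|$, the Fekete-type iteration then gives the existence of $gr(\mathcal T)$ together with $|\mathcal T_n|\le gr(\mathcal T)^{n+N}$, and testing against the spherical cutsets gives $br(\mathcal T)\le gr(\mathcal T)$. Up to there, nothing is missing.

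The genuine gap is in the last step, $br(\mathcal T)\ge gr(\mathcal T)$, which you correctly identify as the heart of the theorem but do not prove; moreover both mechanisms you sketch fail. First, ``pushing a capacity-respecting flow on $\mathcal T$ forward through $f_\nu$'' is circular: it converts a nonzero feasible flow on $\mathcal T$ into one on a subtree, but the existence of a nonzero feasible flow on $\mathcal T$ is exactly what has to be established, so the self-improvement cannot get off the ground. Second, the fallback construction --- a weak limit of unit-strength flows equidistributing mass over level $n$ --- is infeasible in general: the flow of $\theta_n$ through an edge $e$ is $|\mathcal T^{e^+}\cap\mathcal T_n|/|\mathcal T_n|$, so the capacity constraint $\theta_n(e)\le\gamma^{-|e|}$ is an \emph{upper} bound on relative subtree sizes, whereas superperiodicity only yields \emph{lower} bounds on subtree sizes (upper bounds are what subperiodicity gives); so ``checking via the embeddings $f_\nu$'' pushes in the wrong direction. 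Concretely, let $L\ge2$, $b\ge2$ and let $\mathcal T$ be the spherically symmetric tree with $x_i=b$ when $i\equiv L-1 \pmod L$ and $x_i=1$ otherwise: this tree is $(L-1)$-superperiodic with $gr(\mathcal T)=b^{1/L}>1$, yet its first $L-1$ levels form a single path, so every $\theta_n$ (and hence any weak limit) carries flow $1$ through the edges of that path, violating $\theta(e)\le\gamma^{-|e|}$ for every $\gamma>1$. One can rescale the total strength to repair this example, but proving that the required rescaling is bounded away from $0$ uniformly over the tree is equivalent to the uniform cutset bound $\inf_\pi\sum_{e\in\pi}\gamma^{-|e|}>0$ one is trying to prove. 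The proof in \cite{LP:book} proceeds differently: given $\gamma<gr(\mathcal T)$, choose $m$ with $|\mathcal T_m|\ge\gamma^{m+N}$ (possible since $|\mathcal T_m|^{1/(m+N)}\to gr(\mathcal T)$), use the embeddings $f_v$ to build a generational subtree in which each generation-$k$ vertex has $|\mathcal T_m|$ designated generation-$(k+1)$ descendants within $m+N$ levels, and then compare an arbitrary finite cutset of $\mathcal T$ with the cutset it induces on the \emph{collapsed} generation tree, whose vertices all have at least $\gamma^{m+N}$ children; this comparison of cutsets loses only a factor $\gamma^{m+N}$ and yields the uniform lower bound. Note that it really must be run on cutsets rather than flows: pulling the equal-splitting flow of the collapsed tree back to $\mathcal T$ again violates capacities (e.g.\ when all branching between two generations occurs at the bottom of the gap), which is precisely the trap your sketch falls into.
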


\subsection{Definition of the model}
\label{definitionofmodel}
Now, we define the model of multi-excited random walk on trees. Let $\mathcal{C}=(\lambda_1,...,\lambda_M; \lambda)\in \left(\mathbb{R}_+\right)^M\times \mathbb{R}_+^*$ and $\mathcal{T}=(V,E)$ be an infinite, locally finite and rooted tree with the root $\varrho$. A $\mathcal{C}$ multi-excited random walk is a stochastic process $\bf X$ $:=(X_n)_{n\geq 0}$ defined on some probability space, taking the values in $\mathcal{T}$ with the transition probability defined by:

$$\mathbb{P}(X_0=\varrho)=1,$$

$$\mathbb{P}\left(X_{n+1}=(X_n)_i|X_0,\cdots, X_n\right)=\left\{\begin{matrix}
\frac{\lambda_j}{1+\partial(X_n)\lambda_j} \, \, \, \, \text{ if } j\leq M\\ 
\frac{\lambda}{1+\partial(X_n)\lambda} \, \, \, \, \text{ if } j > M
\end{matrix}\right.$$
    
$$\mathbb{P}\left(X_{n+1}=X_n^{-1}|X_0,\cdots, X_n\right)=\left\{\begin{matrix}
\frac{1}{1+\partial(X_n)\lambda_j} \, \, \, \, \text{ if } j\leq M\\ 
\frac{1}{1+\partial(X_n)\lambda} \, \, \, \, \text{ if } j > M
\end{matrix}\right.$$

where $i\in \{1,\cdots, k\}$ and $j=|\{0\leq k\leq n: X_k=X_n\}|$. 

We have some particular cases: 
\begin{itemize}
\item If $\mathcal{C}=(0,...,0; \lambda)$, then $\mathcal{C}$ multi-excited random walk is $M$-digging random walk with parameter $\lambda$ ($M$-DRW$_\lambda$), which was studied in \cite{collevecchio2018branching}.
\item If $M=0$, then $\mathcal{C}$ multi-excited random walk is the biased random walk with parameter $\lambda$, which was studied in \cite{lyons1990random}.
\item If $\mathcal{C}=(\lambda_1; \lambda)$, then $\mathcal{C}$ multi-excited random walk is $(\lambda_1,\lambda)$-OERW. 
\end{itemize}

The \emph{return time} of $\bf X$ to a vertex $\nu$ is defined by:
\begin{equation}
T(\nu):=\inf\{n\geq 1: X_n=\nu\}.
\end{equation}
We say that $\bf X$ is \emph{transient} if 
\begin{equation}
\mathbb{P}\left(T(\varrho)=\infty\right)>0. 
\end{equation}
Otherwise, we say that $\bf X$ is \emph{recurrent}. 

\section{Main results}

\subsection{Main results about Once-excited random walk}
Let $\lambda_1\geq 0$ and $\lambda>0$ and we consider the model $(\lambda_1,\lambda)$-OERW on an infinite, locally finite and rooted tree $\mathcal{T}$. First, we define the following functions. For any $e\in E$,  we set $\psi(e,\lambda)=1$ and  $\phi(e,\lambda_1,\lambda)=1$ if $|e|=1$ and, for any $e\in E$ with $|e|>1$, we set
\begin{equation}
\label{psi}
\begin{split}
    \psi(e,\lambda)&=\frac{\lambda^{|e|-1}-1}{\lambda^{|e|}-1}  \text{ if }\lambda\neq1,\\
        \psi(e,\lambda)&=\frac{|e|-1}{|e|}  \text{ if }\lambda=1.\\        
    \end{split}
\end{equation}

\begin{equation}
\label{phi}
 \phi(e, \lambda_1, \lambda) =  \frac{\lambda_1}{1+\partial(e^-)\lambda_1}+\frac{1}{1+\partial(e^-)\lambda_1}\psi(e,\lambda)\psi(e^{-1},\lambda)+\frac{(\partial(e^-)-1)\lambda_1}{1+\partial(e^-)\lambda_1}\psi(e,\lambda)   
\end{equation}
Finally, for any $e\in E$, we define: 
 
\begin{equation}
    \label{Psi}
    \Psi(e, \lambda_1, \lambda)= \prod_{g\leq e}\phi(g,\lambda_1,\lambda). 
\end{equation}
We refer the reader to Lemma~\ref{lem:lemma10} for the probabilistic interpretation of these functions.\\

In the following, we assume that
\begin{equation}\label{condition1}
  \exists M\in \mathbb{N} \text{ such that } \sup\{\deg \nu: \nu \in V\}\leq M. 
\end{equation}

Let us define the quantity $RT(\mathcal{T},{\bf X})$ which was introduced in \cite{collevecchio2017branching}:
\begin{equation}
RT(\mathcal{T},{\bf X})=\sup\{\gamma > 0: \inf_{\pi \in \Pi}\sum_{e\in \pi}\left(\Psi(e)\right)^\gamma>0 \}.
\end{equation}

\begin{theorem}
  \label{maintheorem}
  Consider an $(\lambda_1, \lambda)$-OERW on an infinite, locally finite, rooted tree $\mathcal T$, with parameters $\lambda_1\geq 0$ and $\lambda>0$. If $RT(\mathcal{T},{\bf X})<1$ then ${\bf X}$ is recurrent. If $RT(\mathcal{T},{\bf X})>1$ and if \eqref{condition1} holds, then ${\bf X}$ is transient. 
\end{theorem}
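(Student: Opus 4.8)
The plan is to prove the two halves of the dichotomy by complementary moment methods, both resting on the probabilistic meaning of $\Psi$ recorded in Lemma~\ref{lem:lemma10}. I expect that lemma to identify $\Psi(e)$, up to universal constants, with the probability that ${\bf X}$ started at $\varrho$ ever traverses $e$ outward (from $e^-$ to $e^+$); the product structure in \eqref{Psi} then reflects that, once the single excitation at a vertex has been spent, the increments of ${\bf X}$ along the ray $[\varrho,e^+]$ are governed by a $\lambda$-biased walk and compound multiplicatively. Under this reading the exponent $\gamma=1$ in the definition of $RT(\mathcal T,{\bf X})$ is exactly the first/second-moment threshold, which is why it is the critical value in the statement.

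\emph{Recurrence ($RT(\mathcal T,{\bf X})<1$).} Here no degree bound is needed. Since $\gamma=1$ exceeds $RT(\mathcal T,{\bf X})$, the definition gives $\inf_{\pi\in\Pi}\sum_{e\in\pi}\Psi(e)=0$, so I may pick cutsets $\pi_k$ with $\sum_{e\in\pi_k}\Psi(e)\to0$. Because $\{T(\varrho)=\infty\}$ forces ${\bf X}$ to reach vertices of unbounded height and hence to cross every cutset, a union bound gives
\[
\P\bigl(T(\varrho)=\infty\bigr)\ \le\ \sum_{e\in\pi_k}\P\bigl(\text{${\bf X}$ ever crosses $e$ outward}\bigr)\ \le\ C\sum_{e\in\pi_k}\Psi(e)\ \xrightarrow[k\to\infty]{}\ 0,
\]
whence ${\bf X}$ is recurrent. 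The only point requiring care is the uniform per-edge estimate $\P(\cdots)\le C\,\Psi(e)$, which is precisely the content of Lemma~\ref{lem:lemma10}.

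\emph{Transience ($RT(\mathcal T,{\bf X})>1$ and \eqref{condition1}).} Here I follow the percolation strategy of \cite{collevecchio2017branching}. First I would use the \emph{extensions} of ${\bf X}$ built by Rubin's device in Section~\ref{rubin} to realize, on one probability space, the coupled continuous-time walks on the subtrees $\mathcal T^\nu$, so that the set of edges ${\bf X}$ crosses before returning to $\varrho$ becomes the cluster of $\varrho$ in a correlated bond percolation with marginals comparable to $\Psi(e)$. Lemma~\ref{lemma2} then shows this percolation is \emph{quasi-independent}, the joint opening probabilities factorizing up to a multiplicative constant; this is the step that consumes the uniform degree bound \eqref{condition1}. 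Finally, choosing $\gamma\in\bigl(1,RT(\mathcal T,{\bf X})\bigr)$ with $\inf_{\pi\in\Pi}\sum_{e\in\pi}\Psi(e)^\gamma>0$, I would feed the first-moment lower bound from this cutset condition together with the second-moment control from quasi-independence into a Paley--Zygmund argument, concluding that the cluster of $\varrho$ is infinite with positive probability, i.e.\ ${\bf X}$ is transient. The strict inequality $RT(\mathcal T,{\bf X})>1$ is what supplies the margin $\gamma>1$ needed to absorb the quasi-independence constants, exactly as strict supercriticality does for a branching process.

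The main obstacle lies in the transience half, and within it the construction of the extensions. As the authors emphasize, Rubin's construction for the OERW departs substantially from the once-reinforced case of \cite{collevecchio2017branching}: the law of ${\bf X}$ switches between bias $\lambda_1$ on the first visit to a site and bias $\lambda$ on all later visits, so the continuous-time clocks encoding these two regimes must be coupled consistently across the whole family of subtrees while preserving both the correct marginals and the monotonicity required by the percolation comparison. Establishing the quasi-independence estimate of Lemma~\ref{lemma2} with a constant uniform over $\mathcal T$ — the only place where \eqref{condition1} is genuinely used — is the delicate quantitative point; once it is secured, the passage from ``$\inf_\pi\sum_e\Psi(e)^\gamma>0$ plus quasi-independence'' to a supercritical cluster is the now-standard argument of \cite{collevecchio2017branching}.
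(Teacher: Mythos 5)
Your proposal follows essentially the same route as the paper: recurrence by a first-moment/union bound over cutsets with per-edge probability controlled by $\Psi(e)$ (the paper's Proposition~\ref{proprec}, taken from Proposition 10 of \cite{collevecchio2017branching}), and transience via the correlated percolation $\mathcal{C}_{CP}(\varrho)$, its quasi-independence (Lemma~\ref{lemma2}, which is where \eqref{condition1} enters), and a weighted second-moment argument (Proposition~\ref{proptrans}, following Appendix A.2 of \cite{collevecchio2018branching}). The only imprecision is that Lemma~\ref{lem:lemma10} gives $\Psi(e)$ as the \emph{exact} crossing probability of the extension ${\bf X}^{(e)}$ rather than of ${\bf X}$ itself, so the recurrence half also needs the coupling inclusion $\{T(e^+)<T(\varrho)\}\subseteq\{T^{(e)}(e^+)<T^{(e)}(\varrho)\}$ supplied by Rubin's construction — precisely the content of the cited proof.
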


In the following, we consider  the case $\mathcal{T}$ is spherically symmetric. 

\begin{lemma}\label{mainlemma}
Consider a $(\lambda_1, \lambda)$-OERW ${\bf X}$ on a spherically symmetric $\mathcal T$, with parameters $\lambda_1\geq 0$ and $\lambda>0$. Assume that there exists a constant $M>0$ such that $\sup_{\nu\in V}\deg \nu \leq M$. We have that
\begin{enumerate}
\item in the case $\lambda=1$, if  $\eta(\mathcal T, \lambda_1)<br_r(\mathcal T)$ then $RT(\mathcal{T},{\bf X})>1$ and if $\gamma(\mathcal T, \lambda_1)>br_r(\mathcal T)$ then $RT(\mathcal{T},{\bf X})<1$;
\item assume that $\lambda_1\geq 0$, $\lambda\neq 1$ and $br(\mathcal{T})>1$, if  $\beta(\mathcal{T}, \lambda_1, \lambda)<\frac{1}{br(\mathcal T)}$ then $RT(\mathcal{T},{\bf X})<1$ and if $\alpha(\mathcal{T}, \lambda_1, \lambda)>\frac{1}{br(\mathcal T)}$  then $RT(\mathcal{T},{\bf X})>1$.
\end{enumerate}
\end{lemma}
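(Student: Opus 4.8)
The plan is to exploit spherical symmetry to collapse everything onto a single scalar sequence. Since $\deg\nu$ depends only on $|\nu|$, the weight $\phi(e,\lambda_1,\lambda)$ depends only on $|e|$; write $\phi_k$ for its common value on edges of height $k$ and $\Psi_n=\prod_{k=1}^{n}\phi_k$ for the common value of $\Psi(e,\lambda_1,\lambda)$ on edges of height $n$. Each $\phi_k$ is a convex combination of $1$, $\psi(e,\lambda)\psi(e^{-1},\lambda)$ and $\psi(e,\lambda)$, hence $0<\phi_k\le 1$ and $\Psi_n$ is nonincreasing. First I would show that, for spherically symmetric trees, the cutset infimum defining $RT$ is attained on the level cutsets $\pi_n=\{e:\,|e|=n\}$. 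This is max-flow/min-cut on a tree: the symmetric flow that divides evenly among the children at each level has strength $\inf_n|\mathcal{T}_n|\,\Psi_n^{\gamma}$, which lower bounds the min-cut, while the level cutsets show the min-cut is at most $\inf_n|\mathcal{T}_n|\,\Psi_n^{\gamma}$. Hence
\[
RT(\mathcal{T},\mathbf{X})=\sup\Big\{\gamma>0:\ \liminf_{n\to\infty}|\mathcal{T}_n|\,\Psi_n^{\gamma}>0\Big\},
\]
so the statement becomes a comparison between the growth of $|\mathcal{T}_n|$ and the decay of $\Psi_n$.

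Consider first $\lambda=1$. Using $\psi(e,1)=(|e|-1)/|e|$ and the telescoping identity $\psi(e,1)\psi(e^{-1},1)=(|e|-2)/|e|$, a direct computation gives the closed form $\phi_k=1-\frac{(x_{k-1}-1)\lambda_1+2}{(1+x_{k-1}\lambda_1)\,k}$. Because $x_{k-1}\le M$, the numerator is bounded, so $\log\phi_k=-\frac{(x_{k-1}-1)\lambda_1+2}{(1+x_{k-1}\lambda_1)k}+O(k^{-2})$ with a summable error; thus $-\log\Psi_n$ differs from $\sum_{k}\frac{(x_{k-1}-1)\lambda_1+2}{(1+x_{k-1}\lambda_1)k}$ by a bounded amount, and dividing by $\log n$ I recover exactly $\gamma(\mathcal{T},\lambda_1)=\liminf_n\frac{-\log\Psi_n}{\log n}$ and $\eta(\mathcal{T},\lambda_1)=\limsup_n\frac{-\log\Psi_n}{\log n}$. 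Spherical symmetry also gives $br_r(\mathcal{T})=\liminf_n\frac{\log|\mathcal{T}_n|}{\log n}$. Now if $\eta<br_r$, choose $\gamma'\in(1,\,br_r/\eta)$: then $\liminf_n\frac{\log(|\mathcal{T}_n|\Psi_n^{\gamma'})}{\log n}\ge br_r-\gamma'\eta>0$, so $|\mathcal{T}_n|\Psi_n^{\gamma'}\to\infty$ and $RT>1$. Conversely if $\gamma>br_r$, passing to a subsequence along which $\frac{\log|\mathcal{T}_n|}{\log n}\to br_r$ forces $|\mathcal{T}_n|\Psi_n\to0$ there, so $\inf_n|\mathcal{T}_n|\Psi_n=0$ and $RT<1$ (strictness uses that $\frac{-\log\Psi_n}{\log n}$ is bounded, again via $M$).

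The case $\lambda\neq1$ is the exponential analogue. Here $\psi(e,\lambda)=\frac{\lambda^{|e|-1}-1}{\lambda^{|e|}-1}$ and $\psi(e,\lambda)\psi(e^{-1},\lambda)=\frac{\lambda^{|e|-2}-1}{\lambda^{|e|}-1}$, and I would substitute these into $\phi_k$, clear denominators, and extract the geometric rate of the partial products $\prod_{k\le n}\phi_k$. The crucial computation is to show that this rate agrees, up to subexponential factors, with that of $\prod_{i\le n}\frac{\lambda^2+(x_i-1)\lambda_1\lambda+\lambda_1}{1+x_i\lambda_1}$, so that $\alpha(\mathcal{T},\lambda_1,\lambda)=\liminf_n\Psi_n^{1/n}$ and $\beta(\mathcal{T},\lambda_1,\lambda)=\limsup_n\Psi_n^{1/n}$. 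Using $br(\mathcal{T})=\liminf_n|\mathcal{T}_n|^{1/n}$, the same liminf/limsup scheme yields $RT>1$ from $\alpha>1/br$ (the slowest rate already beats $1/br$, giving $|\mathcal{T}_n|\Psi_n^{\gamma'}\to\infty$ for some $\gamma'>1$) and $RT<1$ from $\beta<1/br$ (even the fastest rate falls below $1/br$, giving $|\mathcal{T}_n|\Psi_n\to0$ along a subsequence).

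The main obstacle is the purely algebraic identification in the case $\lambda\neq1$: reconciling $\phi_k$, which is built from $\psi(e,\lambda)$ and the cross term $\psi(e,\lambda)\psi(e^{-1},\lambda)$, with the clean factor $\frac{\lambda^2+(x_i-1)\lambda_1\lambda+\lambda_1}{1+x_i\lambda_1}$ that defines $\alpha$ and $\beta$; the telescoping of the $\psi\psi$ product and the asymptotics of $\frac{\lambda^{k}-1}{\lambda^{k}}$ must be tracked carefully, and one must verify that the boundary and lower-order factors do not perturb the exponential rate. A secondary but pervasive difficulty is the liminf/limsup bookkeeping: one has to pair the liminf quantity ($\alpha$, respectively the limsup quantity $\eta$) with the transient direction and the complementary extremum ($\beta$, respectively $\gamma$) with the recurrent direction, because the growth exponent of $\mathcal{T}$ itself may oscillate and only these one-sided comparisons are robust. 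Finally, both the reduction to level cutsets and the control of the error terms in $\log\phi_k$ rely on the uniform degree bound $\sup_\nu\deg\nu\le M$.
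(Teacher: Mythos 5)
Your overall architecture is viable and genuinely different from the paper's: you reduce $RT(\mathcal{T},{\bf X})$ to level cutsets by a flow/min-cut argument and then invoke the spherically symmetric identities $br(\mathcal{T})=\liminf_n|\mathcal{T}_n|^{1/n}$ and $br_r(\mathcal{T})=\liminf_n\frac{\log|\mathcal{T}_n|}{\log n}$, whereas the paper never leaves arbitrary cutsets: after computing $\Psi$ in closed form (Lemma~\ref{lem:computePhi}), it bounds $\sum_{e\in\pi}\Psi(e)^{1\pm\delta}$ above or below by $C\sum_{e\in\pi}\beta^{(1\mp\delta)^2|e|}$ (resp.\ by $C\sum_{e\in\pi}|e|^{-(1\pm\delta)^2\eta}$, etc.) and concludes directly from the cutset definitions \eqref{branchingdef}--\eqref{branchingdef2}. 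Both routes rest on the same closed form for $\Psi$ and on the degree bound, and your $\lambda=1$ case is essentially complete (the off-by-one shift between $x_{k-1}$ paired with $k$ in your $\Psi_n$ and $x_i$ paired with $i$ in \eqref{equaofgamma}--\eqref{equaofeta} only produces a bounded additive error, as you say).

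The genuine gap is in the step you defer as ``the crucial computation'': for $\lambda\neq1$, the identification $\alpha=\liminf_n\Psi_n^{1/n}$, $\beta=\limsup_n\Psi_n^{1/n}$ is \emph{false} when $\lambda>1$. In the paper's formula \eqref{equ:computePhi1}, write $\kappa_g=\frac{1+\partial(g^-)\lambda_1}{\lambda^2+(\partial(g^-)-1)\lambda_1\lambda+\lambda_1}$. For $\lambda<1$ the correction factors $\frac{1-\lambda^{|g|}\kappa_g}{1-\lambda^{|g|}}$ tend to $1$ geometrically fast, the correction product stays between two positive constants (this is where $\deg\nu\le M$ enters), and your identification is correct. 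But for $\lambda>1$ these factors tend to $\kappa_g$, which exactly cancels the ``clean'' factor $\frac{\lambda^2+(\partial(g^-)-1)\lambda_1\lambda+\lambda_1}{1+\partial(g^-)\lambda_1}$: hence the edge weights tend to $1$ geometrically, $\Psi_n$ converges to a positive constant, and $\Psi_n^{1/n}\to1$, while $\alpha,\beta>1$ (each clean factor exceeds $1$ for $\lambda>1$, since $\lambda^2+(x-1)\lambda_1\lambda+\lambda_1-(1+x\lambda_1)=(\lambda-1)\left(\lambda+1+(x-1)\lambda_1\right)>0$). So, run literally, your scheme computes the wrong rate in that regime. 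The repair is the case split the paper makes: for $\lambda>1$ the hypothesis $\beta<1/br(\mathcal{T})$ is vacuous and $\alpha>1/br(\mathcal{T})$ always holds, with $RT(\mathcal{T},{\bf X})=\infty>1$ following trivially from $\inf_n\Psi_n>0$. A second, smaller, repair: in both of your recurrence directions you conclude from $|\mathcal{T}_n|\Psi_n\to0$ along a subsequence, which only yields $RT\le1$; to get the strict inequality you must run the same estimate at an exponent $\gamma''<1$ --- possible precisely because the hypotheses $\gamma(\mathcal{T},\lambda_1)>br_r(\mathcal{T})$ and $\beta<1/br(\mathcal{T})$ are strict --- exactly as the paper does with its exponents $(1-\delta)$ and $(1-\delta)^2$.
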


Note that Theorem \ref{th:once-excited} is  a consequence of Theorem \ref{maintheorem} and Lemma \ref{mainlemma}.

\subsection{Main results about critical $M$-Digging random walk}
Let $M\in \mathbb{N}^*$, $\lambda>0$ and we consider the model $M$-DRW$_\lambda$ on an infinite, locally finite and rooted tree $\mathcal{T}$. In \cite{collevecchio2018branching}, Collevecchio-Huynh-Kious was proved that there is a phase transition with respect to the parameter $\lambda$, i.e there exists a critical parameter $\lambda_c$. A natural question that arises: what happens if $\lambda=\lambda_c$? As we said in the introduction, there is no a good answer for this question.\\

In \cite{basdevant2009recurrence}, Basdevant-Singh proved the critical $M$-digging random walk is recurrent on the regular trees. In this paper, we prove the critical $M$-digging random walk is still recurrent on a particular class of trees which contains the regular trees. 

\begin{theorem}
\label{thm:criticaldigging}
Let $M\in \mathbb{N}^*$ and $\mathcal{T}$ be a superperiodic tree whose upper-growth rate is finite. Then the  critical $M$-digging random walk on $\mathcal{T}$ is recurrent. 
\end{theorem}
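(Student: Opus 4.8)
\section*{Proof proposal for Theorem~\ref{thm:criticaldigging}}

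The plan is to extend the regular-tree argument of \cite{basdevant2009recurrence} by replacing the identity $|\mathcal T_n|=b^n$ with the superperiodic bound of Theorem~\ref{sousperiodic}, and to run it inside the Rubin-type framework of \cite{collevecchio2018branching}. Fix $M\in\mathbb{N}^*$, let $\lambda_c$ be the critical parameter of the $M$-digging walk furnished by the phase transition of \cite{collevecchio2018branching}, and take $\lambda=\lambda_c$. Write $b:=br(\mathcal T)$. Since $\mathcal T$ is superperiodic with $\overline{gr}(\mathcal T)<\infty$, Theorem~\ref{sousperiodic} gives $gr(\mathcal T)=b$ together with $|\mathcal T_n|\le b^{\,n+N}$; in particular $\mathcal T$ has bounded degree. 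For $n\ge 1$ put
\[
h_n:=\mathbb{P}\big(\mathbf X\text{ reaches }\mathcal T_n\text{ before time }T(\varrho)\big).
\]
On a locally finite tree $\{T(\varrho)=\infty\}$ entails that $\mathbf X$ meets every level before it could ever return to $\varrho$, so $\mathbb{P}(T(\varrho)=\infty)\le\lim_n h_n$; it therefore suffices to prove $h_n\to 0$.

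I would then import from \cite{collevecchio2018branching} the per-edge description of the walk obtained through Rubin's construction (the $M$-digging analogue of the functions recalled in Section~\ref{rubin}): to each edge $e$ there is attached an effective crossing weight $c(e)$ governing, level by level, the chance that $\mathbf X$ is pushed across a cutset, and the criticality $\lambda=\lambda_c$ is precisely the regime in which the exponential rate of these weights matches the branching number, i.e.\ $c(e)=b^{-|e|}s(e)$ with $s$ subexponential and bounded along every ray, say $s(e)\le C_0$. The superperiodic bound now does the decisive work: for the cutset $\pi=\mathcal T_n$,
\[
\sum_{e\in\mathcal T_n}c(e)\le |\mathcal T_n|\max_{|e|=n}c(e)\le b^{\,n+N}\cdot C_0\,b^{-n}=C_0\,b^{N}=:K,
\]
a bound independent of $n$, exactly because $gr(\mathcal T)=br(\mathcal T)$. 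Feeding these bounded cutset sums into the Nash--Williams-type resistance estimate of \cite{collevecchio2018branching} gives a lower bound of order $\sum_{k=1}^n\big(\sum_{e\in\mathcal T_k}c(e)\big)^{-1}\ge n/K$ on the resistance between $\varrho$ and $\mathcal T_n$, whence $h_n\le K/n\to 0$ and $\mathbf X$ is recurrent. Equivalently, the bound $|\mathcal T_n|\le b^{\,n+N}$ says that every level cutset of $\mathcal T$ is, up to the fixed factor $b^{N}$, no heavier than that of the regular tree of degree $b$, so the very estimate that yields recurrence in \cite{basdevant2009recurrence} survives on $\mathcal T$.

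The main obstacle is that the $M$-digging walk is self-interacting and non-reversible, so the electrical language above is only a guide and the Nash--Williams-type bound must be established directly for $\mathbf X$. As in \cite{collevecchio2017branching,collevecchio2018branching} I would work with the coupled continuous-time extensions of $\mathbf X$ produced by Rubin's construction and show that the events of crossing successive levels are quasi-independent (in the spirit of Lemma~\ref{lemma2}), so that the first-moment bound of the previous paragraph is not spoiled by the correlations created by the $M$ forced digging steps. Two points need care, and both are where the bounded-degree hypothesis enters: first, that the subexponential factor $s(e)$ stays bounded uniformly along every ray, which I would obtain from $\deg\nu\le M$ together with the injective morphisms $\mathcal T\hookrightarrow\mathcal T^\nu$ defining superperiodicity, comparing the digging cost inside each $\mathcal T^\nu$ with that in $\mathcal T$; and second, that $\lambda_c$ is exactly the value for which the Lyapunov exponent of the weights equals $-\log b$, so that the rate is $b^{-|e|}$ and cancels $|\mathcal T_n|$. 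Once these are in place, the constant cutset bound $K$ and the resulting linear growth of the resistance close the argument and recover \cite{basdevant2009recurrence} when $\mathcal T$ is regular.
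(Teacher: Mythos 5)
Your reduction to showing $h_n\to 0$, and your use of Theorem~\ref{sousperiodic} to turn $|\mathcal T_n|\le br(\mathcal T)^{n+N}$ into a uniformly bounded level sum $\sum_{e\in\mathcal T_n}c(e)\le K$, are both sound, and this is indeed how superperiodicity enters the paper's argument (it is exactly the finite-Hausdorff-measure hypothesis of Corollary~\ref{cor:reccrit}, with gauge $\phi(n)=br(\mathcal T)^{-n}$). The gap is the step you call the ``Nash--Williams-type resistance estimate,'' i.e.\ the inequality $h_n\le K/n$. For $M\ge 1$ the $M$-digging walk is self-interacting and non-reversible, so it is not the walk associated with any electrical network, and the series law behind Nash--Williams (resistances of nested cutsets add) has no analogue; the paper's final remark is explicit that the Nash--Williams route is available only for $M=0$. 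Nothing in your proposal fills this hole: quasi-independence in the sense of Lemma~\ref{lemma2} is an \emph{upper} bound on joint connection probabilities, which is precisely what the second-moment method needs to prove survival (transience); it yields no improvement on the first-moment bound $h_n\le\sum_{e\in\mathcal T_n}\Psi(e)\le K$ in the recurrence direction, and at criticality a bound that is merely bounded, rather than vanishing, is exactly where first moments get stuck. Nor can one compound per-level escape probabilities across cutsets, since the crossing events are not independent and FKG is unavailable for this correlated percolation. So the central quantitative claim of your proof is unsupported, and no amount of sharpening the cutset estimates alone can supply it.

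What the paper does instead is qualitatively different and is worth internalizing. It proves a zero-or-infinity law for the number of surviving rays of the percolation $\mathcal C_{CP}(\varrho)$ (Proposition~\ref{pro:criticalcase2}(1)): conditionally on the event $\mathcal B_{n,\gamma}$ that the limit walk $\omega^\infty$ starts with a given path $\gamma$ of length $n$, each of the $n$ disjoint subtrees hanging off that path spawns its own surviving ray with probability at least a constant $c>0$ uniformly in $n$, $\gamma$ and the cookie environment left behind by the walk. This is where superperiodicity really acts, through \emph{uniform transience} of the subtrees $\mathcal T^{v}$ together with the cookie-comparison Lemma~\ref{lem:comparecookies}; it gives
\begin{equation}
\mathbb{P}\bigl(\mathcal A_k \mid \mathcal B_{n,\gamma}\bigr)\le \binom{n}{k}(1-c)^{n-k}\longrightarrow 0 ,
\end{equation}
hence $\mathbb{P}(\mathcal A_k)=0$ for every finite $k\ge 1$. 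Your bounded level sums are then used in the only way first moments can be used: the boundary $\partial\mathcal T$ has finite Hausdorff measure in the gauge $\phi(n)=\mathbb{P}(\varrho\leftrightarrow v)$, so the expected number of surviving rays is finite, hence the number of surviving rays is a.s.\ finite, hence by the dichotomy a.s.\ zero, and recurrence follows. In short, your two correct ingredients (the superperiodic cutset bound and the comparison of subtrees via the morphisms) need to be reassembled around a ray-counting dichotomy in the style of \cite{pemantle1995critical}, not around a resistance identity that does not exist for this walk.
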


\section{An example}
\label{section:example}

In this section, we give an example to prove that the phase transition of once-excited random walk $(\lambda_1, \lambda)-OERW$ on a tree $\mathcal T$ does not depend only on the branching-ruin number and the branching number of $\mathcal{T}$.
\bigskip

If $\mathcal{T}$ is a spherically symmetric tree, recall that $x_n(\mathcal{T})$ is the number of children of a vertex at level $n$. 

Let $\mathcal{T}$ (resp. $\mathcal{\widetilde{T}}$) be a spherically symmetric such that for any $n\geq 0$, we have $x_n(\mathcal{T})=2$ (resp. $x_n(\mathcal{\widetilde{T}})=1$ if $n$ is odd and $x_n(\mathcal{\widetilde{T}})=4$ if not). Then we obtain :

\begin{align}
br(\mathcal{T})=br(\mathcal{\widetilde{T}})=2.\\
br_r(\mathcal{T})=br_r(\mathcal{\widetilde{T}})=\infty.
\end{align}

\begin{lemma}
Consider a $(1, (\sqrt{3}-1)/2)$-OERW \,$\bf X$ (resp. $\bf \widetilde{X}$) on $\mathcal{T}$ (resp. $\mathcal {\widetilde{T}}$). Then $\bf X$ is recurrent, but $\bf \widetilde{X}$ is transient. 
\end{lemma}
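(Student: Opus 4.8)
The plan is to apply part~(2) of Lemma~\ref{mainlemma} together with Theorem~\ref{maintheorem}, which is the relevant regime here since $\lambda=(\sqrt3-1)/2\neq1$. Both trees have branching number $2$, so the threshold to beat is $1/br=1/2$. The whole example hinges on the observation that $(\sqrt3-1)/2$ is \emph{exactly} the critical bias on the binary tree: writing $h(x)=\frac{\lambda^2+(x-1)\lambda_1\lambda+\lambda_1}{1+x\lambda_1}$ for the per-level factor appearing in \eqref{equaofalpha}--\eqref{equaofbeta}, one has with $\lambda_1=1$ the identity $h(x)=\lambda+\frac{(\lambda-1)^2}{x+1}$, and the equation $h(2)=1/2$ reduces to $2\lambda^2+2\lambda-1=0$, whose positive root is precisely $\lambda=(\sqrt3-1)/2$.

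First I would treat $\mathcal{T}$. Since $x_i=2$ for every $i$, each factor equals $h(2)=1/2$, so $\alpha(\mathcal{T},\lambda_1,\lambda)=\beta(\mathcal{T},\lambda_1,\lambda)=1/2=1/br(\mathcal{T})$. We are thus exactly on the critical line, and the strict inequalities in Lemma~\ref{mainlemma}(2) yield no conclusion. At this point I would invoke the recurrence-at-criticality result for regular trees (Corollary~1.6 of~\cite{basdevant2009recurrence}): the binary tree is regular, so $\mathbf{X}$ is recurrent.

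Next I would treat $\widetilde{\mathcal{T}}$, where $x_i=4$ for $i$ even and $x_i=1$ for $i$ odd. Here the factors alternate between $h(4)=\frac{2\sqrt3+1}{10}$ and $h(1)=\frac{4-\sqrt3}{4}$, hence the sequence of factors is periodic with period $2$; since all factors lie in a fixed compact subset of $(0,\infty)$, the $\liminf$ and $\limsup$ in \eqref{equaofalpha}--\eqref{equaofbeta} coincide and equal the geometric mean over one period,
\[
\alpha(\widetilde{\mathcal{T}},\lambda_1,\lambda)=\beta(\widetilde{\mathcal{T}},\lambda_1,\lambda)=\sqrt{h(1)\,h(4)}=\sqrt{\frac{(4-\sqrt3)(2\sqrt3+1)}{40}}=\sqrt{\frac{7\sqrt3-2}{40}}.
\]
Comparing with $1/2$ amounts to the elementary inequality $7\sqrt3-2>10$, i.e.\ $7\sqrt3>12$, i.e.\ $147>144$, which holds. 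Hence $\alpha(\widetilde{\mathcal{T}},\lambda_1,\lambda)>1/2=1/br(\widetilde{\mathcal{T}})$, and since condition~\eqref{condition1} holds (the degrees of $\widetilde{\mathcal{T}}$ are bounded by $5$), Lemma~\ref{mainlemma}(2) gives $RT(\widetilde{\mathcal{T}},\widetilde{\mathbf{X}})>1$ and Theorem~\ref{maintheorem} then yields that $\widetilde{\mathbf{X}}$ is transient.

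The conceptual point, and the only genuine subtlety, is that $\mathcal{T}$ sits exactly on the critical line $\alpha=\beta=1/br$; this is why recurrence on the binary tree lies outside the scope of Theorem~\ref{maintheorem} and must be imported from the regular-tree criticality result of~\cite{basdevant2009recurrence}. What makes the example work is that passing from the branching pattern $(2,2)$ to $(4,1)$ leaves the product $4=2\cdot2$, and hence the branching number, unchanged, yet strictly raises the geometric mean of the per-level factor $h$ when $\lambda\neq1$; the size of this gain is exactly the strict inequality $7\sqrt3-2>10$, which pushes $\widetilde{\mathcal{T}}$ into the transient regime while $\mathcal{T}$ remains critical.
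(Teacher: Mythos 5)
Your proposal is correct and follows essentially the same route as the paper: recurrence of $\mathbf{X}$ on the binary tree $\mathcal{T}$ is imported from Corollary~1.6 of \cite{basdevant2009recurrence} (the walk being exactly critical there), and transience of $\widetilde{\mathbf{X}}$ follows from checking $\alpha(\widetilde{\mathcal{T}},1,\lambda)=\beta(\widetilde{\mathcal{T}},1,\lambda)>\frac12=1/br(\widetilde{\mathcal{T}})$ and invoking the paper's criterion (Theorem~\ref{th:once-excited}, equivalently Lemma~\ref{mainlemma} plus Theorem~\ref{maintheorem}). The only difference is that you carry out explicitly the ``simple computation'' the paper omits, including the verification $7\sqrt3-2>10$, and correctly identify why the binary-tree case sits precisely at criticality.
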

\begin{proof}
Note that $\mathcal{T}$ is a binary tree, then we can apply Corollary 1.6 of \cite{basdevant2009recurrence} to imply that $\bf X$ is recurrent. On the other hand, by a simple computation we have
\begin{equation}
\label{equ:aaabbbccc1}
\alpha\left(\mathcal {\widetilde{T}},1,\frac{\sqrt{3}-1}{2}\right)=\beta\left(\mathcal {\widetilde{T}},1,\frac{\sqrt{3}-1}{2}\right)>\frac{1}{2}.
\end{equation}
By Theorem~\ref{th:once-excited} and \ref{equ:aaabbbccc1}, we obtain $\bf \widetilde{X}$ is transient. 
\end{proof}

\section{Proof of Theorem \ref{thm:once-excitedbis}}
\begin{lemma}
\label{lem:br_rT=infinity}
Let $\mathcal{T}$ be an infinite, locally finite and rooted tree. If $br(\mathcal{T})>1$ then $br_r(\mathcal{T})=+\infty$. 
\end{lemma}

\begin{proof}
See (\cite{collevecchio2018branching}, proof of Lemma 8, Case V). 
\end{proof}

\begin{lemma}
Let $(\lambda_1,...,\lambda_M)\in \left(\mathbb{R}_+\right)^M$ and $\mathcal{T}$ be an infinite, locally finite and rooted tree. If $M$-DRW$_{1}$ is transient, then $(\lambda_1,...,\lambda_M,1)$-ERW is transient. 
\end{lemma}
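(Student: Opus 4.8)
The plan is to prove the stronger monotonicity statement that the probability of never returning to the root, $\mathbb{P}(T(\varrho)=\infty)$, is non-decreasing when one replaces each cookie strength $0$ (the digging walk) by an arbitrary $\lambda_j\ge 0$ (the excited walk), all other features of the two walks being identical. Indeed, by the very definitions, $M$-DRW$_{1}$ and $(\lambda_1,\dots,\lambda_M,1)$-ERW behave exactly alike — as a simple random walk — after the $M$-th visit to any site, and differ only on the first $M$ visits to a site $\nu$: there the digging walk steps to the parent $\nu^{-1}$ with probability $1$, whereas the excited walk steps to a given child with the nonnegative probability $\lambda_j/(1+\partial(\nu)\lambda_j)$ and to the parent with the smaller probability $1/(1+\partial(\nu)\lambda_j)$. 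Thus, visit by visit, the excited walk is at least as likely to move away from the root as the digging walk, and the whole point is to turn this pointwise comparison into a comparison of the events $\{T(\varrho)=\infty\}$. The very first step out of $\varrho$ is a degenerate case, since $\varrho$ has no parent; I use the convention that both walks then choose a uniform child, so this step is common to both and does not affect the comparison.

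First I would set up a common probability space through the exit-sequence (cookie-stack) representation underlying Rubin's construction of Section~\ref{rubin}. To each vertex $\nu$ and each integer $k\ge 1$ attach an independent random variable $\xi^{\nu}_k$ recording the direction of the $k$-th exit from $\nu$, with law prescribed by the relevant rule: for $k>M$ it is the simple-random-walk choice, and for $k\le M$ it is the cookie-$\lambda_k$ choice for the excited walk, respectively the deterministic choice ``parent'' for the digging walk. Since the decision made on the $k$-th visit to $\nu$ depends only on $k$ and on $\partial(\nu)$ and is independent of everything else, each walk is an explicit deterministic functional of its family $(\xi^{\nu}_k)_{\nu,k}$. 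I would then couple the two families so that the simple-random-walk directions ($k>M$) coincide, and so that in the cookie range ($k\le M$) the excited walk's exit is ``at least as far from the root'' as the digging walk's — which here is automatic, because the digging walk always exits towards the parent.

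The key step is to show that, under this coupling, the no-return event is monotone non-decreasing when the cookie-phase exits are made more downward, so that $\{T(\varrho)=\infty\}$ for the digging walk forces $\{T(\varrho)=\infty\}$ for the excited walk. The natural target invariant is that the digging walk $\mathbf{Y}$ always stays an ancestor of the excited walk $\mathbf{X}$ in the tree order, $\mathbf{Y}_n\le \mathbf{X}_n$: since $\varrho$ is minimal, this forces every visit of $\mathbf{X}$ to $\varrho$ to occur together with a visit of $\mathbf{Y}$ to $\varrho$, whence recurrence of $\mathbf{X}$ would entail recurrence of $\mathbf{Y}$, i.e.\ $\{\mathbf{Y}\text{ escapes}\}\subseteq\{\mathbf{X}\text{ escapes}\}$ and $\mathbb{P}^{\mathbf{X}}(T(\varrho)=\infty)\ge \mathbb{P}^{\mathbf{Y}}(T(\varrho)=\infty)$.

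The main obstacle is precisely the maintenance of such a domination, because the two walks desynchronize: once $\mathbf{X}$ has stepped to a child where $\mathbf{Y}$ stepped to the parent, the two sit at different vertices with different visit counts, and a naive synchronous coupling breaks down as soon as $\mathbf{Y}$ later descends into a subtree not containing $\mathbf{X}$. To overcome this I would run the coupling asynchronously and argue by induction on the tree through the recursive excursion structure: conditioning on the successive excursions of each walk into the subtrees $\mathcal{T}^{\nu}$ hanging below a given vertex, I would show that the probability that a given excursion escapes (never returns above that vertex) is non-decreasing in the cookie strengths, the delicate bookkeeping being that cookies are consumed across excursions while both walks nevertheless coincide, as simple random walks, once the $M$ cookies at each site are exhausted. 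Carrying this monotonicity down a cutset and letting the cutset tend to infinity would yield $\mathbb{P}^{\mathbf{X}}(T(\varrho)=\infty)\ge \mathbb{P}^{\mathbf{Y}}(T(\varrho)=\infty)>0$, which is the asserted transience of the excited walk. The chief technical difficulty, and where most of the work lies, is controlling this cookie-consumption desynchronization so that the excursion-wise monotonicity survives the induction.
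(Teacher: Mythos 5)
Your intuition is the right one and the statement is true, but what you have written is not a proof: its crux --- that the excursion-wise escape probability is monotone in the cookie strengths \emph{despite} the fact that the two walks consume cookies along different histories --- is exactly the content of the lemma, restated in a strengthened form (it would have to hold for arbitrary pairs of partially-consumed environments), and you explicitly leave it unproved (``where most of the work lies''). So the proposal is essentially circular: it reduces the lemma to an unproved claim of the same nature and difficulty. For comparison, the paper does not reprove this lemma at all; its proof is a citation to Basdevant--Singh \cite{basdevant2009recurrence}, Section 3, where the comparison with the always-backtracking (digging) walk is actually carried out, and carried out specifically for walks that move as \emph{simple} random walks after excitation. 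Your sketch never uses the hypothesis $\lambda=1$ anywhere; if cookie-monotonicity followed from soft coupling considerations uniformly in the base bias, the restriction to $\lambda=1$ in the statement would be pointless, and this alone should make you suspicious of the plan.

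Two concrete points show the gap is real and not mere bookkeeping. First, your ``natural target invariant'' $\mathbf{Y}_n\le\mathbf{X}_n$ is not just hard to maintain; it is impossible for \emph{any} coupling. Take $\mathcal{T}$ binary, $M=1$, $\lambda_1=1$, $\lambda=1$. Two vertices at height $1$ are comparable only if equal, so the invariant forces $\mathbf{X}_1=\mathbf{Y}_1=:\nu$; the digging walk then deterministically returns, $\mathbf{Y}_2=\varrho$. With conditional probability $2\lambda_1/(1+2\lambda_1)=2/3$ the excited walk instead jumps to a child $\mu$ of $\nu$, and on that event $\mathbf{X}_3$ is a neighbor of $\mu$, hence lies in $\mathcal{T}^{\nu}$, so the invariant forces $\mathbf{Y}_3=\nu$. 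This gives $\mathbb{P}\left(\mathbf{Y}_3=\nu\mid \mathbf{Y}_{0:2}=(\varrho,\nu,\varrho)\right)\ge 2/3$, whereas the $1$-DRW$_{1}$, having eaten its cookie at $\varrho$ at time $0$, must choose each child of $\varrho$ with probability $1/2$ at time $3$: contradiction with the marginal law. So any correct argument must abandon pathwise domination, and your ``asynchronous excursion comparison'' is precisely where a genuinely new idea is required. Second, the natural induction hypothesis for that comparison --- pointwise domination of the remaining cookie stacks --- can genuinely fail when some $\lambda_j<1$: if at some vertex the digging walk has already consumed more cookies than the excited walk (which happens, since the digging walk backtracks and revisits vertices much more), then at that vertex the digging walk already moves as SRW (strength $1$) while the excited walk still holds a cookie of strength $\lambda_j<1$, i.e.\ the excited walk is locally \emph{less} transient there. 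This is exactly the desynchronization you flag, and it breaks the monotone structure your induction would need; handling it is the substance of the cited proof, not a detail to be deferred.
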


\begin{proof}
See (\cite{basdevant2009recurrence}, Section 3).
\end{proof}

\begin{remark}
\label{rem:abcd1}
Let $T(\varrho)$ (resp. $S(\varrho)$) the return of of  $M$-DRW$_{1}$ (resp. $(\lambda_1,...,\lambda_M,1)$-ERW) to the root $\varrho$ of $\mathcal{T}$. It is simple to see that 
\begin{equation}
\mathbb{P}(T(\varrho)<\infty)\leq \mathbb{P}(S(\varrho)<\infty). 
\end{equation}
\end{remark}

\begin{proposition}
\label{prop:once-excitedbis}
Let $(\lambda_1,...,\lambda_M)\in (\mathbb{R}_+)^M$ and consider $(\lambda_1,...,\lambda_M,1)$-ERW $\bf X$ on an infinite, locally finite, rooted tree $\mathcal T$. If $br(\mathcal T)> 1$, then $\bf X$ is transient. 
\end{proposition}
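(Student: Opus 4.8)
The plan is to deduce Proposition~\ref{prop:once-excitedbis} by chaining together the three results immediately preceding it in this section, reducing the claim about the general multi-excited walk to the already-established behavior of the digging random walk at parameter~$1$. Concretely, I would argue as follows. Since $br(\mathcal{T})>1$ by hypothesis, Lemma~\ref{lem:br_rT=infinity} gives $br_r(\mathcal{T})=+\infty$. This is the key structural input: having branching number strictly above one forces the tree to be ``fat'' enough that its branching-ruin number is infinite.

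Next I would invoke the known phase-transition result for the digging random walk. The critical threshold for $M$-DRW$_\lambda$ is governed by the branching-ruin number of $\mathcal{T}$ (this is the content of the Collevecchio--Huynh--Kious analysis in \cite{collevecchio2018branching}), and since $br_r(\mathcal{T})=+\infty$, the walk $M$-DRW$_{1}$ sits strictly below criticality and is therefore transient. The point is that $\lambda=1$ together with $br_r(\mathcal{T})=\infty$ places us well inside the transient phase regardless of the value of $M$.

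Having established transience of $M$-DRW$_{1}$, I would apply the second lemma of this section: if $M$-DRW$_{1}$ is transient, then $(\lambda_1,\ldots,\lambda_M,1)$-ERW is transient. This is exactly the stochastic-domination statement, and Remark~\ref{rem:abcd1} makes it transparent by comparing return times, $\mathbb{P}(T(\varrho)<\infty)\le \mathbb{P}(S(\varrho)<\infty)$, so that transience of the digging walk (the left-hand event having probability strictly less than one) transfers to transience of the excited walk. Since $\mathbf{X}$ is precisely a $(\lambda_1,\ldots,\lambda_M,1)$-ERW, this completes the argument.

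The whole proof is thus a short syllogism: $br(\mathcal{T})>1 \Rightarrow br_r(\mathcal{T})=\infty \Rightarrow M\text{-DRW}_1$ transient $\Rightarrow (\lambda_1,\ldots,\lambda_M,1)$-ERW transient. The only genuinely nontrivial content has been pushed into the cited lemmas, so there is no real obstacle at the level of this proposition itself; the subtle step—and the one I would double-check—is the comparison lemma asserting that transience of the post-excitation simple random walk (digging walk) dominates the full excited walk, since a priori the finitely many excited cookies with biases $\lambda_i$ could push the walker back toward the root. The justification is that the digging walk is the \emph{least} favorable comparison (its excited steps carry no outward bias, $\lambda_i=0$), so any genuine $(\lambda_1,\ldots,\lambda_M,1)$-ERW is at least as transient; I would rely on the coupling in \cite{basdevant2009recurrence} for this monotonicity rather than reproving it here.
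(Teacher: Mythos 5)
Your proof is correct and follows essentially the same route as the paper: both arguments chain Lemma~\ref{lem:br_rT=infinity} ($br(\mathcal{T})>1\Rightarrow br_r(\mathcal{T})=\infty$), the phase-transition result for the digging random walk (Theorem 2 of \cite{collevecchio2018branching}, giving transience of $M$-DRW$_1$), and the comparison lemma from \cite{basdevant2009recurrence} asserting that transience of $M$-DRW$_1$ implies transience of the $(\lambda_1,\ldots,\lambda_M,1)$-ERW. Your additional caution about the monotonicity step is reasonable, and the paper handles it exactly as you do, by delegating it to the cited comparison result.
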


\begin{proof}
Note that if $\lambda_i=0$ for all $1\leq i\leq M$ and $\lambda=1$, then $\bf X$ is a $M$-digging random walk with parameter $1$ ($M$-DRW$_{1}$). On the other hand, we have  $(\lambda_1,...,\lambda_M,1)$-ERW is more transient than $M$-DRW$_{1}$, i.e if $M$-DRW$_{1}$ is transient then $(\lambda_1,...,\lambda_M,1)$-ERW is transient. We complete the proof by using Lemma~\eqref{lem:br_rT=infinity} and Theorem 2 in \cite{collevecchio2018branching}. 
\end{proof}

\section{Proof of Lemma \ref{mainlemma} and  Theorem \ref{th:once-excited}}

In this section, we prove Lemma \ref{mainlemma}. Theorem \ref{th:once-excited} then trivially follows from Theorem \ref{maintheorem}. 
\begin{lemma}
\label{lem:computePhi}
Recall the definition of $\Psi(e,\lambda_1,\lambda)$ as in \ref{Psi}. We have that, if $\lambda\neq 1$, for any $|e|>1$, 
\begin{equation}
\label{equ:computePhi1}
 \Psi(e,\lambda_1,\lambda)=\left(\prod_{g\leq e, \, |g|>1}\frac{\lambda^2+(\partial(g^-)-1)\lambda_1\lambda+\lambda_1}{1+\partial(g^-)\lambda_1}\right) \prod_{g\leq e, \, |g|>1}\left(\frac{1-\lambda^{|g|}\left(\frac{1+\partial(g^-)\lambda_1}{\lambda^2+(\partial(g^-)-1)\lambda_1\lambda+\lambda_1}\right)}{1-\lambda^{|g|}}\right).
\end{equation}
and if $\lambda=1$, for any $|e|>1$, 
\begin{equation}
\label{equ:computePhi2}
\Psi(e,\lambda_1,\lambda)=\prod_{g\leq e, \, |g|>1}\left(1-\frac{(\partial(g^-)-1)\lambda_1+2}{|g|\left(1+\partial(g^-)\lambda_1\right)} \right).
\end{equation}

\end{lemma}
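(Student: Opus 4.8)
The goal is to compute the product $\Psi(e,\lambda_1,\lambda)=\prod_{g\le e}\phi(g,\lambda_1,\lambda)$ explicitly, so everything reduces to simplifying the single factor $\phi(g,\lambda_1,\lambda)$ for a fixed edge $g$ with $|g|>1$, and then recognizing the telescoping structure that appears when we take the product over $g\le e$. Let me write $n=|g|$, $d=\partial(g^-)$, and recall that $g^{-1}$ denotes the parent edge of $g$, so $|g^{-1}|=n-1$.

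Let me first treat the case $\lambda\neq 1$. Using the definition \eqref{psi}, I have
\[
\psi(g,\lambda)=\frac{\lambda^{n-1}-1}{\lambda^{n}-1},\qquad \psi(g^{-1},\lambda)=\frac{\lambda^{n-2}-1}{\lambda^{n-1}-1},
\]
so the product telescopes: $\psi(g,\lambda)\psi(g^{-1},\lambda)=\dfrac{\lambda^{n-2}-1}{\lambda^{n}-1}$. Substituting into the definition \eqref{phi} of $\phi$ and clearing the common denominator $1+d\lambda_1$, I expect
\[
\phi(g,\lambda_1,\lambda)=\frac{1}{1+d\lambda_1}\left(\lambda_1+\frac{\lambda^{n-2}-1}{\lambda^{n}-1}+(d-1)\lambda_1\cdot\frac{\lambda^{n-1}-1}{\lambda^{n}-1}\right).
\]
The plan is to bring the three terms inside the bracket over the common denominator $\lambda^n-1$ and collect the numerator as a polynomial in $\lambda$. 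The claimed right-hand side of \eqref{equ:computePhi1} suggests the natural regrouping: the leading constant $\frac{\lambda^2+(d-1)\lambda_1\lambda+\lambda_1}{1+d\lambda_1}$ should factor out, leaving the correction factor $\frac{1-\lambda^{n}\left(\frac{1+d\lambda_1}{\lambda^2+(d-1)\lambda_1\lambda+\lambda_1}\right)}{1-\lambda^{n}}$. Concretely, I would verify the algebraic identity
\[
\lambda_1(\lambda^n-1)+(\lambda^{n-2}-1)+(d-1)\lambda_1(\lambda^{n-1}-1)
=\bigl(\lambda^2+(d-1)\lambda_1\lambda+\lambda_1\bigr)\frac{\lambda^{n-2}-1}{\cdots}
\]
by matching the coefficient of $\lambda^{n}$ and the constant term on both sides; this is the one genuinely load-bearing computation, and I expect the matching of the $\lambda^{n-2}$, $\lambda^{n-1}$, $\lambda^{n}$ coefficients to pin down the factorization exactly, confirming that each $\phi(g,\lambda_1,\lambda)$ splits as a ``bias'' part times a ``finite-size correction'' part. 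Taking the product over all $g\le e$ with $|g|>1$ then gives \eqref{equ:computePhi1} directly, since the product of the first factors and the product of the second factors are exactly the two displayed products.

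For the case $\lambda=1$, I would proceed in parallel but using the second line of \eqref{psi}, namely $\psi(g,\lambda)=\frac{n-1}{n}$ and $\psi(g^{-1},\lambda)=\frac{n-2}{n-1}$, so that $\psi(g,\lambda)\psi(g^{-1},\lambda)=\frac{n-2}{n}$. Substituting into \eqref{phi},
\[
\phi(g,\lambda_1,1)=\frac{1}{1+d\lambda_1}\left(\lambda_1+\frac{n-2}{n}+(d-1)\lambda_1\cdot\frac{n-1}{n}\right),
\]
and collecting over the denominator $n(1+d\lambda_1)$ I expect the numerator to simplify to $n(1+d\lambda_1)-\bigl((d-1)\lambda_1+2\bigr)$, yielding precisely $\phi(g,\lambda_1,1)=1-\frac{(d-1)\lambda_1+2}{n(1+d\lambda_1)}$. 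Taking the product over $g\le e$ with $|g|>1$ then gives \eqref{equ:computePhi2}. Note that this is consistent with taking the limit $\lambda\to 1$ in \eqref{equ:computePhi1}, which provides a useful cross-check.

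The main obstacle is purely the bookkeeping in the $\lambda\neq 1$ case: one must verify that the numerator polynomial, after clearing denominators, factors in the asserted way and that the residual factor is exactly $1-\lambda^n\cdot\frac{1+d\lambda_1}{\lambda^2+(d-1)\lambda_1\lambda+\lambda_1}$ over $1-\lambda^n$. There is no conceptual difficulty — the telescoping of the $\psi$ factors does all the real work and the rest is matching polynomial coefficients — but care is needed to keep $\partial(g^-)$ (which varies with $g$ in the general, non-spherically-symmetric setting) inside each factor rather than factoring it out of the product prematurely.
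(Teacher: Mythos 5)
Your $\lambda=1$ computation is correct and coincides with the paper's. The gap is in the $\lambda\neq 1$ case: the one step you yourself call ``the load-bearing computation'' is never carried out (your displayed identity even ends in a placeholder $\cdots$), and if you do carry it out with the inputs you set up, it fails. Taking \eqref{psi} literally, so that $\psi(g,\lambda)=\frac{\lambda^{|g|-1}-1}{\lambda^{|g|}-1}$ and the telescoped product is $\psi(g,\lambda)\psi(g^{-1},\lambda)=\frac{\lambda^{|g|-2}-1}{\lambda^{|g|}-1}$, one gets, writing $d=\partial(g^-)$ and $n=|g|$,
\begin{equation*}
\lambda_1+\frac{\lambda^{n-2}-1}{\lambda^{n}-1}+(d-1)\lambda_1\,\frac{\lambda^{n-1}-1}{\lambda^{n}-1}
=\frac{\lambda^{n-2}\bigl(\lambda_1\lambda^{2}+(d-1)\lambda_1\lambda+1\bigr)-(1+d\lambda_1)}{\lambda^{n}-1},
\end{equation*}
whereas \eqref{equ:computePhi1} requires this bracket to equal $\frac{\lambda^{n}(1+d\lambda_1)-(\lambda^{2}+(d-1)\lambda_1\lambda+\lambda_1)}{\lambda^{n}-1}$. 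These are genuinely different polynomial expressions (the coefficients $\lambda^2$ and $\lambda_1$ end up swapped): for instance with $\lambda=2$, $\lambda_1=1$, $d=2$, $n=3$ your per-edge factor equals $11/21$, while the factor appearing in \eqref{equ:computePhi1} equals $17/21$. So the factorization you ``expect'' to verify is false, and your route cannot be completed to the stated conclusion.

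What you are missing is that the paper's own proof does not use \eqref{psi} as printed. It substitutes the gambler's-ruin expressions $\psi(g,\lambda)=\frac{1-(1/\lambda)^{|g|-1}}{1-(1/\lambda)^{|g|}}=\frac{\lambda^{|g|}-\lambda}{\lambda^{|g|}-1}$ and $\psi(g,\lambda)\psi(g^{-1},\lambda)=\frac{\lambda^{|g|}-\lambda^{2}}{\lambda^{|g|}-1}$, i.e.\ the probability that the $\lambda$-biased walk started at $g^-$ hits $g^+$ before $\varrho$ — the meaning $\psi$ must have for Lemma \ref{lem:lemma10} to hold. These differ from the printed \eqref{psi} by a factor of $\lambda$ (the printed formula is the intended one evaluated at $1/\lambda$), and with them the bracket collects exactly to $\frac{\lambda^{|g|}(1+d\lambda_1)-(\lambda^{2}+(d-1)\lambda_1\lambda+\lambda_1)}{\lambda^{|g|}-1}$, giving \eqref{equ:computePhi1}. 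At $\lambda=1$ the two candidate formulas for $\psi$ coincide, which is precisely why your second case goes through while your first cannot. Finally, note that your proposed $\lambda\to1$ cross-check would not have detected the problem: the (incorrect) expression your computation produces has the same $\lambda\to1$ limit as \eqref{equ:computePhi1}, so consistency with \eqref{equ:computePhi2} in the limit does not validate the $\lambda\neq1$ identity.
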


\begin{proof}
We compute the quantity $\Psi(e,\lambda, \lambda_1)$ by using \eqref{psi}, \ref{phi} and \eqref{Psi}. We will proceed by distinguishing two cases.\\
\noindent
{\bf Case I: $\lambda\neq 1$.}\\
By \eqref{psi}, \ref{phi} and \eqref{Psi}, we have

$$ \Psi(e,\lambda_1,\lambda)=\prod_{g\leq e ,\,  |g|>1}\phi(g,\lambda_1,\lambda)$$
$$=\prod_{g\leq e, \, |g|>1}\left(\frac{\lambda_1}{1+\partial(g^-)\lambda_1}+\frac{1}{1+\partial(g^-)\lambda_1}\psi(e,\lambda)\psi(e^{-1},\lambda)+\frac{(\partial(g^-)-1)\lambda_1}{1+\partial(g^-)\lambda_1}\psi(e,\lambda)\right)$$

$$=\left(\prod_{g\leq e, \, |g|>1}\frac{1}{1+\partial(g^-)\lambda_1}\right) \prod_{g\leq e, \, |g|>1}\left(\lambda_1+\psi(e,\lambda)\psi(e^{-1},\lambda)+(\partial(g^-)-1)\lambda_1\psi(e,\lambda)\right) $$

By~\ref{psi}, we have:

\begin{equation}
\begin{split}
&\lambda_1+\psi(g,\lambda)\psi(g^{-1},\lambda)+(\partial(g^-)-1)\lambda_1\psi(g,\lambda) \\
=&\lambda_1+\left(\frac{1-(1/\lambda)^{|g|-2}}{1-(1/\lambda)^{|g|}}\right)+\left((\partial(g^-)-1)\lambda_1 \frac{1-(1/\lambda)^{|g|-1}}{1-(1/\lambda)^{|g|}}\right)\\
=&\lambda_1+\left(\frac{\lambda^{|g|}-\lambda^2}{\lambda^{|g|}-1}\right)+(\partial(g^-)-1)\lambda_1 \left(\frac{\lambda^{|g|}-\lambda}{\lambda^{|g|}-1}\right)\\
{=}&\frac{\lambda^2+(\partial(g^-)-1)\lambda_1\lambda+\lambda_1-\lambda^{|g|}\left(1+\partial(g^-)\lambda_1\right)}{1-\lambda^{|g|}}  \\
=&\left(\lambda^2+(\partial(g^-)-1)\lambda_1\lambda+\lambda_1\right) \left(\frac{1-\lambda^{|g|}\left(\frac{1+\partial(g^-)\lambda_1}{\lambda^2+(\partial(g^-)-1)\lambda_1\lambda+\lambda_1}\right)}{1-\lambda^{|g|}}\right).
\end{split}
\end{equation}
Therefore we obtain~\ref{equ:computePhi1}.

\noindent

{\bf Case II: $\lambda=1$.}\\
By \eqref{psi}, \ref{phi} and \eqref{Psi}, we have

$$ \Psi(e,\lambda_1,\lambda)=\prod_{g\leq e ,\,  |g|>1}\phi(g,\lambda_1,\lambda)$$
$$=\prod_{g\leq e, \, |g|>1}\left(\frac{\lambda_1}{1+\partial(g^-)\lambda_1}+\frac{1}{1+\partial(g^-)\lambda_1}\psi(e,\lambda)\psi(e^{-1},\lambda)+\frac{(\partial(g^-)-1)\lambda_1}{1+\partial(g^-)\lambda_1}\psi(e,\lambda)\right)$$

$$=\left(\prod_{g\leq e, \, |g|>1}\frac{1}{1+\partial(g^-)\lambda_1}\right) \prod_{g\leq e, \, |g|>1}\left(\lambda_1+\psi(e,\lambda)\psi(e^{-1},\lambda)+(\partial(g^-)-1)\lambda_1\psi(e,\lambda)\right) $$
By~\ref{psi}, we have:

\begin{equation}
\begin{split}
&\lambda_1+\psi(g,\lambda)\psi(g^{-1},\lambda)+(\partial(g^-)-1)\lambda_1\psi(g,\lambda) \\
=&\lambda_1+\frac{|g|-2}{|g|}+(\partial(g^-)-1)\lambda_1 \frac{|g|-1}{|g|}\\
=&\frac{\lambda_1|g|+|g|-2+(\partial(g^-)-1)\lambda_1(|g|-1)}{g|}\\
{=}&1+\partial(g^-)\lambda_1-\frac{(\partial(g^-)-1)\lambda_1+2}{|g|}
\end{split}
\end{equation}
Therefore we obtain~\ref{equ:computePhi2}.
\end{proof}

\begin{proof}[Proof of Lemma \ref{mainlemma}]
We will proceed by distinguishing a few cases.\\

\noindent
{\bf Case I: $\lambda\neq 1$, $br(\mathcal{T})>1$ and $\beta(\mathcal{T}, \lambda_1, \lambda)<\frac{1}{br(\mathcal{T})}$.}\\
By \eqref{branchingdef},  there exists $\delta\in(0,1)$ such that
\begin{equation}\label{small}
\inf_{\pi\in\Pi} \sum_{e\in \Pi}\beta^{(1-\delta)^2|e|}=0.
\end{equation}
As $\beta<\beta^{(1-\delta)}$, there exists $c>0$, for any $n>0$,
\begin{equation}
\label{equ:abc1}
\prod_{i=1}^{n}\frac{\lambda^2+(x_i-1)\lambda_1\lambda+\lambda_1}{1+x_i\lambda_1}\leq c\, \beta^{(1-\delta)n}.
\end{equation}

By ~\ref{equ:computePhi1} and \ref{equ:abc1}, there exists $C>0$ such that for any $\pi\in\Pi$,
\begin{equation}\label{comp1}
\begin{split}
\sum_{e\in\pi} \Psi(e)^{1-\delta}&\leq C\sum_{e\in \Pi}\beta^{(1-\delta)^2|e|}.
\end{split}
\end{equation}
Therefore, by \eqref{small},
\begin{equation}
\inf_{\pi\in\Pi} \sum_{e\in\pi} \Psi(e)^{1-\delta}=0,
\end{equation}
which implies that $RT(\mathcal{T},{\bf X})<1$.\\

\noindent
{\bf Case II: $\lambda\neq 1$, $br(\mathcal{T})>1$ and $\alpha(\mathcal T,\lambda_1,\lambda)>\frac{1}{br(\mathcal{T})}$.}\\
First, note that if $\lambda>1$ and $br(\mathcal{T})>1$ then $\bf X$ is transient. Now, assume that $\lambda<1$, $br(\mathcal{T})>1$ and $\alpha(\mathcal T,\lambda_1,\lambda)>\frac{1}{br(\mathcal{T})}$. We have that there exists $\delta>0$ and $\epsilon>0$ such that
\begin{equation}\label{small222}
\inf_{\pi\in\Pi} \sum_{e\in \Pi} \alpha^{(1+\delta)^2|e|}>\epsilon.
\end{equation}
By \ref{equaofalpha} and $\lambda<1$, we obtain $\alpha<1$, therefore $\alpha^{1+\delta}<\alpha$. We have that there exists $c>0$, for any $n>0$,
\begin{equation}
\label{equ:abc2}
\prod_{i=1}^{n}\frac{\lambda^2+(x_i-1)\lambda_1\lambda+\lambda_1}{1+x_i\lambda_1}\geq c\, \alpha^{(1+\delta)n}.
\end{equation}
By ~\ref{equ:computePhi1} and \ref{equ:abc2}, there exists $C>0$ such that for any $\pi\in\Pi$,
\begin{equation}\label{comp1}
\begin{split}
\sum_{e\in\pi} \Psi(e)^{1+\delta}&\geq C\sum_{e\in \Pi}\alpha^{(1+\delta)^2|e|}.
\end{split}
\end{equation}

Therefore, by \eqref{small222},
\begin{equation}
\inf_{\pi\in\Pi} \sum_{e\in\pi} \Psi(e)^{1+\delta}>0,
\end{equation}
which implies that $RT(\mathcal{T},{\bf X})>1$.\\

\noindent
{\bf Case III: $\lambda=1$ and $\eta(\mathcal{T}, \lambda_1)<br_r(\mathcal{T})$.}\\
We have that there exists $\delta>0$ and $\epsilon>0$ such that
\begin{equation}
\label{small2222}
\inf_{\pi\in\Pi} \sum_{e\in \pi}|e|^{-(1+\delta)^2\eta}>\epsilon.
\end{equation}
As $\eta<(1+\delta)\eta$, by \ref{equaofeta} there exists $c>0$, for any $n>0$,
\begin{equation}
\label{equ:abc3}
\prod_{i=1}^{n}\left [1-\frac{(x_i-1)\lambda_1+2}{(1+x_i\lambda_1)i}\right]\geq c\, n^{-(1+\delta)\eta}.
\end{equation}
By ~\ref{equ:computePhi2} and \ref{equ:abc3}, there exists $C>0$ such that for any $\pi\in\Pi$,
\begin{equation}\label{comp1}
\begin{split}
\sum_{e\in\pi} \Psi(e)^{1+\delta}&\geq C\sum_{e\in \Pi}|e|^{-(1+\delta)^2\eta}.
\end{split}
\end{equation}
Therefore, by \eqref{small2222},
\begin{equation}
\inf_{\pi\in\Pi} \sum_{e\in\pi} \Psi(e)^{1+\delta}>0,
\end{equation}
which implies that $RT(\mathcal{T},{\bf X})>1$.\\

\noindent
{\bf Case IV:  $\lambda=1$ and $\gamma(\mathcal T, \lambda_1)>br_r(\mathcal{T})$}\\
We have that there exists $\delta>0$ such that
\begin{equation}
\label{small22222}
\inf_{\pi\in\Pi} \sum_{e\in \pi}|e|^{-(1-\delta)^2\eta}=0.
\end{equation}
As $\eta>(1-\delta)\eta$, by \ref{equaofeta} there exists $c>0$, for any $n>0$,
\begin{equation}
\label{equ:abc4}
\prod_{i=1}^{n}\left [1-\frac{(x_i-1)\lambda_1+2}{(1+x_i\lambda_1)i}\right]\leq c\, n^{-(1-\delta)\eta}.
\end{equation}
By ~\ref{equ:computePhi2} and \ref{equ:abc4}, there exists $C>0$ such that for any $\pi\in\Pi$,
\begin{equation}\label{comp1}
\begin{split}
\sum_{e\in\pi} \Psi(e)^{1-\delta}&\leq C\sum_{e\in \Pi}|e|^{-(1-\delta)^2\eta}.
\end{split}
\end{equation}
Therefore, by \eqref{small22222},
\begin{equation}
\inf_{\pi\in\Pi} \sum_{e\in\pi} \Psi(e)^{1-\delta}>0,
\end{equation}
which implies that $RT(\mathcal{T},{\bf X})<1$.
\end{proof}

\section{Extensions}
\label{rubin}
First of all, let us describe the dynamic of this model. If $\bf X$ visits a vertex $\nu$ for the first time, three cases can occur for visiting $\nu_1$ (see Figure~\ref{fig:abcdefgh1}): 
\begin{itemize}
\item It eats the cookie at $\nu$ and returns to the parent of $\nu$ (i.e.\ $\nu^{-1}$) with probability $\frac{1}{1+\partial(\nu)\lambda_1}$. It then visits $\nu$ for the second time, and goes to $\nu_1$ with probability $\frac{\lambda}{1+\partial(\nu)\lambda}$. 
\item It goes directly to $\nu_1$ with probability $\frac{\lambda_1}{1+\partial(\nu)\lambda_1}$. 
\item It goes to one of the chidren of $\nu$ except for $\nu_1$, with probability $\frac{(\partial \nu-1)\lambda_1}{1+\partial(\nu)\lambda_1}$. It then visits $\nu$ for the second time, and goes to $\nu_1$ with probability $\frac{\lambda}{1+\partial(\nu)\lambda}$. 
\end{itemize} 

\begin{figure}[h!]
    \centering
    \includegraphics[scale=0.6]{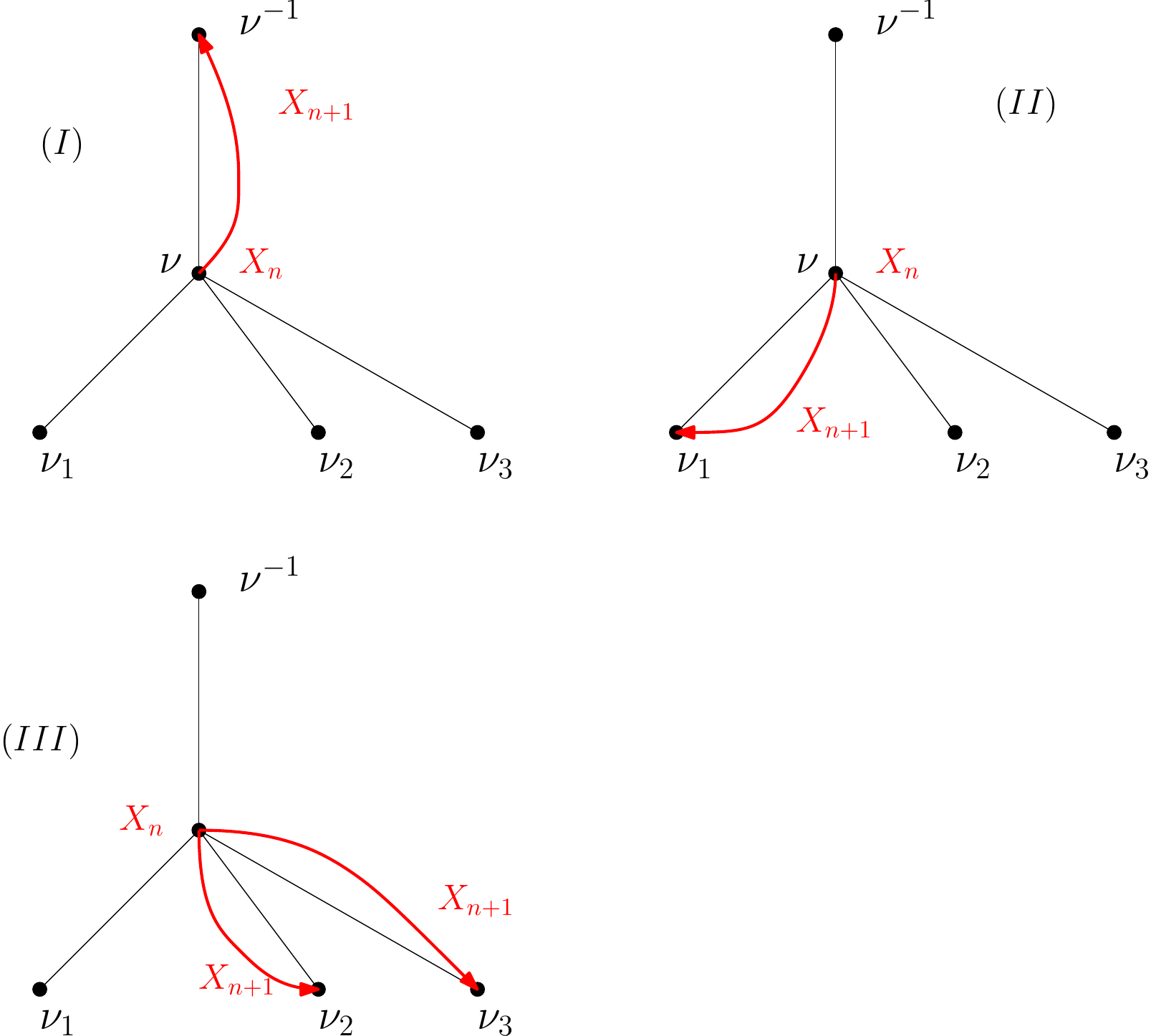} 
    \caption{The movement of $\bf X$ to $\nu_1$ after visiting $\nu$.}
    \label{fig:abcdefgh1}
  \end{figure}

\bigskip

Now, we introduce a construction of once-excited random walk by using the Rubin's construction. Let $(\Omega, \mathcal{F},\bP)$ denote a probability space on which
\begin{align}\label{defY}
{\bf Y}=(Y(\nu,\mu,k): (\nu,\mu)\in V^2, \mbox{with }\nu \sim \mu, \textrm{ and }k \in \N)\\
{\bf Z}=(Z(\nu,\mu): (\nu,\mu)\in V^2, \mbox{with }\nu \sim \mu)
\end{align}
are two families of independent mean $1$ exponential random variables, where $(\nu,\mu)$ {denotes} an {\it ordered} pair { of vertices}. Let 
\begin{align}\label{defU}
{\bf U}=(U_\nu: \nu \in V)
\end{align}
is a family of independent uniformly random variables on $[0,1]$ which is independent to $\bf Y$ and $\bf Z$. 
 
For any pair vertices   $\nu,\mu\in V$ with $\nu\sim \mu$, we define the following quantities
\begin{equation} \label{wj1}
r(\nu,\mu)=\left\{\begin{array}{cc}
&\lambda^{|\nu|-1}\text{, if }\mu<\nu,\\
&\lambda^{|\mu|-1}\text{, if }\nu<\mu.
\end{array}\right.
\end{equation}

Let $\mathcal{T}'$ be a sub-tree of $\mathcal{T}$, we define the {\it extension} ${\bf X}^{ (\mathcal{T}')}=(V',E')$  on $\mathcal{T}'$ in the following way. Denote by $\varrho'$ the root of $\mathcal{T}'$ which  be defined as the vertex of $V'$ with smallest distance to the root of $\mathcal{T}$.
For  any family of nonnegative integers $\bar{k}=(k_\mu)_{\mu: [\nu,\mu]\in E'} $, we let 
\begin{align}
A^{ (\mathcal{T}')}_{\bar{k},n,\nu}:=\{{X}^{ (\mathcal{T}')}_n = \nu\}\cap\bigcap_{\mu: [\nu,\mu]\in E'} \{\#\{1\le j \le n \colon ({X}^{ (\mathcal{T}')}_{j-1},{X}^{ (\mathcal{T}')}_j) = (\nu,\mu)\} = k_\mu\}.
\end{align}
\begin{align}
t_\nu(n):=\#\{1\leq j\leq n: X^{(\mathcal{T}')}_j=\nu\}. 
\end{align}
\begin{align}
h_\nu:=\inf\{i\geq 1: t_\nu(i)=2\}.
\end{align}
\begin{align}
\widetilde{A}^{ (\mathcal{T}')}_{\bar{k},n,\nu}:=\{{X}^{ (\mathcal{T}')}_n = \nu\}\cap\bigcap_{\mu: [\nu,\mu]\in E'} \{\#\{h_\nu\le j \le n \colon ({X}^{ (\mathcal{T}')}_{j-1},{X}^{ (\mathcal{T}')}_j) = (\nu,\mu)\} = k_\mu\}.
\end{align}
\begin{align}
\mathcal{I}^{\mathcal T}(\nu):=\#\{i\in \{1,2,\cdots, \partial(\nu)\}: \nu_i\in V(\mathcal T')\}. 
\end{align}
Set ${X}^{ (\mathcal{T}')}_0=\r'$ and on the event $A^{ (\mathcal{T}')}_{\bar{k},n,\nu}\cap \{t_\nu(n)\leq 1\}$: 
\begin{itemize}
\item If $U_\nu<\frac{1}{1+\partial(\nu)\lambda_1}$, then we set ${X}^{ (\mathcal{T}')}_{n+1}=\nu^{-1}$.
\item If \, $U_\nu\in \left[\frac{1+(j-1)\lambda_1}{1+\partial(\nu)\lambda_1}, \frac{1+j\lambda_1}{1+\partial(\nu)\lambda_1}\right]$ and $j\in \mathcal{I}^\mathcal{T}(\nu)$, then we set $X^{(\mathcal{T'})}_{n+1}=v_j$.
\item If \, $U_\nu\in \left[\frac{1+(j-1)\lambda_1}{1+\partial(\nu)\lambda_1}, \frac{1+j\lambda_1}{1+\partial(\nu)\lambda_1}\right]$ for some $j\notin \mathcal{I}^\mathcal{T}(\nu)$ and 
$$\left\{\nu' = \argmin_{\mu: [\nu,\mu]\in E'}\Big\{\frac{Z(\nu, \mu)}{r(\nu, \mu}\Big\}\right\},$$
we set $X^{(\mathcal{T'})}_{n+1}=\nu'$. 
\end{itemize}

On the event 
\begin{align}\label{ursula}
\widetilde{A}^{ (\mathcal{T}')}_{\bar{k},n,\nu}\cap \{t_\nu(n)\geq 2\}\cap \left\{\nu' = \argmin_{\mu: [\nu,\mu]\in E'}\Big\{\sum_{i=0}^{k_{\mu}}\frac{Y(\nu, \mu, i)}{r(\nu, \mu)} \Big\}\right\}, 
\end{align}
 we set ${X}^{ (\mathcal{T}')}_{n+1} = \nu'$, where the function $r$ is defined in \eqref{wj1} and the clocks $Y$'s are from the same collection ${\bf Y}$ fixed in \eqref{defY}.\\
 
Thus, this defines ${\bf X}^{(\mathcal{T})}$ as the extension on the whole tree.
  By using the properties of independent exponential random variables, it is easy to check that this construction is a construction of  $(\lambda_1,\lambda)$-OERW on the tree $\mathcal{T}$. We refer the reader to (\cite{collevecchio2018branching}, section 7) for more discussions  on this construction. 

In the case $\mathcal{T}'=[\r,\nu]$ for some vertex $\nu$ of $\mathcal{T}$, we write ${\bf X}^{(\nu)}$ instead of ${\bf X}^{([\r,\nu])}$, and we denote $T^{(\nu)}(\cdot)$ the return times associated to ${\bf X}^{(\nu)}$. For simplicity, we will also write  ${\bf X}^{(e)}$ and $T^{(e)}(\cdot)$ instead of ${\bf X}^{(e^+)}$ and $T^{(e^+)}(\cdot)$ for $e\in E$.

\begin{remark}
Let $\mathcal{T}'$ be a proper subtree of $\mathcal{T}$. Note that ${\bf X}^{(\mathcal{T}')}$ is not $(\lambda_1,\lambda)$-OERW on $\mathcal{T}'$, that is different with $M$-digging random walk (see \cite{collevecchio2018branching}, section 7) and once-reinforced random walk (see \cite{collevecchio2017branching}, section 5). 
\end{remark}

Finally, we give a probabilistic interpretation of the functions $\phi$ and $\Psi$: 
\begin{lemma}
\label{lem:lemma10}
For any $e\in E$ and any $g\le e$, we have
\begin{align}\label{eqhit1}
\phi(g,\lambda_1,\lambda)&=\mathbb P\left(T^{(e)}(g^+)\circ \theta_{T^{(e)}(g^-)}<T^{(e)}({\r})\circ \theta_{T^{(e)}(g^-)}\right),\\ \label{eqhit}
\Psi(e,\lambda_1,\lambda)&=\mathbb P\left(T^{(e)}(e^+)<T^{(e)}({\r})\right),
\end{align}
where $\theta$ is the canonical shift on the trajectories.
\end{lemma}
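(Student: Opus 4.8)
The plan is to prove the two identities in turn, first \eqref{eqhit1} for $\phi$ and then to deduce \eqref{eqhit} for $\Psi$ by a telescoping product. The point underlying everything is that the extension ${\bf X}^{(e)}$ is a once-excited walk on the \emph{path} $[\r,e^+]$, so that after its first visit every vertex behaves like a biased nearest-neighbour walk with bias $\lambda$: by the competing-exponential property applied to the $Y$-clocks with the rates $r$ of \eqref{wj1}, from a vertex at height $m-1$ the walk jumps toward its on-path child with probability $\lambda/(1+\lambda)$ and toward its parent with probability $1/(1+\lambda)$ (the off-path children being automatically absorbed into these two directions). First I would record the gambler's ruin reading of $\psi$: a biased walk on $\{0,\dots,|g^+|\}$ started at $g^-$ reaches $g^+$ before $\r$ with probability $\psi(g,\lambda)$, and started at the parent of $g^-$ reaches $g^-$ before $\r$ with probability $\psi(g^{-1},\lambda)$; both equal the standard ruin value $(1-(1/\lambda)^i)/(1-(1/\lambda)^N)$, which one checks against \eqref{psi}.

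For \eqref{eqhit1} I would condition on the first move out of $g^-$ at its first visit, governed by the uniform $U_{g^-}$ in the construction of Section~\ref{rubin}, and split into the three cases pictured in Figure~\ref{fig:abcdefgh1}. With probability $\lambda_1/(1+\partial(g^-)\lambda_1)$ the walk steps directly onto the on-path child $g^+$ and succeeds at once, giving the first term. With probability $1/(1+\partial(g^-)\lambda_1)$ it retreats to the parent of $g^-$; by the strong Markov property it must then climb back to $g^-$ (probability $\psi(g^{-1},\lambda)$, post-excitation) and from there reach $g^+$ before $\r$ (probability $\psi(g,\lambda)$), which is the second term. With probability $(\partial(g^-)-1)\lambda_1/(1+\partial(g^-)\lambda_1)$ the uniform points at an off-path child: since that child is absent from $\mathcal{T}'=[\r,e^+]$, the construction redirects the step through the clocks $Z(g^-,\cdot)$, and this first step, together with all later post-excitation steps at $g^-$, makes $g^-$ behave as an ordinary biased vertex from the outset, so the walk reaches $g^+$ before $\r$ with the ruin probability $\psi(g,\lambda)$, giving the third term. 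Summing the three contributions reproduces \eqref{phi}; the case $|g|=1$ is immediate since the root has $g^+$ as its only neighbour in $[\r,e^+]$, whence $\phi(g)=1$.

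To obtain \eqref{eqhit} I would write the path as $\r=v_0,v_1,\dots,v_m=e^+$ with edges $g_i=[v_{i-1},v_i]$ and note that, being a nearest-neighbour walk on a path, ${\bf X}^{(e)}$ can reach $e^+$ only by hitting $v_1,\dots,v_m$ in order. Setting $\tau_i=T^{(e)}(v_i)$ the events are nested, $\{\tau_m<T^{(e)}(\r)\}\subseteq\cdots\subseteq\{\tau_1<T^{(e)}(\r)\}$, so $\mathbb{P}(\tau_m<T^{(e)}(\r))=\prod_{i=1}^m \mathbb{P}(\tau_i<T^{(e)}(\r)\mid \tau_{i-1}<T^{(e)}(\r))$. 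Here I would invoke the decisive feature of a \emph{once}-excited walk: the evolution after $\tau_{i-1}$ depends on the past only through which vertices have already been visited, and on $\{\tau_{i-1}<T^{(e)}(\r)\}$ this profile is deterministic, namely $v_0,\dots,v_{i-2}$ are post-excitation while $v_{i-1}$ still carries its cookie and $v_i,\dots,v_m$ are untouched. The strong Markov property then identifies each conditional factor with the quantity computed in \eqref{eqhit1}, i.e.\ $\phi(g_i)$, so the product equals $\prod_{g\le e}\phi(g)=\Psi(e)$ by \eqref{Psi}.

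The main obstacle is the bookkeeping in the third case of \eqref{eqhit1}: one must argue with care that, once the cookie at $g^-$ points to a vertex outside the extension, the $Z$-clock redirection together with the memorylessness of the exponential families ${\bf Z}$ and ${\bf Y}$ turns $g^-$ into a genuine biased vertex, so that its contribution is exactly $\psi(g,\lambda)$ rather than a more complicated expression. This rests on the correctness of the Rubin-type construction stated in Section~\ref{rubin} and detailed in \cite{collevecchio2017branching,collevecchio2018branching}; establishing the strong Markov property of the extension at the hitting times $\tau_i$ with the correct cookie environment is the other point requiring attention.
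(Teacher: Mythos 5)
Your proposal is correct and follows essentially the same route as the paper: the paper's proof also conditions on the uniform $U_{g^-}$ from the Rubin-type construction and splits into exactly your three cases (retreat to the parent, cookie pointing at the on-path child $g^+$, cookie pointing at an off-path child), obtaining the three contributions $\frac{1}{1+\partial(g^-)\lambda_1}\psi(g,\lambda)\psi(g^{-1},\lambda)$, $\frac{\lambda_1}{1+\partial(g^-)\lambda_1}$, and $\frac{(\partial(g^-)-1)\lambda_1}{1+\partial(g^-)\lambda_1}\psi(g,\lambda)$, whose sum is $\phi(g,\lambda_1,\lambda)$. The only difference is that you spell out the telescoping/strong-Markov argument deducing \eqref{eqhit} from \eqref{eqhit1}, which the paper leaves implicit.
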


\begin{proof}
Let $e\in E$ and $g\le e$. For simplicity, we set
$$\mathcal A:=\{T^{(e)}(g^+)\circ \theta_{T^{(e)}(g^-)}<T^{(e)}({\r})\circ \theta_{T^{(e)}(g^-)}\},$$ 
$$\mathcal{I}_1:= \left[\frac{1+(j-1)\lambda_1}{1+\partial(g^-)\lambda_1}, \frac{1+j\lambda_1}{1+\partial(g^-)\lambda_1}\right],$$
$$\mathcal{I}_2:=[0,1]\setminus \left(\left[\frac{1+(j-1)\lambda_1}{1+\partial(g^-)\lambda_1}, \frac{1+j\lambda_1}{1+\partial(g^-)\lambda_1}\right]\bigcup \left[0, \frac{1}{1+\partial(g^-)\lambda_1}\right]\right), $$
where $j\in \{1,..., \partial(g^-)\}$ such that $(g^-)_j=g^+$. We have that
\begin{equation}\label{equ:bac0}
\begin{split}
&\mathbb P\left(\mathcal A\right)=\mathbb P\left(A\Big{|}U_{g^-}<\frac{1}{1+\partial(g^-)\lambda_1}\right)\times \mathbb{P}\left(U_{g^-}<\frac{1}{1+\partial(g^-)\lambda_1}\right) \\
+&\mathbb P\left(A|\mathcal{I}_1\right)\times \mathbb{P}\left(U_{g^-}\in \mathcal{I}_1\right)+\mathbb P\left(A|\mathcal{I}_2\right)\times \mathbb{P}\left(U_{g^-}\in \mathcal{I}_2\right).
\end{split}
\end{equation}
On the other hand, we have the following equalities:
\begin{equation}\label{equ:bac1}
\mathbb P\left(A\Big{|}U_{g^-}<\frac{1}{1+\partial(g^-)\lambda_1}\right)\times \mathbb{P}\left(U_{g^-}<\frac{1}{1+\partial(g^-)\lambda_1}\right)=\\ 
\frac{1}{1+\partial(g^-)\lambda_1}\psi(g,\lambda)\psi(g^{-1},\lambda)
\end{equation}
\begin{equation}\label{equ:bac2}
\mathbb P\left(A|\mathcal{I}_1\right)\times \mathbb{P}\left(U_{g^-}\in \mathcal{I}_1\right)=\frac{\lambda_1}{1+\partial(g^-)\lambda_1}.
\end{equation}
\begin{equation}\label{equ:bac3}
\mathbb P\left(A|\mathcal{I}_2\right)\times \mathbb{P}\left(U_{g^-}\in \mathcal{I}_2\right)=\frac{(\partial(g^-)-1)\lambda_1}{1+\partial(g^-)\lambda_1}\psi(g,\lambda).
\end{equation}
We use \eqref{equ:bac0}, \eqref{equ:bac1}, \eqref{equ:bac2} and \eqref{equ:bac3} to obtain the results. 
\end{proof}

\section{Recurrence in Theorem \ref{maintheorem}: The case $RT(\mathcal{T},{\bf X})<1$}

 \begin{proposition}\label{proprec}
 If
 \begin{align}\label{almadort}
\inf_{\pi\in \Pi}\sum_{e\in\pi}\Psi(e)=0,
\end{align}
then $\X$ is recurrent.
 \end{proposition}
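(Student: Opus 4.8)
The plan is to prove recurrence, i.e.\ $\P(T(\r)=\infty)=0$, by controlling for every cutset $\pi$ the probability that $\X$ ever crosses $\pi$ (reaches the far endpoint $e^+$ of some $e\in\pi$) before returning to $\r$, and then letting \eqref{almadort} drive this control to zero along a suitable sequence of cutsets. The bridge to $\Psi$ is Lemma~\ref{lem:lemma10}, which identifies $\Psi(e)$ with the probability $\P\big(T^{(e)}(e^+)<T^{(e)}(\r)\big)$ that the \emph{path-extension} $\X^{(e)}$ reaches $e^+$ before $\r$. Everything thus reduces to two inequalities: a union bound over a cutset, and a monotonicity comparison between the genuine walk $\X=\X^{(\mathcal T)}$ and its restriction to the single path $[\r,e^+]$.

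\textbf{Step 1 (reduction to finite cutsets and the union bound).} Since the summands are nonnegative, any cutset contains a minimal one, and minimal cutsets in a locally finite tree are finite (by König's lemma applied to the component of $\r$, which has no infinite ray); hence the infimum in \eqref{almadort} may be taken over finite cutsets. Fix such a $\pi$ and let $R_\pi$ be the finite set of vertices inside $\pi$. On $\{T(\r)=\infty\}\cap\{\X\text{ never crosses }\pi\}$ the walk is confined to $R_\pi$ forever, so some neighbor of $\r$ is visited infinitely often; after the finitely many excited visits each such visit carries a probability bounded below of stepping to $\r$, so a conditional Borel--Cantelli argument forces a return, showing this event is null. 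Hence, on $\{T(\r)=\infty\}$ the walk crosses $\pi$, and (never returning) does so before $T(\r)$. Writing $A_e=\{T(e^+)<T(\r)\}$, we get
\begin{equation}
\P(T(\r)=\infty)\ \le\ \P\Big(\bigcup_{e\in\pi}A_e\Big)\ \le\ \sum_{e\in\pi}\P(A_e).
\end{equation}

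\textbf{Step 2 (monotonicity; the main obstacle).} The heart of the matter is
\begin{equation}\label{mono}
\P(A_e)\ =\ \P\big(T(e^+)<T(\r)\big)\ \le\ \P\big(T^{(e)}(e^+)<T^{(e)}(\r)\big)\ =\ \Psi(e),
\end{equation}
the last equality being Lemma~\ref{lem:lemma10}. I would prove \eqref{mono} by coupling $\X=\X^{(\mathcal T)}$ with the path-extension $\X^{(e)}$ through the shared Rubin clocks $\mathbf Y,\mathbf Z,\mathbf U$ of Section~\ref{rubin}, adjoining the subtrees of $\mathcal T$ hanging off $[\r,e^+]$ one at a time and showing that each adjunction can only decrease the probability of reaching $e^+$ before $\r$. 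The comparison is organized edge-by-edge along the path using the product structure $\Psi(e)=\prod_{g\le e}\phi(g)$ and the one-edge hitting identity \eqref{eqhit1}, so that a side-subtree at $g^-$ only diminishes the corresponding forward-crossing factor. This is the delicate point: precisely because (as the Remark stresses) $\X^{(\mathcal T')}$ is \emph{not} the OERW on $\mathcal T'$ and the off-path mass in the extension is rerouted to the parent or the on-path child via the $Z$-clocks rather than sent into genuine side branches, the cookie bookkeeping must be tracked carefully through the exponential-clock representation. This is exactly where the strategy of \cite{collevecchio2017branching} has to be adapted to the excited setting.

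\textbf{Step 3 (conclusion).} Combining Steps 1 and 2 gives, for every finite cutset $\pi$,
\begin{equation}
\P(T(\r)=\infty)\ \le\ \sum_{e\in\pi}\Psi(e).
\end{equation}
Taking the infimum over $\pi\in\Pi$ and invoking \eqref{almadort} yields $\P(T(\r)=\infty)\le \inf_{\pi\in\Pi}\sum_{e\in\pi}\Psi(e)=0$, i.e.\ $\X$ returns to $\r$ almost surely, which is the definition of recurrence. Only \eqref{mono} is genuinely substantial; the union bound and the finiteness reduction are routine.
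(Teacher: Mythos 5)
Your skeleton is the same as the paper's: the paper proves this proposition by deferring verbatim to Proposition~10 of \cite{collevecchio2017branching}, whose structure is precisely your Steps~1 and~3 (on $\{T(\r)=\infty\}$ the walk must cross every cutset, union bound, reduction of each summand to $\Psi(e)$ via Lemma~\ref{lem:lemma10}, then take the infimum using \eqref{almadort}). Those parts of your argument are correct and routine, as you say.

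The problem is Step~2, which is the only substantial step, and the mechanism you announce for it does not work in this model. The inequality $\mathbb{P}(T(e^+)<T(\r))\leq\mathbb{P}\bigl(T^{(e)}(e^+)<T^{(e)}(\r)\bigr)$ is true, but it is \emph{not} a pathwise consequence of the shared-clock coupling of Section~\ref{rubin}, i.e.\ the event inclusion $\{T(e^+)<T(\r)\}\subseteq\{T^{(e)}(e^+)<T^{(e)}(\r)\}$ fails realization by realization. Indeed, let $\nu$ be a path vertex whose cookie variable $U_\nu$ points to an off-path child. The true walk $\X$ then makes a genuine excursion into that side tree, and if it returns, its first subsequent move along the path is decided by the race of the clocks $Y(\nu,\cdot,0)$; the extension $\X^{(e)}$, by its very definition, decides its move at that same visit by the race of the \emph{independent} clocks $Z(\nu,\cdot)$. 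Since these two races are independent, with positive probability $\X$ steps to the on-path child and goes on to cross $e$ while $\X^{(e)}$ steps to the parent and returns to $\r$; so no pathwise inclusion holds. This is exactly the content of the paper's Remark that $\X^{(\mathcal{T}')}$ is not an OERW on $\mathcal{T}'$: in the ORRW and digging settings the pathwise inclusion does hold under the common clocks (the restriction of the full walk to path moves \emph{is} the extension there), which is why the verbatim citation is harmless in those papers but hides real work here. The correct route is a comparison in law: the moves of $\X$ along the edges of $[\r,e^+]$, watched up to the first time (if ever) that $\X$ enters a side tree and never returns, form a process with the same law as $\X^{(e)}$ killed at an analogous time --- the cookie intervals at each path vertex are identical, and both the $Z$-race of the extension and the post-excursion $Y$-race of the true walk produce a $\lambda$-biased step among the path neighbours, independently of the past --- and the killing can only destroy the crossing event. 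Your ``adjoin the side trees one at a time'' plan is in this spirit, but as stated it is a comparison of laws, not of realizations, it needs a limiting argument when infinitely many side trees are attached, and it is precisely the part you announce rather than prove; until it is carried out, the proof has a genuine gap at its only nontrivial point.
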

 
 \begin{proof}
The proof is identical to the proof of Proposition 10 of \cite{collevecchio2017branching}.
\end{proof}

\section{Transience in Theorem \ref{maintheorem}: The case $RT(\mathcal{T},{\bf X})>1$}

In order to prove transience, we use the relationship between the walk $\bf{X}$ and its associated percolation. 
\subsection{Link with percolation}
\label{linkwithperco}

Denote by $C(\r)$ the set of edges which are {crossed} by ${\bf X}$ before returning to $\r$, that is: 
\begin{equation}
\mathcal C(\r)=\{e\in E: T(e^+)<T(\r) \}.
\end{equation}

We define an other percolation which will be more easy to study. In order to do this, we use the Rubin'€™s construction and the extensions introduced in Section~\ref{rubin}. We define
\begin{equation}
\mathcal C_{CP}(\varrho)=\{e\in E: T^{(e)}(e^+)<T^{(e)}(\varrho)\}.
\end{equation}

We say that an edge $e\in E$ is open if and only if $e\in \mathcal C_{CP}(\varrho)$.

\begin{lemma}
\label{lemma1}
We have that
\begin{equation}
\mathbb{P}(T(\r)=\infty)=\mathbb{P}(|\mathcal{C}(\r)|=\infty)=\mathbb{P}(|\mathcal{C}_{CP}(\r)|=\infty).
\end{equation}
\end{lemma}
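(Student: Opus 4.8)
The plan is to prove the chain of equalities
\[
\mathbb{P}(T(\r)=\infty)=\mathbb{P}(|\mathcal{C}(\r)|=\infty)=\mathbb{P}(|\mathcal{C}_{CP}(\r)|=\infty)
\]
by treating the two equalities separately. For the first, I would argue that the walk $\bf X$ escapes to infinity (i.e.\ never returns to $\r$) if and only if the set of edges $\mathcal{C}(\r)$ it crosses before returning to $\r$ is infinite. One direction is immediate: if $T(\r)<\infty$, then $\bf X$ returns to $\r$ in finitely many steps, so it has crossed only finitely many edges beforehand, forcing $|\mathcal{C}(\r)|<\infty$. For the converse, on the event $\{T(\r)=\infty\}$ the walk never comes back to the root; since $\mathcal{T}$ is locally finite and $\bf X$ makes nearest-neighbor steps, an infinite non-returning trajectory must traverse infinitely many distinct edges (it cannot remain confined to a finite edge set without returning to $\r$ infinitely often by recurrence of a finite-state chain), so $|\mathcal{C}(\r)|=\infty$. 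I would phrase this carefully using that a nearest-neighbor walk confined to finitely many edges of a tree is an irreducible finite Markov chain and hence recurrent, which contradicts $T(\r)=\infty$ unless infinitely many edges are visited.

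The second equality is the substantive one and rests on the coupling from Section~\ref{rubin}. The key point, exactly as in \cite{collevecchio2017branching}, is that the extensions ${\bf X}^{(e)}$ are coupled to the full-tree walk ${\bf X}^{(\mathcal{T})}={\bf X}$ through the shared Rubin clocks $\bf Y$, $\bf Z$ and the uniforms $\bf U$ fixed in \eqref{defY}--\eqref{defU}. I would show that for each edge $e$, the event $\{e\in\mathcal{C}(\r)\}=\{T(e^+)<T(\r)\}$ and the event $\{e\in\mathcal{C}_{CP}(\r)\}=\{T^{(e)}(e^+)<T^{(e)}(\r)\}$ agree on the relevant part of the probability space, so that $\mathcal{C}(\r)$ and $\mathcal{C}_{CP}(\r)$ coincide \emph{as clusters of the root}, i.e.\ the connected component of $\r$ in $\mathcal{C}(\r)$ equals that in $\mathcal{C}_{CP}(\r)$. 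The mechanism is that whether $\bf X$ crosses $e$ before returning to $\r$ is determined by the same clock comparisons that govern whether the extension ${\bf X}^{(e)}$, which lives on the path $[\r,e^+]$, crosses $e$ before returning to $\r$: restricted to the path down to $e$, both processes make the same decisions because they read the same exponential and uniform variables. Thus on the root's cluster the two edge-sets are identical, and in particular one is infinite if and only if the other is, giving the equality of the two probabilities.

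The main obstacle, and the step I would spend the most care on, is the second equality: one must verify that the extension ${\bf X}^{(e)}$ really does reproduce the crossing behavior of $\bf X$ on the ray $[\r,e^+]$ despite the fact, emphasized in the Remark following the construction, that ${\bf X}^{(\mathcal{T}')}$ is \emph{not} itself a $(\lambda_1,\lambda)$-OERW on $\mathcal{T}'$. The delicate issue is the once-excited (cookie) dynamics: the first visit to a vertex uses the uniform $U_\nu$ and the $\bf Z$-clocks, while subsequent visits use the $\bf Y$-clocks, so the coupling must respect both the excitation phase and the timing, and one must ensure that the edges of $\mathcal{C}_{CP}(\r)$ do assemble into a connected cluster containing $\r$ (an edge $e$ can be open in the $\mathcal{C}_{CP}$ sense only through a consistent history along $[\r,e^+]$). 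I would handle this by induction along the path from $\r$ to $e^+$, showing that on the cluster the extension and the full walk visit each vertex in the same excitation state and make the same clock comparisons, so that the crossing of each successive edge is decided identically; the monotone coupling argument of \cite{collevecchio2017branching} then transfers verbatim once this identification is in place. Finally I would note that $\{|\mathcal{C}(\r)|=\infty\}$ and $\{|\mathcal{C}_{CP}(\r)|=\infty\}$ depend only on the root's cluster, so the equality of clusters yields the equality of the two probabilities and completes the proof.
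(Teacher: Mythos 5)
Your treatment of the first equality is sound and is essentially the standard argument: if $T(\varrho)<\infty$ then only finitely many edges were crossed, while on $\{T(\varrho)=\infty\}$ the walk cannot remain forever in a finite part of the tree, since once every vertex there has been visited (its cookie eaten) the walk evolves in that finite region as an irreducible Markov chain and would have to return to $\varrho$. One should phrase it exactly this way, because the once-excited walk itself is \emph{not} Markov; your parenthetical does acknowledge this. This part matches the proof the paper invokes (the paper simply follows Lemma 11 of \cite{collevecchio2017branching}).

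The gap is in the second equality. Your key claim is pointwise: that ${\bf X}$ and the extension ${\bf X}^{(e)}$ ``make the same decisions because they read the same exponential and uniform variables'', so that each crossing event is ``decided identically'' and $\mathcal{C}(\varrho)$ coincides with the root cluster of $\mathcal{C}_{CP}(\varrho)$. For the once-excited walk this is false, and this is precisely the content of the Remark following the construction in Section~\ref{rubin}. When the cookie variable $U_\nu$ falls in an interval with $j\notin\mathcal{I}^{\mathcal T}(\nu)$ (an off-path child), the real walk ${\bf X}$ leaves the path and, upon returning, selects its next path-neighbor through the ${\bf Y}$-clock race, whereas the extension ${\bf X}^{(e)}$ selects it through the \emph{independent} ${\bf Z}$-clock race; these two choices disagree with positive probability (and ${\bf X}$ may in fact never return from the off-path excursion). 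Consequently ${\bf X}^{(e)}$ is not a function of the trajectory of ${\bf X}$, the events $\{e\in\mathcal{C}(\varrho)\}$ and $\{e\in\mathcal{C}_{CP}(\varrho)\}$ differ on a set of positive probability, and no induction along $[\varrho,e^+]$ can make the crossings ``decided identically''. The link between $\mathcal{C}(\varrho)$ and $\mathcal{C}_{CP}(\varrho)$ can therefore only be distributional --- an equality of probabilities, not an almost sure equality of edge sets --- and establishing that is exactly the nontrivial content of the lemma, which your argument does not supply. A second, related, problem is your closing claim that $\{|\mathcal{C}_{CP}(\varrho)|=\infty\}$ ``depends only on the root's cluster'': by definition $\mathcal{C}_{CP}(\varrho)$ is the set of \emph{all} open edges, so this step needs the separate fact that openness of $e$ forces openness of every $g\le e$; this does hold, via the nesting property of the extensions, but it must be proved, and you only gesture at it.
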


\begin{proof}
We can follow line by line the proof of Lemma 11 in \cite{collevecchio2017branching}.
\end{proof}

For simplicity, for a vertex $v\in V$, we write $v \in\C_{\mathrm{CP}}(\r)$ if one of the edges incident to $v$ is in $\C_{\mathrm{CP}}(\r)$. Besides, recall that for two edges $e_1$ and $e_2$, their common ancestor  with highest generation is the vertex denoted $e_1\wedge e_2$.

\begin{lemma}
\label{lemma2}
Let $\lambda_1\geq 0$, $\lambda>0$ and $\mathcal{T}$ be an infinite, locally finite and rooted tree with the root $\varrho$. Assume that the condition \eqref{condition1} holds with some constant $M$. Then the correlated percolation induced by $\mathcal{C}_{CP}$ is quasi-independent, i.e.~there exists a constant $C_Q\in (0,+\infty)$ such that, for any two edges $e_1, e_2$, we have that
\begin{equation}\label{qindep0}
\begin{split}
\mathbb{P}(e_1, e_2\in \mathcal C_{CP}(\r)|e_1\wedge e_2 \in \mathcal C_{CP}(\r))\leq& C_Q \mathbb{P}(e_1\in \mathcal C_{CP}(\r)|e_1\wedge e_2 \in \mathcal C_{CP}(\r))\\
&\times \mathbb{P}(e_2\in \mathcal C_{CP}(\r)|e_1\wedge e_2 \in \mathcal C_{CP}(\r)).
\end{split}
\end{equation}
\end{lemma}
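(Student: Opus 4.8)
The plan is to exploit the Rubin construction from Section \ref{rubin} to reduce the quasi-independence inequality \eqref{qindep0} to a statement about the independence structure of the exponential clocks $\mathbf{Y}$, $\mathbf{Z}$ and the uniform variables $\mathbf{U}$. Write $w=e_1\wedge e_2$ and condition throughout on $\{w\in \mathcal C_{CP}(\r)\}$. The key point is that, in the extension construction, whether an edge $e$ lies in $\mathcal C_{CP}(\r)$ (equivalently $T^{(e)}(e^+)<T^{(e)}(\r)$) is determined by the walk ${\bf X}^{(e)}$ on the single path $[\r,e^+]$, and this walk only reads the clocks $Y(\nu,\mu,k)$, $Z(\nu,\mu)$ and the uniforms $U_\nu$ attached to vertices $\nu$ on that path. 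Since $e_1$ and $e_2$ branch off at $w$, the portions of the two paths strictly above $w$ are vertex-disjoint, so the only shared randomness between the events $\{e_1\in\mathcal C_{CP}(\r)\}$ and $\{e_2\in\mathcal C_{CP}(\r)\}$ comes from the vertices on $[\r,w]$ and, crucially, from the vertex $w$ itself, where both paths use the same clocks $Y(w,\cdot,\cdot)$, $Z(w,\cdot)$ and uniform $U_w$.

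First I would make precise the conditional factorization. Using the strong Markov property of each extension at the hitting time $T^{(e_i)}(w)$, I would write
\begin{equation}
\mathbb P(e_i\in\mathcal C_{CP}(\r)\mid w\in\mathcal C_{CP}(\r))=\mathbb P\left(T^{(e_i)}(e_i^+)\circ\theta_{T^{(e_i)}(w)}<T^{(e_i)}(\r)\circ\theta_{T^{(e_i)}(w)}\mid w\in\mathcal C_{CP}(\r)\right),
\end{equation}
so that each conditional probability depends on the walk only after it first reaches $w$. The descent from $w$ back toward $\r$ (the ``$<T^{(e_i)}(\r)$'' requirement) is common to both events, while the excursions from $w$ up along $[w,e_1^+]$ and $[w,e_2^+]$ use disjoint families of clocks above $w$. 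The mechanism at $w$ itself is the single coupling: the uniform $U_w$ and the exponential clocks based at $w$ govern how the two upward excursions compete. I would isolate this by conditioning further on the number of times and the manner in which the walk leaves $w$ toward each child direction, exactly as encoded by the events $A^{(\cdot)}_{\bar k,n,\nu}$ and $\widetilde A^{(\cdot)}_{\bar k,n,\nu}$ of Section \ref{rubin}.

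The main obstacle, and where condition \eqref{condition1} enters, is controlling the dependence concentrated at $w$. After the first two visits to $w$ the walk behaves like a biased walk using the $Y$-clocks, and the joint law of ``the first upward step goes toward $e_1$'' and ``toward $e_2$'' does not factorize exactly; however, because $\deg w\le M$ is uniformly bounded, the relevant conditional probabilities (of choosing a given child, of the competition among at most $M$ exponential clocks $Y(w,\mu,\cdot)/r(w,\mu)$) are bounded above and below by constants depending only on $M$, $\lambda_1$ and $\lambda$. I would therefore show that the ratio
\begin{equation}
\frac{\mathbb P(e_1,e_2\in\mathcal C_{CP}(\r)\mid w\in\mathcal C_{CP}(\r))}{\mathbb P(e_1\in\mathcal C_{CP}(\r)\mid w\in\mathcal C_{CP}(\r))\,\mathbb P(e_2\in\mathcal C_{CP}(\r)\mid w\in\mathcal C_{CP}(\r))}
\end{equation}
is bounded by a constant $C_Q$ uniform in $e_1,e_2$: the numerator and the product of marginals differ only through the finitely many (at most $M$) clock comparisons at the shared vertex $w$, and the bounded-degree hypothesis forces those comparison probabilities to stay in a compact subinterval of $(0,1)$. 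I would follow the structure of the analogous quasi-independence proof in \cite{collevecchio2017branching}, with the extra care that here the extension ${\bf X}^{(\mathcal T')}$ is \emph{not} itself an OERW on $\mathcal T'$ (see the Remark after the construction), so the factorization must be argued directly from the clock families $\mathbf Y,\mathbf Z,\mathbf U$ rather than quoted from a Markov property of the restricted walk.
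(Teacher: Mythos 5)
Your setup is on the right track: the reduction to the branch vertex via the Rubin construction, the decomposition according to the value of $U_{e^+}$ at the vertex $e^+=e_1\wedge e_2$ (this is exactly the paper's decomposition $A+B+C+D$ versus $E_j+F_j+G_j$), and the observation that the clocks strictly above the branch point are disjoint, all match the paper's proof. The gap is in the step where you claim the ratio of the joint probability to the product of marginals is bounded because the dependence at the branch vertex consists of ``finitely many (at most $M$) clock comparisons'' whose probabilities stay ``in a compact subinterval of $(0,1)$''. That is not what the dependence looks like, and bounded degree alone cannot close the argument. Writing $e$ for the edge with $e^+=e_1\wedge e_2$, conditionally on $\{e\in\mathcal C_{CP}(\r)\}$ and on the value of $U_{e^+}$, both events $\{e_i\in\mathcal C_{CP}(\r)\}$ take the form $\{L(e)>L^*(e_i)\}$, where $L(e)$ is the total time consumed by the clocks on the oriented edge $(e^+,e^-)$ before the walk escapes to $\r$, and $L^*(e_i)$ is the time needed to climb from $e^+$ to $e_i^+$. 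These two events are positively correlated through the single shared budget $L(e)$; the numbers of clock rings involved ($N(e)$, $N^*(e_i)$) are unbounded, and the marginal probabilities $\mathbb P\left(L(e)>L^*(e_i)\right)$ are not bounded below --- they decay as $e_i$ moves away from $e$. So there is no finite, uniformly nondegenerate family of local comparisons to appeal to; the whole difficulty is to bound a correlation between events whose probabilities both tend to zero, uniformly in $|e_1|$ and $|e_2|$.

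The paper closes this by three steps your sketch never addresses: (i) $L(e)$ is exactly exponential with parameter $p=1/\sum_{g\le e}\lambda^{1-|g|}$ (a geometric number of i.i.d.\ exponential clocks), so that
\begin{equation*}
\mathbb P\left(L(e)>x_1\vee x_2\right)=e^{-p(x_1\vee x_2)}\le e^{-px_1/2}\,e^{-px_2/2},
\end{equation*}
which factorizes the joint probability at the price of halving the exponential parameter; (ii) each factor $\int e^{-px_i/2}f_i(x_i)\,dx_i$ is re-interpreted as $\mathbb P(\widetilde L(e)>L^*(e_i))$, the same hitting probability but for modified weights on $[\r,e]$; and (iii) the key uniform estimate (Lemma~\ref{lem:abcd1}) that this modification changes the answer by at most a constant factor, namely $\sum_{e<g\le e_1}\left(\widetilde{\psi}(g,\lambda)-\psi(g,\lambda)\right)\le c$ uniformly in $e$ and $e_1$, which is a summability statement along arbitrarily long paths, proved using the explicit weights $\lambda^{|g|-1}$ and a case analysis on $\lambda<1$, $\lambda=1$, $\lambda>1$ --- not a consequence of bounded degree. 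Condition \eqref{condition1} enters only mildly, through factors such as $1+M\lambda_1$ and $\exp(Mc+2c/\lambda_1)$; it is nowhere near strong enough to carry the argument on its own, as your proposal suggests. Without steps (i)--(iii), or some substitute that controls the positive correlation through $L(e)$ uniformly in the heights of $e_1,e_2$, the proposal does not yield \eqref{qindep0}.
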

\begin{proof}
Recall the construction of Section~\ref{rubin}. Note that if $e_1\wedge e_2=\varrho$, then the extensions on $[\varrho, e_1]$ and $[\varrho, e_2]$ are independent, then the conclusion of Lemma holds with $C=1$. 
Assume that $e_1\wedge e_2 \neq \varrho$, and  note  that  the  extensions  on $[\varrho, e_1]$ and $[\varrho, e_2]$ are dependent since they use the same  clocks on 
 $[\varrho, e_1\wedge e_2]$. Denote by $e$ the unique edge of $\mathcal{T}$ such that $e^+=e_1\wedge e_2$. For $i\in \{1,2\}$, let $v_i$ be the vertex which is the offspring of $e^+$ lying the path from $\varrho$ to $e_i$. Note that $v_i$ could be equal to $e^+_i$. Let $i_1$ (resp. $i_2$) be an element of $\{1,..., \partial(e^+)\}$ such that $(e^+)_{i_1}=v_1$ (resp. $(e^+)_{i_2}=v_2$).\\
As the events $\{e\in \mathcal{C}_{CP}\}$ and $U_{e_1\wedge e_2}$ are independent, therefore: 
 $$\mathbb{P}\left(e_1, e_2\in \mathcal C_{CP}(\varrho)|e \in \mathcal C_{CP}(\varrho)\right)=A+B+C+D,$$
 where
 \begin{equation}
 A=\mathbb{P}\left(e_1, e_2\in \mathcal C_{CP}(\varrho)\Big{|}e \in \mathcal C_{CP}(\varrho),\, U_{e^+}<\frac{1}{1+\partial(e^+)\lambda_1}\right)\mathbb{P}\left(U_{e^+}<\frac{1}{1+\partial(e^+)\lambda_1}\right)
 \end{equation}
 \begin{equation}
 \begin{split}
B=\mathbb{P}\left(e_1, e_2\in \mathcal C_{CP}(\varrho)\Big{|}e \in \mathcal C_{CP}(\varrho),\, U_{e^+}\in \left[\frac{1+(i_{1}-1)\lambda_1}{1+\partial(e^+)\lambda_1}, \frac{1+i_{1}\lambda_1}{1+\partial(e^+)\lambda_1}\right]\right)\\
\times\mathbb{P}\left( U_{e^+}\in \left[\frac{1+(i_{1}-1)\lambda_1}{1+\partial(e^+)\lambda_1}, \frac{1+i_{1}\lambda_1}{1+\partial(e^+)\lambda_1}\right]\right).
 \end{split}
\end{equation}  
\begin{equation}
 \begin{split}
C=\mathbb{P}\left(e_1, e_2\in \mathcal C_{CP}(\varrho)\Big{|}e \in \mathcal C_{CP}(\varrho),\, U_{e^+}\in \left[\frac{1+(i_{2}-1)\lambda_1}{1+\partial(e^+)\lambda_1}, \frac{1+i_{2}\lambda_1}{1+\partial(e^+)\lambda_1}\right]\right)\\
\times\mathbb{P}\left( U_{e^+}\in \left[\frac{1+(i_{2}-1)\lambda_1}{1+\partial(e^+)\lambda_1}, \frac{1+i_{2}\lambda_1}{1+\partial(e^+)\lambda_1}\right]\right).
 \end{split}
\end{equation}  
 \begin{equation}
 \begin{split}
 D=\mathbb{P}\left(e_1, e_2\in \mathcal C_{CP}(\varrho)\Big{|}e \in \mathcal C_{CP}(\varrho),\, U_{e^+}\in \bigcup_{i\in \{1,\cdots,\partial(e^+)\}\setminus\{i_{1},i_{2}\}}\left[\frac{1+(i-1)\lambda_1}{1+\partial(e^+)\lambda_1}, \frac{1+i\lambda_1}{1+\partial(e^+)\lambda_1}\right]\right)\\
\times \mathbb{P}\left( U_{e^+}\in \bigcup_{i\in\{1,\cdots,\partial(v)\} \setminus \{i_{1},i_{2}\}}\left[\frac{1+(i-1)\lambda_1}{1+\partial(e^+)\lambda_1}, \frac{1+i\lambda_1}{1+\partial(e^+)\lambda_1}\right]\right).
 \end{split}
 \end{equation}
 \bigskip
 
 In the same way, for any $j\in\{1,2\}$, we have:
  $$\mathbb{P}\left(e_j\in \mathcal C_{CP}(\varrho)|e \in \mathcal C_{CP}(\varrho)\right)=E_j+F_j+G_j,$$
  where
  \begin{equation}
  \begin{split}
  E_j=\mathbb{P}\left(e_j\in \mathcal C_{CP}(\varrho)\Big{|}e \in \mathcal C_{CP}(\varrho),\, U_{e^+}<\frac{1}{1+\partial(e^+)\lambda_1}\right)\mathbb{P}\left(U_{e^+}<\frac{1}{1+\partial(e^+)\lambda_1}\right)
  \end{split}
  \end{equation}
  \begin{equation}
  \begin{split}
  F_j=\mathbb{P}\left(e_j\in \mathcal C_{CP}(\varrho)\Big{|}e \in \mathcal C_{CP}(\varrho),\, U_{e^+}\in \left[\frac{1+(i_{j}-1)\lambda_1}{1+\partial(e^+)\lambda_1}, \frac{1+i_{j}\lambda_1}{1+\partial(e^+)\lambda_1}\right]\right)\\
  \times \mathbb{P}\left( U_{e^+}\in \left[\frac{1+(i_{j}-1)\lambda_1}{1+\partial(e^+)\lambda_1}, \frac{1+i_{j}\lambda_1}{1+\partial(e^+)\lambda_1}\right]\right) 
  \end{split}
  \end{equation}
  \begin{equation}
  \begin{split}
 G_j=\mathbb{P}\left(e_j\in \mathcal C_{CP}(\varrho)\Big {|}e \in \mathcal C_{CP}(\varrho),\, U_{e^+}\in \bigcup_{i\in \{1,\cdots,\partial(e^+)\}\setminus\{i_{j}\}}\left[\frac{1+(i-1)\lambda_1}{1+\partial(e^+)\lambda_1}, \frac{1+i\lambda_1}{1+\partial(e^+)\lambda_1}\right]\right)\\
  \times \mathbb{P}\left( U_{e^+}\in \bigcup_{i\in\{1,\cdots,\partial(e^+)\} \setminus \{i_{j}\}}\left[\frac{1+(i-1)\lambda_1}{1+\partial(e^+)\lambda_1}, \frac{1+i\lambda_1}{1+\partial(e^+)\lambda_1}\right]\right).
  \end{split}
  \end{equation}
 
\begin{lemma}
\label{lem:quasiindependent}
There exists four constants $(\alpha_1, \alpha_2, \alpha_3, \alpha)$ depend on $\mathcal{T}$, $\lambda$ and $\lambda_1$ such that:
\begin{equation}
    \label{equa:quasiind1}
A\leq \alpha_1E_1E_2.
\end{equation}

\begin{equation}
    \label{equa:quasiind2}
B\leq \alpha_2F_1E_2.
\end{equation}

\begin{equation}
    \label{equa:quasiind3}
C\leq \alpha_3F_2E_1.
\end{equation}

\begin{equation}
    \label{equa:quasiind4}
D\leq \alpha_4G_1G_2.
\end{equation}
\end{lemma}

We deduce from Lemma~\ref{lem:quasiindependent} that 
$$A+B+C+D\leq \alpha(E_1+F_1+G_1)(E_2+F_2+G_2),$$
where $\alpha=\max_{i\in\{1,2,3,4\}}\alpha_i$. The latter inequality  concludes the proof of Proposition. 
\end{proof}

\begin{proof}[\textbf{Proof of Lemma~\ref{lem:quasiindependent}}]
Now, we will adapt the argument from the proof of Lemma 12 in \cite{collevecchio2017branching}. We prove that there exists $\alpha_1$ such that $A\leq \alpha_1E_1E_2$ and we use the same argument for the other inequalities.\\

First, by using condition \ref{condition1}, note that,
$$\mathbb{P}\left(U_{e^+}<\frac{1}{1+\partial(e^+)\lambda_1}\right)=\frac{1}{1+\partial(e^+)\lambda_1}\geq \frac{1}{1+M\lambda_1}, \text{ we then obtain: }$$

\begin{equation}
    \label{equa:c}
    \mathbb{P}\left(U_{e^+}<\frac{1}{1+\partial(e^+)\lambda_1}\right)\leq (1+M\lambda_1) \left[\mathbb{P}\left(U_{e^+}<\frac{1}{1+\partial(e^+)\lambda_1}\right)\right]^2.
\end{equation}
On the event $\left\{e \in \mathcal C_{CP}(\varrho),\, U_{e^+}<\frac{1}{1+\partial(e^+)\lambda_1}\right\}$ we have $X^{(e)}_{T^{(e)}(e^+)+1}=e^-$. We then define $\widetilde{T}^{(e)}(e^+):=\inf\left\{n\geq T^{(e)}(e^+)+1: X^{(e)}_n=e^+\right\}$.  We define the following quantities: 
 \begin{equation}
 \begin{split}
 N(e)&=\left|\left\{\widetilde{T}^{(e)}(e^+)\leq n \leq T^{(e)}(\varrho)\circ \theta_{\widetilde{T}^{(e)}(e^+)}: ({X}^{(e)}_n,{X}^{(e)}_{n+1})=(e^+,e^-)\right\} \right|,\\
 L(e)&=\sum_{j=0}^{N(e)-1} \frac{Y(e^+,e^-,j)}{r(e^+,e^-)},
 \end{split}
 \end{equation}
 where $|A|$ denotes the cardinality of a set $A$ and $\theta$ is the canonical shift on trajectories. Note that $L(e)$ is the time consumed by the clocks attached to the oriented edge $(e^+,e^{-})$ before ${\bf X}^{\ssup{e}}$, ${X}^{\ssup{e_1}}$ or ${X}^{\ssup{e_2}}$ goes back to $ {\r}$ once it has returned $e^+$ after the time $T^{(e)}(e^+)$. Recall that these three extensions are coupled and thus the time $L(e)$ is the same for the three of them.\\
 For $i\in \{1,2\}$, recall that $v_i$ is the vertex which is the offspring of $e^+$ lying the path from $\varrho$ to $e_i$. Note that $v_i$ could be equal to $e^+_i$. We define for $i\in \{1,2\}$: 

 \begin{equation}
 \begin{split}
 N^*(e_i)&=\left|\left\{\widetilde{T}^{(e)}(e^+)\leq n \leq T^{(e_i)}(e_i^+): ({X}^{[e^+, e_i^+]}_n,{X}^{[e^+,e_i^+]}_{n+1})=(e^+,v_i)\right\} \right|,\\
L^*(e_i)&=\sum_{j=0}^{N^*(e_i)-1}\frac{Y(e^+,e^-,j)}{r(e^+,e^-)}.
\end{split}
\end{equation}
Here, $L^*(e_i)$, $i\in\{1,2\}$, is the time consumed by the clocks attached to the oriented edge $(e^+,v_i)$ before ${\bf X}^{\ssup{e_i}}$, or ${\bf X}^{[e^+,e_i^+]}$,   hits $e_i^+$.\\
Notice  that the three quantities $L(e)$, $L^*(e_1)$ and $L^*(e_2)$ are independent, and we also have: 

\begin{equation}
\begin{split}
\mathbb{P}\left(e_1, e_2\in \mathcal C_{CP}(\varrho)\Big{|}e \in \mathcal C_{CP}(\varrho),\, U_{e^+}<\frac{1}{1+\partial(e^+)\lambda_1}\right)\\=\psi(e,\lambda)\mathbb{P}\left(L(e)>L^*(e_1)\vee L^*(e_2)\right).
\end{split}
\end{equation}

\begin{equation}
\begin{split}
\mathbb{P}\left(e_1\in \mathcal C_{CP}(\varrho)\Big{|}e \in \mathcal C_{CP}(\varrho),\, U_{e^+}<\frac{1}{1+\partial(e^+)\lambda_1}\right)=\psi(e,\lambda)\mathbb{P}\left(L(e)>L^*(e_1)\right).
\end{split}
\end{equation}

\begin{equation}
\begin{split}
\mathbb{P}\left(e_2\in \mathcal C_{CP}(\varrho)\Big{|}e \in \mathcal C_{CP}(\varrho),\, U_{e^+}<\frac{1}{1+\partial(e^+)\lambda_1}\right)=\psi(e,\lambda)\mathbb{P}\left(L(e)> L^*(e_2)\right).
\end{split}
\end{equation}

Now, the random variable $N(e)$ is simply a geometric random variable (counting the number of trials) with success probability $\lambda^{1-|e|}/\sum_{g\le e}\lambda^{1-|g|}$. 
The random variable $N(e)$ is independent of the family $Y(e^+,e^-,\cdot)$. As $Y(e^+,e^-,j)$ are independent exponential random variable for $j\ge0$, we then have that $L(e)$ is an exponential random variables with parameter 
 \begin{equation}\label{paramp}
 p:=\frac{\lambda^{1-|e|}}{\sum_{g\le e}\lambda^{1-|g|}}\times\lambda^{|e|-1}=\frac{1}{\sum_{g\le e}\lambda^{1-|g|}}.
 \end{equation}
 A priori, $L^{*}(e_1)$ and $L^{*}(e_2)$ are not exponential random variable, but they have a continuous distribution. Denote $f_1$ and $f_2$ respectively the densities of $L^{*}(e_1)$ and $L^{*}(e_2)$. Then, we have that
 \begin{equation}
 \begin{split}
\mathbb{P}\left(L(e)>L^*(e_1)\vee L^*(e_2)\right)&=\int_{0}^{+\infty}\int_{0}^{+\infty}\int_{x_1\vee x_2}^{+\infty}p\, e^{-pt}f_1(x_1)f_2(x_2)dtdx_1dx_2\\
&= \int_{0}^{+\infty}\int_{0}^{+\infty} e^{{-p}(x_1\vee x_2)}f_1(x_1)f_2(x_2)dx_1dx_2.\\
&\leq \int_{0}^{+\infty}\int_{0}^{+\infty} e^{\frac{-p}{2}(x_1+x_2)}f_1(x_1)f_2(x_2)dx_1dx_2.
 \end{split}
 \end{equation}
Thus, one can write
 \begin{equation}\label{qindep}
 \begin{split}
& \mathbb{P}\left(L(e)>L^*(e_1)\vee L^*(e_2)\right)\\
&\leq \left (\int_{0}^{+\infty} e^{-px_1/2}f_1(x_1)dx_1\right )\cdot\left (\int_{0}^{+\infty} e^{-px_2/2}f_2(x_2)dx_2\right ).
\end{split}
 \end{equation} 
 Note that: 
 \begin{equation}
 \int_{0}^{+\infty} e^{-px_1/2}f_1(x_1)dx_1=\mathbb{P}\left(\widetilde{L}(e)>L^*(e_1)\right),
 \end{equation}
 where $\widetilde{L}(e)$ is an exponential variable with parameter $p/2$. Note that, in view of \eqref{paramp}, $\widetilde{L}(e)$ has the same law as $L(e)$ when we replace the weight of an edge $g'$ by $\lambda^{-|g'|+1}/2$ for $g'\leq e$ only, and keep the other weights the same. \\
 For simplicity, for any $g\in E$, we set $w(g)=\lambda^{|g|-1}$. For $g\in E$ such that $e<g$, define the functions $\widetilde{\psi}$ and $\widetilde{\phi}$ in a similar way as $\psi$ and $\phi$, except that we replace the weight of an edge $g'$ by $\lambda^{-|g'|+1}/2$ for $g'\leq e$ only, and keep the other weights the same, that is, for $g\in E$, $e<g$,
 
\begin{align}
 \widetilde{\psi}(g,\lambda)=\frac{\sum_{g'<g}w(g')^{-1}+\sum_{g'\leq e}w(g')^{-1}}{\sum_{g'\leq g}w(g')^{-1}+\sum_{g'\leq e}w(g')^{-1}}.\\
  \widetilde{\phi}(g,\lambda_1,\lambda)= \frac{\lambda_1}{1+\partial(g^-)\lambda_1}+\frac{1}{1+\partial(g^-)\lambda_1}\widetilde{\psi}(g,\lambda)\widetilde{\psi}(g^{-1},\lambda)+\frac{(\partial(g^-)-1)\lambda_1}{1+\partial(g^-)\lambda_1}\widetilde{\psi}(g,\lambda).
\end{align}
We obtain: 
\begin{equation}
\begin{split}
&\mathbb{P}(\widetilde{L}(e)>L^*(e_1))=\prod_{e<g\leq e_1}\widetilde{\phi}(g, \lambda_1, \lambda)=\prod_{e<g\leq e_1}\phi(g, \lambda_1, \lambda)\prod_{e<g\leq e_1}\left(\frac{\widetilde{\phi}(g, \lambda_1, \lambda)}{\phi(g, \lambda_1, \lambda)}\right)\\
&=\mathbb{P}(L(e)>L^*(e_1))\times \prod_{e<g\leq e_1}\left(\frac{ \lambda_1+\widetilde{\psi}(g,\lambda)\widetilde{\psi}(g^{-1},\lambda)+(\partial(g^{-1})-1)\lambda_1\widetilde{\psi}(g,\lambda)}{\lambda_1+\psi(g,\lambda)\psi(g^{-1},\lambda)+(\partial(g^{-1})-1)\lambda_1\psi(g,\lambda)}\right)\\
&=\mathbb{P}(L(e)>L^*(e_1))\\
&\qquad\times \prod_{e<g\leq e_1}\left(1+\frac{ \widetilde{\psi}(g,\lambda)\widetilde{\psi}(g^{-1},\lambda)-\psi(g,\lambda)\psi(g^{-1},\lambda)+(\partial(g^{-1})-1)\lambda_1(\widetilde{\psi}(g,\lambda)-\psi(g,\lambda))}{\lambda_1+\psi(g,\lambda)\psi(g^{-1},\lambda)+(\partial(g^{-1})-1)\lambda_1\psi(g,\lambda)}\right).
\end{split}
\end{equation}
 
Now, we compute the product: 
$$ \prod_{e<g\leq e_1}\left(1+\frac{ \widetilde{\psi}(g,\lambda)\widetilde{\psi}(g^{-1},\lambda)-\psi(g,\lambda)\psi(g^{-1},\lambda)+(\partial(g^{-1})-1)\lambda_1(\widetilde{\psi}(g,\lambda)-\psi(g,\lambda))}{\lambda_1+\psi(g,\lambda)\psi(g^{-1},\lambda)+(\partial(g^{-1})-1)\lambda_1\psi(g,\lambda)}\right)$$

$$\leq  \prod_{e<g\leq e_1}\left(1+\frac{ \widetilde{\psi}(g,\lambda)\widetilde{\psi}(g^{-1},\lambda)-\psi(g,\lambda)\psi(g^{-1},\lambda)+(\partial(g^{-1})-1)\lambda_1(\widetilde{\psi}(g,\lambda)-\psi(g,\lambda))}{\lambda_1}\right).$$

$$\leq \exp{\left(\frac{1}{\lambda_1}\sum_{e<g\leq e_1}  \left(\widetilde{\psi}(g,\lambda)\widetilde{\psi}(g^{-1},\lambda)-\psi(g,\lambda)\psi(g^{-1},\lambda)+(\partial(g^{-1})-1)\lambda_1(\widetilde{\psi}(g,\lambda)-\psi(g,\lambda))\right)\right)} $$

\begin{lemma}\label{lem:abcd1}
There exists a constant $c=c(\lambda_1, \lambda)$ which do not depend on $e$, $e_1$ and $e_2$, such that: 
\begin{equation}
\sum_{e<g\leq e_1}\left(\widetilde{\psi}(g,\lambda)-\psi(g,\lambda)\right)\leq c. 
\end{equation}
\end{lemma}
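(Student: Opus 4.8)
I want to bound $\sum_{e<g\le e_1}(\widetilde\psi(g,\lambda)-\psi(g,\lambda))$ by a constant that is uniform in the edges $e$, $e_1$, $e_2$. The key point is that $\widetilde\psi$ and $\psi$ differ only because, in the definition of $\widetilde\psi$, the weights $w(g')^{-1}=\lambda^{-|g'|+1}$ of the edges $g'\le e$ have been halved; equivalently $\widetilde\psi$ carries an extra additive term $\Delta:=\sum_{g'\le e}w(g')^{-1}$ in both numerator and denominator. So I would first write both quantities over the common partial-sum notation $S_k:=\sum_{g'\colon |g'|\le k,\,g'\le g}w(g')^{-1}$, giving
\begin{equation}
\psi(g,\lambda)=\frac{S_{|g|-1}}{S_{|g|}},\qquad
\widetilde\psi(g,\lambda)=\frac{S_{|g|-1}+\Delta}{S_{|g|}+\Delta},
\end{equation}
where $\Delta=\Delta(e)$ does not depend on $g$. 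From here the difference is an elementary algebraic manipulation:
\begin{equation}
\widetilde\psi(g,\lambda)-\psi(g,\lambda)
=\frac{\Delta\,(S_{|g|}-S_{|g|-1})}{S_{|g|}(S_{|g|}+\Delta)}
=\frac{\Delta\, w(g)^{-1}}{S_{|g|}(S_{|g|}+\Delta)}
\le \frac{\Delta\, w(g)^{-1}}{S_{|g|}^{\,2}},
\end{equation}
since $\Delta\ge0$ makes the denominator only larger.

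\textbf{Summing the telescoping/geometric bound.} Now I would exploit the explicit form $w(g')^{-1}=\lambda^{1-|g'|}$ along the ray from $\varrho$ to $e_1$, so that $S_k=\sum_{j=1}^{k}\lambda^{1-j}$ and $w(g)^{-1}=\lambda^{1-|g|}$. The quantity $\Delta=S_{|e|}$ is a fixed partial sum. The ratio $w(g)^{-1}/S_{|g|}^{2}$ is easily controlled: when $\lambda<1$ the sums $S_k$ grow geometrically like $\lambda^{1-k}$, so $w(g)^{-1}/S_{|g|}^{2}\asymp \lambda^{|g|-1}\to0$ geometrically; when $\lambda>1$ the sums $S_k$ converge to a finite limit and $w(g)^{-1}=\lambda^{1-|g|}\to0$ geometrically as well. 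In either case $\sum_{g}\lambda^{1-|g|}/S_{|g|}^{2}$ is bounded by a convergent geometric series whose value depends only on $\lambda$, and the factor $\Delta=S_{|e|}$ is bounded by the full series $\sum_{j\ge1}\lambda^{1-j}$ (finite when $\lambda>1$) or cancels against the growth of $S_{|g|}^2$ (when $\lambda<1$). The cleaner route is to bound $\Delta/S_{|g|}\le 1$ directly (since $\Delta=S_{|e|}\le S_{|g|}$ because $e<g$), leaving
\begin{equation}
\widetilde\psi(g,\lambda)-\psi(g,\lambda)\le \frac{w(g)^{-1}}{S_{|g|}}
=\psi(g,\lambda)\,\frac{w(g)^{-1}}{S_{|g|-1}}\le \frac{\lambda^{1-|g|}}{S_{|g|}},
\end{equation}
and then summing $\sum_{|g|\ge 2}\lambda^{1-|g|}/S_{|g|}$ against the geometric behavior of $S_{|g|}$ to get a constant $c=c(\lambda)$, which also absorbs the $\lambda_1$-dependence trivially since $\lambda_1$ does not enter $\psi$ or $\widetilde\psi$.

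\textbf{Main obstacle.} The genuine difficulty is the case $\lambda<1$, where both $w(g)^{-1}=\lambda^{1-|g|}$ and $S_{|g|}$ blow up, so one must be careful that the ratio is summable rather than treat each factor separately; the estimate $\Delta\le S_{|g|}$ is what tames this, reducing the summand to $\lambda^{1-|g|}/S_{|g|}$, and since $S_{|g|}\ge \lambda^{1-|g|}$ (its last term), this is at most $1$ per edge, which is \emph{not} summable — so the crude bound is too lossy and I must instead keep $S_{|g|}^{2}$ in the denominator and use $S_{|g|}\ge c_\lambda\lambda^{1-|g|}$ to obtain $\lambda^{1-|g|}/S_{|g|}^2\le C_\lambda \lambda^{|g|-1}$, which \emph{is} geometrically summable. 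Thus the delicate point is retaining the square in the denominator (rather than cancelling one factor of $S_{|g|}$ against $\Delta$) precisely in the regime $\lambda<1$; the $\lambda>1$ regime is immediate because everything is bounded. Once summability is secured in both regimes, the constant $c(\lambda_1,\lambda)$ is uniform in $e,e_1,e_2$ as required.
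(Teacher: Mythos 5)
Your algebraic setup is exactly the paper's: the identity
$\widetilde\psi(g,\lambda)-\psi(g,\lambda)=\Delta\, w(g)^{-1}\big/\bigl(S_{|g|}(S_{|g|}+\Delta)\bigr)$
with $\Delta=\sum_{g'\le e}w(g')^{-1}$ is precisely the paper's starting formula, and the subsequent strategy (case analysis according to the position of $\lambda$ relative to $1$, then geometric summation) is also the paper's. But there is a genuine gap: you never treat $\lambda=1$. Your two regimes are $\lambda<1$ and $\lambda>1$ ("in either case\dots"), and every summability claim you make is a geometric-series bound, which is simply false at $\lambda=1$: there $w(g')^{-1}\equiv 1$, $S_k=k$, $\Delta=|e|$, and the summand is $|e|\big/\bigl(|g|(|g|+|e|)\bigr)$, which decays only like $1/|g|^2$ along the path, not geometrically. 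This case is not marginal --- $\lambda=1$ is the branching-ruin regime, i.e.\ the case in which the quasi-independence lemma is actually invoked for part (1) of the main theorem. The paper devotes a separate Case II to it, writing $|e|/(n(n+|e|))=1/n-1/(n+|e|)$ and telescoping to get $\sum_{n=|e|}^{2|e|-1}1/n$, which is bounded (it converges to $\ln 2$) uniformly in $|e|$. Your own ``keep the square'' idea would also work at $\lambda=1$, since $\sum_{n>|e|}|e|/n^2\le 1$, but the proof you wrote does not cover it.

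A second, smaller defect sits in your $\lambda<1$ regime: the final displayed estimate $\lambda^{1-|g|}/S_{|g|}^2\le C_\lambda\lambda^{|g|-1}$ has silently dropped the factor $\Delta$, and ``geometrically summable'' is not by itself enough, because the quantity to be summed is $\Delta\cdot w(g)^{-1}/S_{|g|}^2$ and $\Delta=S_{|e|}\asymp\lambda^{1-|e|}\to\infty$ as $|e|\to\infty$ when $\lambda<1$. Uniformity in $e$ comes from the fact that the sum runs only over $|g|>|e|$, so $\sum_{|g|>|e|}\lambda^{|g|-1}\le\lambda^{|e|}/(1-\lambda)$, and this factor $\lambda^{|e|}$ is what cancels $\Delta\le\lambda^{1-|e|}/(1-\lambda)$, leaving a constant of order $\lambda/(1-\lambda)^2$; equivalently, the per-edge bound should be $C_\lambda\,\lambda^{|g|-|e|}$, which is the paper's Case I estimate. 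You do gesture at this earlier (``$\Delta$ cancels against the growth of $S_{|g|}^2$''), so the idea is present, but as assembled in your last paragraph the bound loses track of $\Delta$ and, read literally, does not give a constant independent of $e$.
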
   
On the other hand, by using Lemma~\ref{lem:abcd1}, for any $e$ and $e_1$ we have that

\begin{equation}\label{equ:abcd2}
\sum_{e<g\leq e_1}\left(\widetilde{\psi}(g,\lambda)\widetilde{\psi}(g^{-1},\lambda)-\psi(g,\lambda)\psi(g^{-1},\lambda)\right)\leq 2c. 
\end{equation}

By using \ref{equ:abcd2}, Lemma~\ref{lem:abcd1} and condition \eqref{condition1}, we obtain:
\begin{equation}
\begin{split}
&\prod_{e<g\leq e_1}\left(1+\frac{ \widetilde{\psi}(g,\lambda)\widetilde{\psi}(g^{-1},\lambda)-\psi(g,\lambda)\psi(g^{-1},\lambda)+(\partial(g^{-1})-1)\lambda_1(\widetilde{\psi}(g,\lambda)-\psi(g,\lambda))}{\lambda_1+\psi(g,\lambda)\psi(g^{-1},\lambda)+(\partial(g^{-1})-1)\lambda_1\psi(g,\lambda)}\right)\\ 
&\leq \exp\left(Mc+\frac{2c}{\lambda_1}\right). 
\end{split}
\end{equation}

We have just proved that
\begin{equation}\label{qindep1}
\int_{0}^{+\infty} e^{-px_1/2}f_1(x_1)dx_1\leq \exp\left(Mc+\frac{2c}{\lambda_1}\right)\times \mathbb{P}(e_1\in \mathcal C_{CP}(\varrho)|e_1\wedge e_2 \in \mathcal C_{CP}(\varrho)).
\end{equation} 
By doing a very similar computation, one can prove that
\begin{equation}\label{qindep2}
\int_{0}^{+\infty} e^{-px_2/2}f_1(x_2)dx_2\leq \exp\left(Mc+\frac{2c}{\lambda_1}\right)\times \mathbb{P}(e_2\in \mathcal C_{CP}(\varrho)|e_1\wedge e_2 \in \mathcal C_{CP}(\varrho)).
\end{equation}

Moreover, we have
\begin{equation}\label{equ:abcdef1}
\psi(e,\lambda)\geq \frac{\lambda}{1+\lambda}.
\end{equation}

The conclusion \eqref{qindep0} follows by using \eqref{equa:c}, \eqref{qindep}, \eqref{equ:abcdef1},  together with \eqref{qindep1} and \eqref{qindep2}.
 \end{proof}

It remains to prove Lemma~\ref{lem:abcd1}.
\begin{proof}[\textbf{Proof of Lemma~\ref{lem:abcd1}}]
By a simple computation, for any $e<g\leq e_1$,
\begin{equation}\label{equ:aabbcc1}
\begin{split}
&\widetilde{\psi}(g,\lambda)-\psi(g,\lambda)=\frac{\left(\sum_{g'\leq e}w(g')^{-1}\right)w(g)^{-1}}{\left(\sum_{g'\leq g}w(g')^{-1}+\sum_{g'\leq e}w(g')^{-1}\right)\left(\sum_{g'\leq g}w(g')^{-1}\right)}.
\end{split}
\end{equation}
We will proceed by distinguishing three cases.\\
\noindent
{\bf Case I: $\lambda < 1$.}\\
By \eqref{equ:aabbcc1}, we have that 
\begin{equation}\label{equ:aabbcc2}
\begin{split}
&\widetilde{\psi}(g,\lambda)-\psi(g,\lambda)=\frac{\left(1-\frac{1}{\lambda^{|e|}}\right)\frac{1}{\lambda^{|g|-1}}}{\left(1-\frac{1}{\lambda^{|g|}}+1-\frac{1}{\lambda^{|e|}}\right)\left(1-\frac{1}{\lambda^{|g|}}\right)}\times \left(1-\frac{1}{\lambda}\right). 
\end{split}
\end{equation}
Hence, there exists a constants $c_1$ such that 
\begin{align}
0\leq \widetilde{\psi}(g,\lambda)-\psi(g,\lambda)\leq c_1 \lambda^{|g|-|e|}.
\end{align}

Therefore we obtain
\begin{equation}
\sum_{e<g\leq e_1}\left(\widetilde{\psi}(g,\lambda)-\psi(g,\lambda)\right)\leq c_1 \sum_{e<g\leq e_1}\lambda^{|g|-|e|}\leq c_1\sum_{i\geq 0}\lambda^i<\infty.
\end{equation}

\noindent

{\bf Case II: $\lambda=1$.}\\
By \eqref{equ:aabbcc1}, we have that 

\begin{equation}
\widetilde{\psi}(g,\lambda)-\psi(g,\lambda)=\frac{|e|}{|g|(|g|+|e|)}.
\end{equation}
Therefore we obtain
\begin{equation}
\label{equ:aabbccdd1}
\begin{split}
&\sum_{e<g\leq e_1}\left(\widetilde{\psi}(g,\lambda)-\psi(g,\lambda)\right)\leq \sum_{n\geq |e|}\left(\frac{|e|}{n(n+|e|)}\right)\leq \sum_{n\geq |e|}\left(\frac{1}{n}-\frac{1}{n+|e|}\right)\\
&\leq \sum_{n=|e|}^{2|e|-1}\frac{1}{n}. 
\end{split}
\end{equation}

On the other hand, we have:

\begin{equation}
\label{equ:aabbccdd2}
\lim_{n\rightarrow \infty}\left(\sum_{k=n}^{2n-1}\frac{1}{k}\right)=\lim_{k\rightarrow \infty}\left(\sum_{k=0}^{n-1}\frac{1}{n+k}\right)=\lim_{k\rightarrow \infty}\left(\frac{1}{n}\sum_{k=0}^{n-1}\frac{1}{1+k/n}\right)=\int_{0}^1\frac{dx}{1+x}.
\end{equation}
We use \eqref{equ:aabbccdd1} and \eqref{equ:aabbccdd2} to obtain the result. 

\noindent

{\bf Case III: $\lambda>1$.}\\
By \eqref{equ:aabbcc1}, we have that 
\begin{equation}\label{equ:aabbcc2}
\begin{split}
&\widetilde{\psi}(g,\lambda)-\psi(g,\lambda)=\frac{\left(1-\frac{1}{\lambda^{|e|}}\right)\frac{1}{\lambda^{|g|-1}}}{\left(1-\frac{1}{\lambda^{|g|}}+1-\frac{1}{\lambda^{|e|}}\right)\left(1-\frac{1}{\lambda^{|g|}}\right)}\times \left(1-\frac{1}{\lambda}\right). 
\end{split}
\end{equation}
Hence, there exists a constants $c_2$ such that 
\begin{align}
0\leq \widetilde{\psi}(g,\lambda)-\psi(g,\lambda)\leq \frac{c_2}{\lambda^{|g|}}. 
\end{align}

Therefore we obtain
\begin{equation}
\sum_{e<g\leq e_1}\left(\widetilde{\psi}(g,\lambda)-\psi(g,\lambda)\right)\leq c_2 \sum_{e<g\leq e_1}\frac{1}{\lambda^{|g|}}\leq c_2\sum_{i\geq 0}\left(\frac{1}{\lambda}\right)^i<\infty.
\end{equation}

\end{proof}

\subsection{Transience in Theorem \ref{maintheorem}: The case $RT(\mathcal{T},{\bf X})>1$}

\begin{proposition}\label{proptrans}
If $RT(\mathcal{T},\X)>1$ and if \eqref{condition1} is satisfied  then  ${\bf X}$ is transient.
\end{proposition}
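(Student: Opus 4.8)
The plan is to prove transience by establishing that the correlated percolation induced by $\mathcal{C}_{CP}(\varrho)$ is supercritical, since by Lemma~\ref{lemma1} we have $\mathbb{P}(T(\varrho)=\infty)=\mathbb{P}(|\mathcal{C}_{CP}(\varrho)|=\infty)$, so it suffices to show that the open cluster of the root is infinite with positive probability. The key structural inputs are already assembled in the excerpt: Lemma~\ref{lemma2} tells us the percolation is quasi-independent with some constant $C_Q$, and Lemma~\ref{lem:lemma10} identifies the marginal $\mathbb{P}(e\in\mathcal{C}_{CP}(\varrho))=\mathbb{P}(T^{(e)}(e^+)<T^{(e)}(\varrho))=\Psi(e,\lambda_1,\lambda)$. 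So the open probability of each edge $e$ is exactly $\Psi(e)$, and the hypothesis $RT(\mathcal{T},{\bf X})>1$ says precisely that there exists $\gamma<1$ with $\inf_{\pi\in\Pi}\sum_{e\in\pi}\Psi(e)^\gamma>0$.

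First I would set up a second-moment / cutset argument adapted to quasi-independent percolation, following the strategy of \cite{collevecchio2017branching}. The standard tool here is a Paley--Zygmund-type estimate applied along cutsets, combined with a max-flow / min-cut (Menger-type) bound. Concretely, for a cutset $\pi$ one considers the number $Z_\pi=\sum_{e\in\pi}\mathbb{1}\{e\in\mathcal{C}_{CP}(\varrho),\ [\varrho,e]\text{ open}\}$ of open edges in $\pi$ connected to the root, estimates $\mathbb{E}[Z_\pi]$ from below and $\mathbb{E}[Z_\pi^2]$ from above using quasi-independence \eqref{qindep0}, and shows the ratio $\mathbb{E}[Z_\pi]^2/\mathbb{E}[Z_\pi^2]$ is bounded below uniformly in $\pi$. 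The quasi-independence is exactly what lets one control the second moment: the pair probability $\mathbb{P}(e_1,e_2\in\mathcal{C}_{CP})$ factorizes up to the constant $C_Q$ once one conditions on the common ancestor $e_1\wedge e_2$ being open, so the cross terms in $\mathbb{E}[Z_\pi^2]$ are dominated by $C_Q$ times a product of marginals weighted by $\Psi(e_1\wedge e_2)$.

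The link to $RT(\mathcal{T},{\bf X})>1$ comes through the following: after applying quasi-independence and summing over the tree structure, the relevant quantity that must stay bounded away from zero is of the form $\inf_\pi\left(\sum_{e\in\pi}\Psi(e)\right)^2 / \sum_{e\in\pi}\Psi(e)$, or more precisely an expression where the effective exponent can be pushed below $1$. Since $RT>1$ gives a $\gamma<1$ with $\inf_\pi\sum_{e\in\pi}\Psi(e)^\gamma>0$, one concludes that the Paley--Zygmund lower bound for the probability that $\pi$ contains an open edge connected to $\varrho$ is uniform in $\pi$. Taking a decreasing sequence of cutsets exhausting the tree and invoking continuity of measure then yields $\mathbb{P}(|\mathcal{C}_{CP}(\varrho)|=\infty)>0$, hence transience.

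I expect the main obstacle to be making the second-moment estimate genuinely uniform over all cutsets while correctly tracking the dependence structure through the common-ancestor decomposition; that is, organizing the sum $\mathbb{E}[Z_\pi^2]=\sum_{e_1,e_2\in\pi}\mathbb{P}(e_1,e_2\in\mathcal{C}_{CP}(\varrho))$ according to the value of $e_1\wedge e_2$ and bounding it by $C_Q\sum_g \Psi(g)^{-1}\big(\sum_{e\geq g,\,e\in\pi}\Psi(e)\big)^2$ or similar, then converting this into a statement controlled by $\sum_{e\in\pi}\Psi(e)^\gamma$. Since the proof structure is explicitly borrowed from \cite{collevecchio2017branching}, I would state the key percolation proposition there as a black box and check only that the two hypotheses it requires---quasi-independence (Lemma~\ref{lemma2}) and the supercriticality condition phrased via $RT>1$---are verified, rather than reproving the Paley--Zygmund machinery from scratch.
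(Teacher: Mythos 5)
Your overall plan coincides with the paper's: the paper proves Proposition~\ref{proptrans} exactly by feeding the three ingredients you list (Lemma~\ref{lemma1}, Lemma~\ref{lemma2}, Lemma~\ref{lem:lemma10}) into the second-moment/flow machinery of Appendix A.2 of \cite{collevecchio2018branching}, itself following \cite{collevecchio2017branching}, so treating that machinery as a black box is legitimate. However, there is a genuine error in how you verify its supercriticality hypothesis. You assert that $RT(\mathcal{T},{\bf X})>1$ ``says precisely that there exists $\gamma<1$ with $\inf_{\pi\in\Pi}\sum_{e\in\pi}\Psi(e)^\gamma>0$''. This is backwards. Since $\Psi(e)\in(0,1]$, the map $\gamma\mapsto\Psi(e)^\gamma$ is nonincreasing, so the set of exponents $\gamma$ for which $\inf_{\pi\in\Pi}\sum_{e\in\pi}\Psi(e)^\gamma>0$ is an interval whose supremum is $RT(\mathcal{T},{\bf X})$; hence $RT(\mathcal{T},{\bf X})>1$ is equivalent to the existence of some $\gamma>1$ with $\inf_{\pi\in\Pi}\sum_{e\in\pi}\Psi(e)^\gamma>0$. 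The condition you use instead (some $\gamma<1$ with positive infimum) only says $RT\geq\gamma$ for that particular $\gamma$, which is compatible with $RT(\mathcal{T},{\bf X})<1$ and hence, by Proposition~\ref{proprec}, with recurrence; no argument resting on that condition alone can prove transience.

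The distinction is not cosmetic, because the surplus $\gamma-1>0$ is precisely what powers the second-moment estimate. In the actual argument one fixes $\gamma\in(1,RT)$, applies max-flow/min-cut with capacities $\Psi(e)^\gamma$ to obtain a flow $\theta$ of positive strength satisfying $\theta(e)\leq\Psi(e)^\gamma$, and runs Paley--Zygmund on the \emph{flow-weighted} count $\sum_{e\in\pi}\theta(e)\Psi(e)^{-1}\mathbf{1}_{\{e\in\mathcal{C}_{CP}(\varrho)\}}$ rather than on your unweighted $Z_\pi$ (plain indicator counts do not in general have a uniformly bounded second-moment ratio; the flow weights are what spread mass evenly over cutsets). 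Quasi-independence (Lemma~\ref{lemma2}) together with the marginals from Lemma~\ref{lem:lemma10} reduces the cross terms to an energy-type sum with kernel comparable to $1/\Psi(e_1\wedge e_2)$, and this energy is finite uniformly over cutsets only because $\theta(g)^2/\Psi(g)\leq\Psi(g)^{2\gamma-1}$ with $2\gamma-1>1$; with $\gamma\leq 1$ the bound diverges. Your heuristic that ``the effective exponent can be pushed below $1$'' points in the wrong direction for the same reason. So keep the structure, but state the black-box hypothesis correctly (existence of $\gamma>1$ with the cutset sums bounded below) and carry out the estimate with flow-weighted counts.
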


\begin{proof}
The proof is now easy, we can follow line by line the Appendix A.2 of  \cite{collevecchio2018branching}.
\end{proof}

\section{Proof of Theorem~\ref{thm:criticaldigging}}
This section is independent with the previous sections. In this section, we prove a criterion which can apply to the critical $M$-digging random walk on superperiodic trees. We will use the Rubin's construction (resp.\ the definition of $\mathcal C(\varrho)$, $\mathcal C_{CP}(\varrho)$) from section 7 (resp.\ section 8.1) of \cite{collevecchio2018branching}. We will allow ourselves to omit these definitions and refer the readers to \cite{collevecchio2018branching} for more details.\\

The main idea for the proof of Theorem~\ref{thm:criticaldigging} is that the number of surviving rays of the percolation $\mathcal C_{CP}(\varrho)$ almost surely is either zero or infinite. This property was proved in the case of \emph{Bernoulli percolation} (see \cite{LP:book} proposition 5.27) or \emph{target percolation} (see \cite{pemantle1995critical}, lemma 4.2). The main difficulty that we have to face is that the FKG inequality is not true for our percolation. \\

\subsection{Some definitions}
Let $\lambda>0$, $M\in \mathbb N$ and $\mathcal{T}$ be an infinite, locally finite and rooted tree. For each $v \in V(\mathcal{T})$,  recall the definition of subtree $\mathcal{T}^v$ of $\mathcal T$ from Section~\ref{sub:notation}. Let ${\bf X}^{v, \lambda}$ be the
  $M$-digging random walk  on $\mathcal{T}^v$. We say that $\mathcal{T}$ is \emph{uniformly transient} if for any $\lambda$ such that the $M$-digging random walk on $\mathcal{T}$ with parameter $\lambda$ is transient (i.e.\ ${\bf X}^{\varrho, \lambda}$ is transient),
  \begin{equation}
  \exists \alpha_{\lambda}>0,\forall  v
  \in    V(\mathcal{T}), \mathbb{P}(\forall     n>0,    X^{v, \lambda}_{n}     \neq    v)\geq
  \alpha_{\lambda}.
  \end{equation}

It is called \emph{weakly uniformly transient} if there exists a sequence of finite pairwise disjoint $\pi_n$ such that
\begin{equation}
\exists \alpha_{\lambda}>0,\forall  v \in\underset{n}{\bigcup}V(\pi_n), \mathbb{P}(\forall     n>0,    X^{v, \lambda}_{n}     \neq    v)\geq
  \alpha_{\lambda}
\end{equation}
where $V(\pi_n)=\{e^-: e\in \pi_n\}$. 

\begin{remark}
\begin{itemize}
\item If $\mathcal{T}$ is uniformly transient, then $\mathcal{T}$ is also weakly uniformly transient, but the reverse is not always true.

\item The superperiodic trees are uniformly transient. 
\end{itemize}

\end{remark}

\bigskip

An infinite  self-avoiding path starting at  $\varrho$ is called a  \emph{ray}. The set of  all rays,  denoted   by  $\partial   \mathcal{T}$,  is   called  the \emph{boundary} of  $\mathcal T$. Let $\phi: \mathbb Z^{+}\rightarrow \mathbb{R}$ be a decreasing positive function with $\phi(n) \rightarrow 0$ as $n \rightarrow \infty$. The \emph{Hausdorff mearsure} of $\mathcal{T}$ in gauge $\phi$ is

 $$\liminf_{\Pi}\sum_{v\in \Pi}\phi(|v|),$$
where the $\liminf$ is taken over $\Pi$ such that the distance from $\varrho$ to the nearest vertex  in  $\Pi$  goes  to  infinity. We say that $\mathcal{T}$ has $\sigma$-finite Hausdorff measure in gauge $\phi$ if $\partial \mathcal{T}$ is the union of countably many subsets with finite Hausdorff measure in gauge $\phi$. 

Finally, If $\lambda$ is such that the $M$-digging random walk $X$ with parameter $\lambda$ on $\mathcal{T}$ is transient, on the event $\{T(\varrho)=\infty\}$, its path determines an infinite branch in $\mathcal{T}$, which can be seen as a random ray $\omega^{\infty}$, and call it the \emph{limit walk} of $X$. Equivalently, on the event $\{T(\varrho)=\infty\}$, we define the limit walk as follows: For any $k\geq 1$,
\begin{equation}
  \omega^{\infty}(k)=v \iff  v \in \mathcal{T}_k  \text{~and~} \exists
  n_0, \forall n>n_0: X_n \in \mathcal{T}^{v}.
\end{equation}

Note that $\mathbb{P}\left(\omega^{\infty}(0)=\varrho\right)=1$. For any $k\geq 1$, we call the $k$-first steps of $\omega^{\infty}$ is $(\omega^{\infty}(0), \cdots, \omega^{\infty}(k))$, denoted by $\omega^{\infty}_{|{[0,n]}}$.

\subsection{Proof of Theorem~\ref{thm:criticaldigging}} $ $

\begin{figure}[h!]
    \centering
    \includegraphics[scale=0.5]{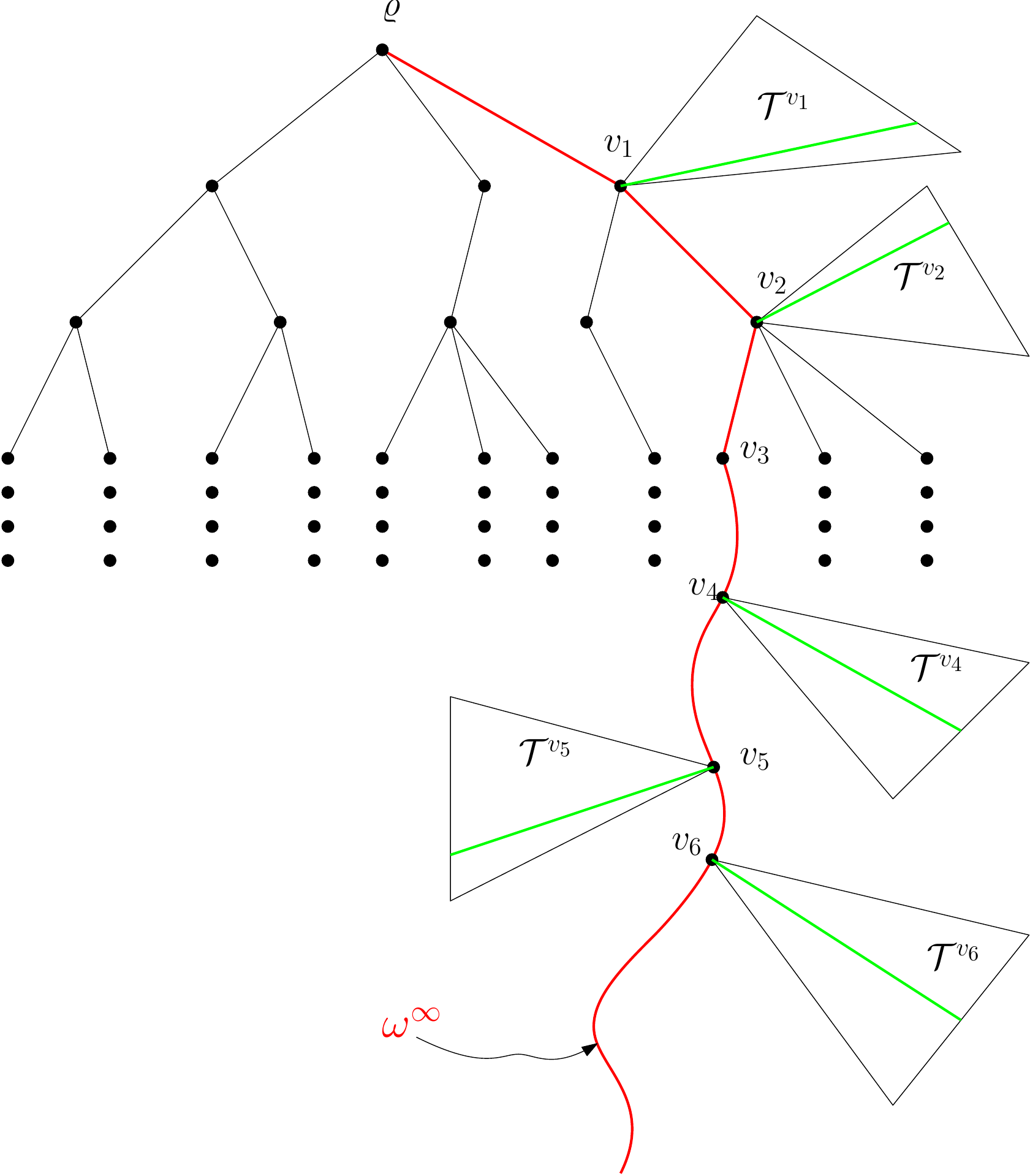} 
    \caption{The proof's idea of Proposition~\ref{pro:criticalcase2}. The limit walk $\omega^{\infty}$ is in red. Conditioning on the event $\{\omega^\infty(0)=\varrho, \omega^\infty(1)=v_1,...,\omega^\infty(6)=v_6\}$ and denote by $\ell$ the last time the critical $M$-digging random walk ${\bf X}$ on $\mathcal T$ visits $v_6$. For each $1\leq i\leq 6$, running the walk ${\bf X}^{v_i, \lambda_c}$ on $\mathcal{T}^{v_i}$. The property of uniformly transient implies that there exists a surviving ray (in green) in $\mathcal{T}^{v_i}$ with probability is larger than a constant which do not depend on $i$.}
    \label{fig:abcdefgh11}
  \end{figure}
We begin with the following proposition:

\begin{proposition}
\label{pro:criticalcase2}
Let $\bf X$ be a $M$-digging random walk with parameter $\lambda_c$ on an uniformly transient tree $\mathcal T$ and recall the definition of $\mathcal{C}_{CP}$ from $\bf X$ as in (\cite{collevecchio2018branching}, Section 7). Consider the percolation induced by $\mathcal{C}_{CP}$ and let $\phi(n)=\mathbb P(\varrho \leftrightarrow v)$ for $v\in \mathcal{T}_n$. 

\begin{enumerate}
    \item Almost surely, the number of surviving rays is either zero or infinite.
    
    \item If $\partial \mathcal{T}$ has $\sigma$-finite Hausdorff measure in the gauge $\{\phi(n)\}$, then $\mathbb P(\varrho \leftrightarrow \infty)=0$. In particular, $\bf X$ is recurrent. 
\end{enumerate}

\end{proposition}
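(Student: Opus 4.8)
The plan is to treat the two assertions in turn: the dichotomy in part (1) will come from combining the limit walk $\omega^{\infty}$, which supplies a single surviving ray, with the uniform transience of $\mathcal{T}$, which lets me graft infinitely many further rays onto it, while part (2) will follow from a first–moment estimate tested against the prescribed Hausdorff gauge. For part (1) I would first note that, by Lemma~\ref{lemma1} and the tree structure, the survival event $\{\varrho \leftrightarrow \infty\}$ for the percolation $\mathcal{C}_{CP}(\varrho)$ agrees up to a null set with $\{T(\varrho)=\infty\}$; on this event the limit walk $\omega^{\infty}=(\varrho=v_0,v_1,v_2,\dots)$ is well defined and is itself one surviving ray. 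Writing $N$ for the number of surviving rays, it then suffices to prove $\mathbb{P}(1\le N<\infty)=0$. Conditioning on the initial segment $\omega^{\infty}_{|[0,k]}=(v_0,\dots,v_k)$, at each $v_i$ I would select a child $w_i\neq v_{i+1}$ and examine the extension ${\bf X}^{w_i,\lambda_c}$ on the subtree $\mathcal{T}^{w_i}$. Since these off–path subtrees are pairwise disjoint, the clock families from Rubin's construction that govern them are independent, and by uniform transience each carries a surviving ray with conditional probability at least the constant $\alpha_{\lambda_c}>0$, which crucially does not depend on $i$.

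With these ingredients the dichotomy follows: the grafted rays branch off $\omega^{\infty}$ at the distinct vertices $v_i$ into disjoint subtrees, so they are distinct from $\omega^{\infty}$ and from one another, and the corresponding events are independent with probabilities bounded below by $\alpha_{\lambda_c}$. A conditional second Borel--Cantelli argument then forces infinitely many of them to occur almost surely on $\{N\ge 1\}$, whence $N=\infty$ there and $\mathbb{P}(1\le N<\infty)=0$.

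For part (2) I would run a first–moment bound. For any cutset $\pi$ every surviving ray crosses $\pi$ at a vertex $v$ with $\varrho\leftrightarrow v$, so $N\le \#\{v\in\pi:\varrho\leftrightarrow v\}$; taking expectations and applying Fatou along a sequence of cutsets receding to infinity yields $\mathbb{E}[N]\le \liminf_{\pi}\sum_{v\in\pi}\mathbb{P}(\varrho\leftrightarrow v)=\liminf_{\pi}\sum_{v\in\pi}\phi(|v|)$. Decomposing $\partial\mathcal{T}=\bigcup_k A_k$ with each $A_k$ of finite Hausdorff measure in the gauge $\{\phi(n)\}$ and applying the same estimate to cutsets separating $A_k$ shows that the number $N_k$ of surviving rays terminating in $A_k$ has finite expectation, hence is finite almost surely. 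If $\mathbb{P}(\varrho\leftrightarrow\infty)>0$, then by part (1) $N=\infty$ with positive probability, and since the rays land in the countably many pieces $A_k$, at least one $A_k$ would receive infinitely many of them with positive probability, contradicting $N_k<\infty$ a.s. Therefore $\mathbb{P}(\varrho\leftrightarrow\infty)=0$, and by Lemma~\ref{lemma1} ${\bf X}$ is recurrent.

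The hard part will be part (1): because FKG fails for $\mathcal{C}_{CP}$, I cannot invoke the usual Bernoulli–percolation zero–infinity law, and the real work lies in organizing the conditioning through Rubin's construction so that the grafting events at distinct $v_i$ are genuinely independent and produce rays distinct from $\omega^{\infty}$. The quantitative fact that rescues the argument is precisely the $i$–independent lower bound $\alpha_{\lambda_c}$ furnished by uniform transience; verifying that the extension ${\bf X}^{w_i,\lambda_c}$ failing to return to $w_i$ indeed certifies an open surviving ray of $\mathcal{C}_{CP}(\varrho)$, and that the whole family remains conditionally independent given $\omega^{\infty}_{|[0,k]}$, is the delicate point.
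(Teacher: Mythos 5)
Your part (1) follows the same strategy as the paper (limit walk plus disjoint off-path subtrees plus uniform transience plus a counting/Borel--Cantelli step), but it has a genuine gap at exactly the point you flag as ``delicate'' and then leave unresolved. Conditionally on $\omega^{\infty}_{|[0,k]}$, the walk has in general already entered the off-path subtrees $\mathcal{T}^{w_i}$ and consumed some of their cookies before escaping along $\omega^{\infty}$; consequently, the process that decides whether $\mathcal{T}^{w_i}$ contains a surviving ray of $\mathcal{C}_{CP}(\varrho)$ is \emph{not} the fresh $M$-digging walk ${\bf X}^{w_i,\lambda_c}$, but a digging walk in an inhomogeneous, history-dependent cookie environment $m^i(v)\le M$. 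Uniform transience, as defined, applies only to the homogeneous $M$-cookie walk, so the uniform lower bound $\alpha_{\lambda_c}$ does not follow directly, and neither does the conditional independence you invoke (the event you condition on depends on the whole trajectory, including its excursions into the $\mathcal{T}^{w_i}$). The paper resolves this by conditioning further on the last visit time $\ell$ to the deepest conditioned vertex, identifying the law of the continuation in each off-path subtree as an inhomogeneous digging walk ${\bf X}^{v_i,m^i,\lambda_c}$, and then transferring the uniform-transience bound to the depleted environment by a monotonicity-in-cookies comparison (Lemma~\ref{lem:comparecookies}). Without that ingredient, the claimed $i$-uniform constant and the independence structure are unjustified.

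Part (2) contains a clean logical error. From ``$N=\infty$ with positive probability'' and ``each $A_k$ contains finitely many surviving rays a.s.''\ you conclude that some $A_k$ receives infinitely many rays; this pigeonhole is false for countably infinite families, since countably infinitely many rays can be spread, say, one per piece, over the countably many sets $A_k$, and no contradiction arises. Part (1) only excludes finite nonzero counts; it does not prevent the set of surviving rays from being countably infinite. The argument the paper points to (Pemantle--Peres, Lemma 4.2(ii)) requires a genuine strengthening: one applies the zero-or-infinity dichotomy inside every subtree $\mathcal{T}^{v}$ (so that a.s.\ no vertex carries a finite nonzero number of surviving rays through it), deduces that the closed set of surviving rays a.s.\ has no isolated points, hence is empty or perfect and therefore empty or uncountable; since the first-moment/$\sigma$-finiteness bound shows this set is a.s.\ a countable union of finite sets, it must be empty a.s. Your first-moment step itself is essentially fine once stated with the liminf over receding cutsets (the per-cutset inequality $N\le\#\{v\in\pi:\varrho\leftrightarrow v\}$ is not literally true, as distinct rays can cross a fixed cutset at the same vertex), but the uncountability upgrade is the missing idea, and without it the conclusion $\mathbb{P}(\varrho\leftrightarrow\infty)=0$ does not follow.
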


The overall strategy for the proof of Proposition~\ref{pro:criticalcase2} is as follows. First, if ${\bf X}$ is recurrent, then the percolation induced by $\mathcal{C}_{CP}$ almost surely have no surviving ray. Next, assume that ${\bf X}$ is transient. On the event $\left\{T(\varrho)=\infty\right\}$, the limit walk $\omega^{\infty}$ is a surviving ray of $\mathcal{C}_{CP}(\varrho)$. Given $n \in \mathbb{N}$ and conditioning on $\omega^{\infty}_{|{[0,n]}}$, by using the Rubin's construction and the definition of uniformly transient, we prove that there exists a surviving ray in $\mathcal{T}^{\omega^{\infty}(i)}$ with probability larger than a constant $c$ which do not depend on $i$ and $\omega ^{\infty}$ (see Figure~\ref{fig:abcdefgh11}). The following basic lemma is necessary:

\begin{lemma}
\label{lem:comparecookies}
Let $\lambda>0$ and $\mathcal{T}$ be an infinite, locally finite and rooted tree. Let $\overline{M}:=(m_v, v\in V(\mathcal T))$ be a family of non-negative integers. Denote by $\bf X$ the $M$-digging random walk with parameter $\lambda$ and $\bf Y$
the $\overline{M}$-digging random walk associated with the inhomogeneous initial number of cookies $\overline{M}$ with parameter $\lambda$ (see \cite{collevecchio2018branching}, section 2.3.2 for more details on the definition of $\overline{M}$-digging random walk). Denote by $T^{\bf X}(\varrho)$ (resp. $T^{\bf Y}(\varrho)$) the return time of $\bf X$ (resp. $\bf Y$) to $\varrho$. Assume that $m(v)\leq M$ for all $v\in V(\mathcal{T})$, we then have
\begin{equation}
\mathbb{P}\left(T^{\bf X}(\varrho)<\infty\right)\leq \mathbb{P}\left(T^{\bf Y}(\varrho)<\infty\right).
\end{equation}
\end{lemma}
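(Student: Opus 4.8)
The plan is to prove Lemma~\ref{lem:comparecookies} by constructing an explicit coupling of $\bf X$ and $\bf Y$ on a common probability space, using the Rubin-type construction from Section~\ref{rubin}. The intuition is clear: $\bf Y$ has (weakly) fewer cookies than $\bf X$ at every vertex, since $m_v\le M$ for all $v$, and fewer cookies means weaker excitation pushing the walker away from the root. Consequently $\bf Y$ should find it harder to escape and thus return to $\varrho$ at least as often as $\bf X$, which is exactly the claimed inequality $\mathbb{P}(T^{\bf X}(\varrho)<\infty)\le\mathbb{P}(T^{\bf Y}(\varrho)<\infty)$. The work is to turn this monotonicity intuition into a pathwise domination.

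The key steps, in order, are as follows. First I would realize both walks on the same collection of exponential clocks $\bf Y$ (the collection from \eqref{defY}) and uniform variables $\bf U$ from \eqref{defU}, so that each transition is driven by the same randomness for both processes. Second, I would argue a stopping-time/inductive comparison: since having an extra cookie at a vertex only ever increases the chance of moving toward a child rather than toward the parent on the excited visits, one can set up a monotone coupling so that whenever the two walks are compared at the same vertex, the walk $\bf Y$ (fewer cookies) is more likely to step back toward $\varrho$. Third, I would track the event $\{T(\varrho)<\infty\}$ through this coupling: on the coupled space, escape of $\bf Y$ to infinity forces escape of $\bf X$ to infinity, equivalently $\{T^{\bf Y}(\varrho)=\infty\}\subseteq\{T^{\bf X}(\varrho)=\infty\}$, which upon taking complements and probabilities yields the stated inequality. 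Since this is precisely the type of cookie-monotonicity argument already used in the literature, I would also note that for the transition probabilities the relevant comparison reduces to the elementary inequality $\tfrac{\lambda_1}{1+\partial\lambda_1}\ge\tfrac{\lambda'_1}{1+\partial\lambda'_1}$ for $\lambda_1\ge\lambda'_1$, i.e.\ the outward bias is monotone in the number of cookies, and likewise the inward (toward-parent) probability is monotone in the opposite direction.

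The main obstacle I expect is constructing a \emph{genuinely pathwise} coupling rather than merely a distributional comparison, because the two walks visit vertices different numbers of times and therefore consume cookies on different schedules; the clocks and uniforms attached to a vertex are used at different ``local times'' by $\bf X$ and $\bf Y$. The cleanest way around this is to compare each walk only to the event of interest, namely return to the root: one shows that on every excursion away from $\varrho$, the probability that $\bf Y$ returns before escaping dominates the corresponding probability for $\bf X$, uniformly over the past. This can be done edge-by-edge using the extension framework and the probabilistic interpretation of the crossing probabilities (as in Lemma~\ref{lem:lemma10}), comparing the analogue of $\Psi(e)$ for the two cookie environments. Since the paper explicitly defers to (\cite{collevecchio2018branching}, section 2.3.2 and the cited comparison results), I would close the argument by invoking the same monotonicity of the $\overline{M}$-digging random walk established there, so that the only new content is checking that $m_v\le M$ for all $v$ places $\bf Y$ below $\bf X$ in the relevant stochastic order.
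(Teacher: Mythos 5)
There is no proof in the paper to compare against: the paper's own ``proof'' of Lemma~\ref{lem:comparecookies} reads ``the proof is simple, therefore it is omitted.'' So your proposal must stand on its own, and it contains a genuine directional error. Your entire plan rests on the claim that ``fewer cookies means weaker excitation pushing the walker away from the root, so $\mathbf{Y}$ finds it harder to escape.'' That is the right intuition for excited random walks with \emph{outward} bias, but it is exactly backwards for the \emph{digging} random walk of this lemma. By the definition in Section~\ref{definitionofmodel}, the $M$-DRW$_\lambda$ is the multi-excited walk with $\mathcal{C}=(0,\dots,0;\lambda)$: on each of the first $M$ visits to a vertex, the walker steps to its \emph{parent} with probability $1/(1+\partial(X_n)\cdot 0)=1$. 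A cookie here is a forced step \emph{toward} the root, so the walk with fewer cookies ($\mathbf{Y}$, since $m_v\le M$) suffers fewer forced backtracks and escapes more easily, not less. For the same reason, your ``elementary inequality'' $\lambda_1/(1+\partial\lambda_1)\ge\lambda_1'/(1+\partial\lambda_1')$ is vacuous in this setting: both walks have all excitation parameters equal to zero, and what differs is the \emph{number} of forced visits, not the size of a bias. A coupling in which $\mathbf{Y}$ is steered toward $\varrho$ at least as often as $\mathbf{X}$ would prove a false monotonicity, so it cannot be constructed.

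In fact the inequality as printed in the lemma appears to be a typo, and your attempt inherits it. If $M\ge 1$, then $\mathbf{X}$ returns to $\varrho$ almost surely: its first step lands on a fresh child, whose cookie forces it straight back, so $\mathbb{P}(T^{\mathbf{X}}(\varrho)<\infty)=1$, and the printed inequality would force $\mathbb{P}(T^{\mathbf{Y}}(\varrho)<\infty)=1$, which fails, for example, when $m_v\equiv 0$ and the $\lambda$-biased walk is transient. The same contradiction follows from the phase transition of \cite{collevecchio2018branching}, where transience of the $M$-DRW$_1$ holds when $br_r(\mathcal{T})>M+1$, a threshold increasing in $M$: more cookies make the walk \emph{more} recurrent. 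The direction actually used in the proof of Proposition~\ref{pro:criticalcase2} is the reverse one, namely $\mathbb{P}\left(T^{\mathbf{Y}}(\varrho)=\infty\right)\ge\mathbb{P}\left(T^{\mathbf{X}}(\varrho)=\infty\right)$, which is what lets the uniform-transience lower bound on escape probabilities transfer from the $M$-DRW to the leftover-cookie walks $\mathbf{X}^{v_i,m^i,\lambda_c}$. A correct argument should therefore show that removing cookies can only increase the escape probability; one natural route is the Rubin/extension framework of \cite{collevecchio2018branching}, comparing edge-crossing probabilities and checking that each factor in the product defining them is nonincreasing in the local cookie number, then invoking the percolation characterization of transience. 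Your excursion-by-excursion scheme could be salvaged along these lines, but only after reversing the direction of every comparison in it.
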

\begin{proof}
The proof is simple, therefore it is omitted. 
\end{proof}

\begin{proof}[Proof of Proposition~\ref{pro:criticalcase2}]
Let $\mathcal A_k$ denote  the  event that exactly $k$ rays survive and assume that 
\begin{equation}
\mathbb P(\mathcal A_k)>0,
\end{equation}
Hence, 

\begin{equation}
\label{equ:prove1}
P(|\mathcal{C}_{CP}(\varrho)|=\infty)>0.
\end{equation}

By \eqref{equ:prove1} and Lemma 22 in \cite{collevecchio2018branching}, we have that:
\begin{equation}
\mathbb P(T(\varrho)=\infty)>0,
\end{equation}
and therefore $\bf X$ is transient. 

On the event $\{T(\varrho)=\infty\}$, the limit walk $\omega^{\infty}$ of $\bf X$ is well defined and it is a surviving ray. Let $n$ be a positive integer and $\gamma:=(\gamma_0=\varrho, \gamma_1=v_1, \cdots, \gamma_n=v_n)$ be a path of length $n$ of $\mathcal{T}$. Denote by $B_{n, \gamma}$ the following event: 
\begin{equation}
\mathcal B_{n, \gamma}:=\{\omega^{\infty}_{|{[0,n]}}=\gamma\}.
\end{equation}

For any $1\leq k \leq n$, define a sub-tree $\mathcal{T}^{v_i}$ of $\mathcal{T}$ in the following way (see Figure~\ref{fig:abcdefgh11}). 

\begin{itemize}
\item The root of $\mathcal{T}^{v_i}$ is the vertex $v_i$. 

\item If $\partial (v_i)<2$ then $\mathcal{T}^{v_i}$ is a tree with a single vertex $v_i$: for example, $\mathcal{T}^{v_3}$ in Figure~\ref{fig:abcdefgh11}. 

\item  If $\partial (\gamma_i)\geq 2$, choose one of its children which is different to $v_{i+1}$, denoted by $v$. We then set:
$$\left\{\begin{matrix}
(\mathcal T^{v_i})_1=\{v\}\\ 
(\mathcal T^{v_i})^{v}=\mathcal T^v
\end{matrix}\right.$$

\end{itemize}

Note that for every pair $(i,j)\in [1,n]^2$, we have $V(\mathcal{T}^{v_i}) \cap V(\mathcal{T}^{v_j})=\varnothing$. 
\bigskip

Now, conditioning on the event $\mathcal B_{n, \gamma}$. Let $\ell$ be the last time ${\bf X}$ visits $v_n$, i.e.\ 

\begin{equation}
\ell:= \sup\{k>0: X_k=v_n\}.
\end{equation}
By the definition of limit walk, $\ell$ is finite on the event $\mathcal B_{n, \gamma}$. For each $i\in [1,n]$ and for all $v\in V(\mathcal{T}^{v_i})$, denote by $m^i(v)$ the remaining number of cookies at $v$ after time $\ell$, i.e.
\begin{equation}
m^i(v):=M- \#\{k\leq \ell: X_k=v\}. 
\end{equation}

By using the extensions introduced in (\cite{collevecchio2018branching}, Section 7), the next steps on the tree $\mathcal{T}^{v_i}$ are given by the digging random walk associated with the inhomogeneous initial number of cookies $(m^i(v), v\in V(\mathcal{T}^{v_i}))$ and the same parameter $\lambda_c$ as $\bf X$, denoted by ${\bf{X}}^{v_i, m^i, \lambda_c}$ (see \cite{collevecchio2018branching}, section 2.3.2 for more details on the definition of ${\bf{X}}^{v_i, m^i, \lambda_c}$).  Denote by $T^{v_i, m^i, \lambda_c}$ the return time of ${\bf{X}}^{v_i, m^i, \lambda_c}$ to the root $v_i$ of  $\mathcal{T}^{v_i}$. By the definition of uniformly transient and Lemma~\ref{lem:comparecookies}, there exists a constant $c>0$ which do not depend on $n$ and $\gamma$ such that for any $i$, 
\begin{equation}
\label{equ:prove2}
\mathbb{P}\left(T^{v_i, m^i, \lambda_c}<\infty\right)>c. 
\end{equation}

On the event $\{T^{v_i, m^i, \lambda_c}<\infty\}$, note that $\mathcal{C}_{CP}$ contains a surviving ray in $\mathcal{T}^{v_i}$. By \eqref{equ:prove2}, we have
\begin{equation}
\label{equ:prove3}
\mathbb{P}(\mathcal A_k| \mathcal B_{n, \gamma})\leq \binom{n}{k}(1-c)^{n-k}
\end{equation}

On the other hand,  we have $\mathcal A_k\subset \bigcup_{\gamma: |\gamma|=n}\mathcal B_{n, \gamma}$, therefore by \eqref{equ:prove3} we obtain:

\begin{equation}
\label{equa:A_1}
\mathbb{P}(\mathcal A_k)=\sum_{\gamma: |\gamma|=n}\mathbb{P}(\mathcal A_k| \mathcal B_{n, \gamma})\times \mathbb{P}(\mathcal B_{n, \gamma})\leq \left(\sum_{i=1}^{k}\binom{n}{i}\right)(1-c)^n \underbrace{\sum_{\gamma: |\gamma|=n}\mathbb{P}(\mathcal B_{n, \gamma})}_{\leq 1}\leq \left(\sum_{i=1}^{k}\binom{n}{i}\right)(1-c)^n.
\end{equation}

Since \ref{equa:A_1} holds for any $n$ then we obtain the following contradiction
\begin{equation}
\mathbb{P}(\mathcal A_k)=0.
\end{equation}

For part (2), the proof is similar to part (ii), Lemma 4.2 in \cite{pemantle1995critical}. 
\end{proof}

In the same method as in the proof of Proposition~\ref{pro:criticalcase2}, we can prove the slightly stronger result (the proof of which we omit):

\begin{proposition}
\label{pro:criticaldiggingcase3}
Let $\bf X$ be a $M$-digging random walk with parameter $\lambda_c$ on a weakly uniformly transient tree $\mathcal T$ and recall the definition of $\mathcal{C}_{CP}$ from $\bf X$ as in (\cite{collevecchio2018branching}, Section 7). Consider the percolation induced by $\mathcal{C}_{CP}$ and let $\phi(n)=\mathbb P(\varrho \leftrightarrow v)$ for $v\in \mathcal{T}_n$. 

\begin{enumerate}
    \item With probability one, the number of surviving rays is either zero or infinite.
    
    \item If $\partial \mathcal{T}$ has $\sigma$-finite Hausdorff measure in the gauge $\{\phi(n)\}$, then $\mathbb P(\varrho \leftrightarrow \infty)=0$. In particular, $\bf X$ is recurrent. 
\end{enumerate}

\end{proposition}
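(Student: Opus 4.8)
The plan is to reproduce the proof of Proposition~\ref{pro:criticalcase2} essentially line by line, the only change being the choice of the sites along the limit ray at which we spawn additional surviving rays. As there, part (2) is deduced from part (1) exactly as in (\cite{pemantle1995critical}, Lemma 4.2), so it suffices to prove (1). If $\mathbf{X}$ is recurrent then $\mathcal{C}_{CP}(\varrho)$ has no surviving ray and there is nothing to prove, so assume $\mathbf{X}$ is transient. Fix a finite $k\ge 1$, let $\mathcal{A}_k$ be the event that exactly $k$ rays survive, and suppose for contradiction that $\mathbb{P}(\mathcal{A}_k)>0$. On $\{T(\varrho)=\infty\}$ the limit walk $\omega^{\infty}$ is a well-defined surviving ray, and we condition on $\mathcal{B}_{n,\gamma}=\{\omega^{\infty}_{|{[0,n]}}=\gamma\}$.

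The key point is to locate, along $\gamma$, enough independent opportunities to create a new ray. Let $(\pi_m)_{m\ge 1}$ be the finite, pairwise disjoint cutsets and let $\alpha:=\alpha_{\lambda_c}>0$ be the escape constant furnished by weak uniform transience, so that every $v\in\bigcup_m V(\pi_m)$ satisfies $\mathbb{P}(\forall n>0,\ X^{v,\lambda_c}_n\ne v)\ge\alpha$. Since each $\pi_m$ is a cutset, the infinite ray $\omega^{\infty}$ crosses it; since the $\pi_m$ are pairwise disjoint, the crossing edges are distinct and hence lie at pairwise distinct depths along $\omega^{\infty}$. Consequently, if $\kappa(n)$ denotes the number of cutsets $\pi_m$ whose crossing edge sits among the first $n$ edges of $\omega^{\infty}$, then $\kappa(n)\to\infty$ almost surely on $\{T(\varrho)=\infty\}$ as $n\to\infty$, and each such crossing occurs at a vertex of some $V(\pi_m)$, carrying the escape bound $\alpha$.

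At each of these crossing vertices we branch into a child off the ray and form a fresh subtree exactly as in Proposition~\ref{pro:criticalcase2}; subtrees attached to distinct crossings are pairwise disjoint, so in Rubin's construction they use disjoint families of clocks and the corresponding survival events are independent. Writing $\ell$ for the last visit of $\mathbf{X}$ to $\gamma_n$ and $m^i$ for the residual cookie profile after time $\ell$, the continuation in each such subtree is an inhomogeneous digging walk with at most $M$ cookies at every vertex; combining the escape bound $\alpha$ at the crossing vertex with Lemma~\ref{lem:comparecookies} and the uniform degree bound \eqref{condition1}, each such subtree independently produces a surviving ray with probability at least some $c>0$ not depending on $n$, $\gamma$, or the site. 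Hence, as in \eqref{equ:prove3} and \eqref{equa:A_1},
\begin{equation}
\mathbb{P}(\mathcal{A}_k\mid \mathcal{B}_{n,\gamma})\le \Big(\sum_{i=1}^{k}\binom{\kappa(n)}{i}\Big)(1-c)^{\kappa(n)},
\end{equation}
and summing over $\gamma$ and letting $n\to\infty$, together with $\kappa(n)\to\infty$ and dominated convergence, forces $\mathbb{P}(\mathcal{A}_k)=0$, the desired contradiction; part (2) then follows as in (\cite{pemantle1995critical}, Lemma 4.2).

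I expect the delicate step to be the passage from ``every $\pi_m$ is crossed'' to ``infinitely many independent spawning sites'': the escape bound $\alpha$ is now available only at cutset vertices, not at all vertices of the ray as in the uniformly transient case, so one must verify both that the number of usable sites diverges --- handled by the distinctness of crossing depths forced by pairwise disjointness and finiteness of the $\pi_m$ --- and that at these sites one may genuinely branch into a ray-disjoint \emph{infinite} subtree carrying an independent copy of the escape event. The latter is the real subtlety, since a crossing vertex need not offer an off-ray infinite branch; one rules this out using transience of $\mathbf{X}$, which prevents $\mathcal{T}$ from degenerating to a single ray beyond some level and thereby guarantees infinitely many crossing sites admitting such a branch.
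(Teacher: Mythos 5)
Your overall architecture --- reduce to part (1), assume transience, condition on $\mathcal{B}_{n,\gamma}$, spawn independent survival events in pairwise disjoint subtrees hanging off the limit ray, then apply a binomial bound with a number of sites that diverges --- is exactly the method of Proposition~\ref{pro:criticalcase2} that the paper invokes, and your counting observation is sound: since each $\pi_m$ is finite and the $\pi_m$ are pairwise disjoint, any ray crosses $\pi_m$ within a bounded number of generations and the crossing edges are distinct, so $\kappa(n)\to\infty$ deterministically. The genuine gap is in the step where you claim that each spawned subtree ``produces a surviving ray with probability at least some $c>0$'' by ``combining the escape bound $\alpha$ at the crossing vertex with Lemma~\ref{lem:comparecookies}''. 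Weak uniform transience provides the bound $\mathbb{P}(\forall n>0,\ X^{v,\lambda_c}_n\neq v)\geq\alpha$ only for $v\in\bigcup_m V(\pi_m)$, and that bound concerns the digging walk on the \emph{full} subtree $\mathcal{T}^v$. At a crossing vertex $u_m=e_m^-$ (with $e_m\in\pi_m$ the crossing edge), the full subtree $\mathcal{T}^{u_m}$ contains the continuation of the limit ray, and the escape probability $\alpha$ may be carried entirely by that direction. What the spawning step actually requires is a uniform lower bound on the escape probability of the \emph{residual} walk inside an off-ray subtree at $u_m$, rooted at a child $w$ different from the ray child. In the proof of Proposition~\ref{pro:criticalcase2} this is precisely where uniform transience is used: the bound is applied at the off-ray child $w$ (for the walk on $\mathcal{T}^w$), which is legitimate there because \emph{every} vertex carries the bound. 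Under weak uniform transience, $w$ has no reason to lie in $\bigcup_m V(\pi_m)$, so no constant $c$ independent of the site is available, and your inequality $\mathbb{P}(\mathcal{A}_k\mid\mathcal{B}_{n,\gamma})\leq \bigl(\sum_{i=1}^{k}\binom{\kappa(n)}{i}\bigr)(1-c)^{\kappa(n)}$ is unjustified.

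Your closing remark shows you sensed a difficulty here, but you identified the wrong one and the proposed fix does not close the gap: even granting that infinitely many crossing vertices admit infinite off-ray branches (and the justification ``transience of $\mathbf{X}$ prevents $\mathcal{T}$ from degenerating to a single ray'' is itself only heuristic --- a bare ray can carry a transient digging walk when $\lambda>1$, and ruling this out at $\lambda_c$ needs an argument), weak uniform transience still says nothing quantitative about those branches, since their roots generically lie outside $\bigcup_m V(\pi_m)$. A repair must produce escape bounds that genuinely apply inside the off-ray subtrees; for instance, one can observe that each $\pi_m$, being a cutset of the whole tree, must also cut every ray passing through an off-ray subtree that branches off above the crossing depth, so such subtrees contain vertices of $V(\pi_m)$ carrying the bound $\alpha$ --- but then one must additionally control, uniformly over sites, the probability that the residual walk reaches such a vertex before returning, and your proposal contains no such control. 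As written, the key uniform constant $c$ is not established, and this is the heart of what distinguishes the weakly uniformly transient case from Proposition~\ref{pro:criticalcase2}.
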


The following corollary is an immediate consequence of Proposition~\ref{pro:criticaldiggingcase3}. 

\begin{corollary}\label{cor:reccrit}
Let $M\in \mathbb{N}$ and $\mathcal{T}$ be a weakly uniformly transient tree such that $\partial \mathcal T$ has $\sigma$-finite Hausdorff measure in the gauge $\{\phi(n)\}=\left(\frac{1}{br(\mathcal{T})}\right)^n$ if $br(\mathcal{T})>1$ and $\{\phi(n)\}=\frac{1}{n^{M+1}}$ if $br(\mathcal{T})=1$. Then the critical $M$-digging random walk on $\mathcal{T}$ is recurrent.
\end{corollary}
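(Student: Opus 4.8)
The plan is to deduce the statement directly from Proposition~\ref{pro:criticaldiggingcase3}, whose recurrence conclusion (part (2)) is phrased in terms of the \emph{intrinsic} gauge $\phi(n)=\mathbb{P}(\varrho\leftrightarrow v)$ for $v\in\mathcal{T}_n$, i.e.\ the connectivity function of the percolation induced by $\mathcal{C}_{CP}$. A weakly uniformly transient tree is exactly the hypothesis of that proposition, so it suffices to show that the assumed $\sigma$-finiteness of $\partial\mathcal{T}$ in the \emph{explicit} gauge $\phi_{\mathrm{ex}}(n):=(1/br(\mathcal{T}))^{n}$ (when $br(\mathcal{T})>1$), respectively $\phi_{\mathrm{ex}}(n):=n^{-(M+1)}$ (when $br(\mathcal{T})=1$), forces $\sigma$-finiteness in the intrinsic gauge $\phi$.

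First I would record the elementary monotonicity of Hausdorff measures under domination of gauges: if $\phi(n)\le C\,\phi_{\mathrm{ex}}(n)$ for all $n$ and some finite constant $C$, then for every $E\subseteq\partial\mathcal{T}$ the $\phi$-Hausdorff measure of $E$ is at most $C$ times its $\phi_{\mathrm{ex}}$-Hausdorff measure, since $\liminf_{\Pi}\sum_{v\in\Pi}\phi(|v|)\le C\liminf_{\Pi}\sum_{v\in\Pi}\phi_{\mathrm{ex}}(|v|)$ over the relevant cutsets. Consequently every subset of finite $\phi_{\mathrm{ex}}$-measure has finite $\phi$-measure, and $\sigma$-finiteness in $\phi_{\mathrm{ex}}$ passes to $\sigma$-finiteness in $\phi$. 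This reduces the corollary to the single connectivity estimate $\mathbb{P}(\varrho\leftrightarrow v)\le C\,\phi_{\mathrm{ex}}(|v|)$, valid for all $v$ with a constant independent of $v$.

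The remaining and main step is this estimate at criticality. As in Section~\ref{rubin} and the proof of Lemma~\ref{lem:lemma10}, the event $\{\varrho\leftrightarrow v\}$ in $\mathcal{C}_{CP}$ coincides with $\{T^{(e_v)}(v)<T^{(e_v)}(\varrho)\}$ for the extension along the ray $[\varrho,v]$, so that $\mathbb{P}(\varrho\leftrightarrow v)$ is a crossing probability that factorizes as a product $\prod_{g\le e_v}\phi^{\mathrm{dig}}(g)$ of single-edge crossing probabilities for the critical $M$-digging walk, as computed in \cite{collevecchio2018branching}. I would then use the identification of the critical parameter from \cite{collevecchio2018branching}: when $br(\mathcal{T})>1$ one has $\lambda_c=1/br(\mathcal{T})$, and the superperiodic bound $|\mathcal{T}_n|\le br(\mathcal{T})^{n+N}$ of Theorem~\ref{sousperiodic} combined with the product formula yields $\mathbb{P}(\varrho\leftrightarrow v)\le C\,(1/br(\mathcal{T}))^{|v|}$, the digging contributing only a non-increasing correction that cannot violate the exponential gauge. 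When $br(\mathcal{T})=1$ the relevant bias is $\lambda_c=1$, and crossing the edge into $v$ requires first exhausting the $M$ cookies at $v^{-1}$, each such visit forcing a retreat, before a successful descent; this reproduces edge by edge the $\mathbb{Z}_+$ heuristic in which the escape probability to level $n$ of the $M$-digging walk decays like $n^{-(M+1)}$, giving $\mathbb{P}(\varrho\leftrightarrow v)\le C\,|v|^{-(M+1)}$.

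I expect the connectivity estimate, and in particular the matching of the polynomial exponent $M+1$ in the $br(\mathcal{T})=1$ regime, to be the main obstacle: one must control the product $\prod_{g\le e_v}\phi^{\mathrm{dig}}(g)$ uniformly in $v$ and verify that the $M$-fold digging contributes exactly one extra power per cookie rather than a merely bounded factor. Granting this estimate, the monotonicity of Hausdorff measures together with Proposition~\ref{pro:criticaldiggingcase3}(2) gives $\mathbb{P}(\varrho\leftrightarrow\infty)=0$, so the critical $M$-digging random walk on $\mathcal{T}$ is recurrent, which completes the proof.
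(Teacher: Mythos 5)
Your scaffolding is exactly what the paper leaves implicit (its entire proof of this corollary is the sentence that it is ``an immediate consequence of Proposition~\ref{pro:criticaldiggingcase3}''): dominate the intrinsic gauge $\phi(n)=\mathbb{P}(\varrho\leftrightarrow v)$ by the explicit gauge, use monotonicity of Hausdorff measure under domination of gauges, and apply part (2) of the proposition. That reduction, and the identification of the connectivity estimate $\mathbb{P}(\varrho\leftrightarrow v)\le C\,\phi_{\mathrm{ex}}(|v|)$ as the real content, are correct.

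The gap is in your proof of that estimate when $br(\mathcal{T})>1$: the critical parameter of the $M$-digging random walk is \emph{not} $\lambda_c=1/br(\mathcal{T})$ but $\lambda_c=br(\mathcal{T})^{-1/(M+1)}$. You can read this off the paper's own formulas: setting $\lambda_1=0$ in \eqref{equaofalpha}--\eqref{equaofbeta} (so that the OERW becomes the $1$-digging walk) gives $\alpha=\beta=\lambda^2$, hence on a $b$-ary tree transience holds iff $\lambda^2>1/b$, i.e.\ $\lambda_c=b^{-1/2}$, consistent with Corollary 1.6 of \cite{basdevant2009recurrence}; in general each of the $M$ cookies contributes an asymptotic factor $\lambda$ per edge to the crossing probability, so the per-edge decay is $\lambda^{M+1}$ and $\lambda_c=br(\mathcal{T})^{-1/(M+1)}$. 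Consequently your justification --- that the biased-walk part already yields $(1/br(\mathcal{T}))^{|v|}$ and ``the digging contributes only a non-increasing correction'' --- is backwards: at the true $\lambda_c$ the biased-walk factors alone decay only like $br(\mathcal{T})^{-|v|/(M+1)}$, which is far larger than the target gauge, and it is precisely the digging factors (one factor $\approx\lambda_c$ per cookie per edge) that bring the product of single-edge crossing probabilities down to $(1/br(\mathcal{T}))^{|v|}$. The estimate is in fact true (each finite-level gambler's-ruin factor is bounded by its limit $\lambda_c$, so the product is at most $\lambda_c^{(M+1)|v|}=(1/br(\mathcal{T}))^{|v|}$ up to a convergent correction), but it must be extracted from the actual product formula for the digging walk, not by discarding the digging. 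Two smaller points: the bound $|\mathcal{T}_n|\le gr(\mathcal{T})^{n+N}$ of Theorem~\ref{sousperiodic} is irrelevant to the connectivity estimate (it controls level sizes, not crossing probabilities along a single ray, and superperiodicity is not even assumed in this corollary --- in the paper it enters only afterwards, to deduce Theorem~\ref{thm:criticaldigging}); and in the case $br(\mathcal{T})=1$ your assertion $\lambda_c=1$ also needs an argument (recurrence for $\lambda<1$ by domination by the recurrent $\lambda$-biased walk, and transience for $\lambda>1$).
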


\begin{proposition}
Let $M\in \mathbb{N}^*$ and $\mathcal{T}$ be a superperiodic tree whose upper-growth rate is finite. The critical $M$-digging random walk on $\mathcal{T}$ is recurrent. 
\end{proposition}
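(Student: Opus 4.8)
The plan is to deduce this proposition directly from Corollary~\ref{cor:reccrit}, verifying its two hypotheses---weak uniform transience, and $\sigma$-finiteness of the Hausdorff measure of $\partial\mathcal{T}$ in the prescribed gauge---for a superperiodic tree of finite upper-growth rate. The first hypothesis is immediate: by the Remark following the definition of uniform transience, every superperiodic tree is uniformly transient, and uniform transience implies weak uniform transience. So the weak-uniform-transience assumption of Corollary~\ref{cor:reccrit} holds with no extra work, and all the substance lies in the Hausdorff-measure condition.

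For that condition I would invoke Theorem~\ref{sousperiodic}. Since $\mathcal{T}$ is $N$-superperiodic for some $N\geq 0$ and $\overline{gr}(\mathcal{T})<\infty$, that theorem gives that the growth rate exists, that $gr(\mathcal{T})=br(\mathcal{T})$, and---crucially---that $|\mathcal{T}_n|\leq gr(\mathcal{T})^{n+N}$ for every $n$. I would then split into the two cases of the corollary, in each case testing the Hausdorff measure on the sequence of cutsets $\Pi_n=\mathcal{T}_n$, whose minimal distance to $\varrho$ tends to infinity. When $br(\mathcal{T})>1$ the gauge is $\phi(n)=(1/br(\mathcal{T}))^n$, and
\begin{equation}
\sum_{v\in\mathcal{T}_n}\phi(|v|)=|\mathcal{T}_n|\,\big(br(\mathcal{T})\big)^{-n}\leq br(\mathcal{T})^{n+N}\,br(\mathcal{T})^{-n}=br(\mathcal{T})^{N},
\end{equation}
a bound independent of $n$; hence the $\liminf$ defining the Hausdorff measure is finite, in particular $\sigma$-finite. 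When $br(\mathcal{T})=1$, Theorem~\ref{sousperiodic} forces $gr(\mathcal{T})=1$ and thus $|\mathcal{T}_n|\leq 1$ for all $n$, so that, $\mathcal{T}$ being infinite and locally finite, it is a single ray; the gauge is then $\phi(n)=1/n^{M+1}$ and testing again on $\Pi_n=\mathcal{T}_n$ gives $\sum_{v\in\mathcal{T}_n}\phi(|v|)=n^{-(M+1)}\to 0$, so the Hausdorff measure is zero and again $\sigma$-finite.

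With both hypotheses verified, Corollary~\ref{cor:reccrit} yields recurrence of the critical $M$-digging random walk on $\mathcal{T}$, which is the claim. I expect the only delicate point to be the degenerate case $br(\mathcal{T})=1$: one must observe that a finite growth rate equal to $1$ collapses the tree to a single ray, for which the stated gauge $n^{-(M+1)}$ makes the measure vanish. Everything else reduces to feeding the uniform level-size bound of Theorem~\ref{sousperiodic} into the definition of the Hausdorff measure, so no genuinely new estimate is required beyond citing the results already established.
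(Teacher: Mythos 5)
Your proposal is correct and follows exactly the paper's route: the paper's own proof is a one-line citation of Corollary~\ref{cor:reccrit} together with Theorem~\ref{sousperiodic}, which is precisely the combination you carry out. You simply make explicit the details the paper leaves implicit (weak uniform transience via the remark on superperiodic trees, and the bound $\sum_{v\in\mathcal{T}_n}\phi(|v|)\leq br(\mathcal{T})^{N}$ from $|\mathcal{T}_n|\leq gr(\mathcal{T})^{n+N}$, including the degenerate single-ray case $br(\mathcal{T})=1$), all of which is sound.
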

\begin{proof}
This is a consequence of Corollary~\ref{cor:reccrit} and Theorem~\ref{sousperiodic}.
\end{proof}

\begin{remark}
If $M=0$, then $M$-DRW$_{\lambda}$ is the biased random walk with parameter $\lambda$. The recurrence of critical biased random walk on $\mathcal{T}$ is a consequence of Theorem~\ref{sousperiodic} and Nash-Williams criterion (see  \cite{LP:book} or \cite{Nash1959}). 
\end{remark}

\begin{ack}
I am grateful to Daniel Kious and Andrea Collevecchio for precious discussions. I am thankful to Vincent Beffara for his support. 
\end{ack}

\end{document}